\definecolor{asparagus}{rgb}{0.53, 0.66, 0.42}
\definecolor{ballblue}{rgb}{0.13, 0.67, 0.8}
\definecolor{cadmiumgreen}{rgb}{0.0, 0.42, 0.24}
\definecolor{cobalt}{rgb}{0.0, 0.28, 0.67}
\definecolor{darklavender}{rgb}{0.45, 0.31, 0.59}
\definecolor{green(pigment)}{rgb}{0.0, 0.65, 0.31}
\definecolor{myBlue3}{RGB}{60,124,155}
\definecolor{myForestGreen}{RGB}{34,139,34}
\def\hlinewd#1{%
\noalign{\ifnum0=`}\fi\hrule \@height #1 %
\futurelet\reserved@a\@xhline}
\newcommand{\C}{\mathbb{C}}               % komplexe Zahlen
\newcommand{\R}{\mathbb{R}}               % reelle Zahlen
\renewcommand{\Re}{\mathrm{Re}\,}          % Realteil
\renewcommand{\Im}{\mathrm{Im}\,}          % Imaginaerteil
\renewcommand{\i}{\mathrm{i}} % complex number "i"
\newcommand{\ci}{\mathrm{i}} % complex number "i"
\renewcommand{\d}{\,\mathrm{d}}
\newcommand{\abeta}{a_{\beta}}
\newcommand{\Ainvbeta}{\mathcal{A}^{-1}_\beta}
\newcommand{\Abeta}{\mathcal{A}_\beta}
\newcommand\UN{\textup{N}}
\DeclareMathOperator{\diam}{diam}
\newcommand{\orthiu}{(\ci u)^{\perp}}
\newcommand{\LOD}{{\text{\normalfont\tiny LOD}}}
\newcommand{\Vlod}{V_{h}^{\LOD}}
\newcommand{\Vlodell}{V_{h,\ell}^{\LOD}}
\renewcommand{\div}{\mathrm{div} \,}		% div divergence
\newcommand{\Wbot}{W^{\bot_{a_\beta}}}
\newcommand{\bfA}{\boldsymbol{A}}
\newcommand{\Ccoe}{\rho(\kappa)} %C_{\textrm{sol}} (u,\kappa)
\newcommand{\RitzLOD}{ R_{h}^{\LOD}}
\newcommand{\RitzLODperp}{ R_{h}^{\perp,\LOD}}
\newcommand{\RitzLODell}{ R_{h,\ell}^{\LOD}}
\DeclareMathOperator{\identity}{id}
\DeclareMathOperator{\Int}{int}
\DeclareMathOperator{\ol}{ol}
\newcommand{\fine}{\mbox{\tiny\normalfont fine}}
\newcommand{\id}{\identity}
\newcommand{\quotes}[1]{``#1''}
\newcommand{\FEM}{{\text{\normalfont\tiny FEM}}}
\theoremstyle{definition}
\newtheorem{definition}{Definition}[section]
\newtheorem{remark}[definition]{Remark}
\theoremstyle{plain}
\newtheorem{theorem}[definition]{Theorem}
\newtheorem{lemma}[definition]{Lemma}
\newtheorem{proposition}[definition]{Proposition}
\begin{document}

% |   Title   |
%  -----------
\begin{center}
{\Large 
Vortex-capturing multiscale spaces for the Ginzburg--Landau equation\renewcommand{\thefootnote}{\fnsymbol{footnote}}\setcounter{footnote}{0}
 \hspace{-3pt}\footnote{
 The authors acknowledge the support by the German Research Foundation (DFG). M. Blum and P. Henning received funding through the project grant HE 2464/7-1 and C. D\"oding through Germany's Excellence Strategy, project number EXC-2047/1 -- 390685813.}\\[2em]}
\end{center}

%  -------------
% |   Authors and Date  |
%  -------------
\begin{center}
{\large Maria Blum\footnote[1]{Department of Mathematics, Ruhr University Bochum, DE-44801 Bochum, Germany, \\ e-mail: \textcolor{blue}{maria.blum@rub.de}.}},
{\large Christian D\"oding\footnote[2]{Institute for Numerical Simulation, University Bonn, DE-53115 Bonn, Germany, \\ e-mail: \textcolor{blue}{doeding@ins.uni-bonn.de}.}}, 
{\large Patrick Henning\footnote[3]{Department of Mathematics, Ruhr University Bochum, DE-44801 Bochum, Germany, \\ e-mail: \textcolor{blue}{patrick.henning@rub.de}.}}\\[2em]
\end{center}

%  --------------
% |   Abstract   |
%  --------------
\noindent
\begin{center}
\begin{minipage}{0.8\textwidth}
  {\small
    \textbf{Abstract.} This paper considers minimizers of the Ginzburg--Landau energy functional in particular multiscale spaces that are based on finite elements. The spaces are constructed by localized orthogonal decomposition techniques and their usage for solving the Ginzburg--Landau equation was first suggested in [D\"orich, Henning, SINUM 2024]. In this work we further explore their approximation properties and give an analytical explanation for why vortex structures of energy minimizers can be captured more accurately in these spaces. We quantify the necessary mesh resolution in terms of the Ginzburg--Landau parameter $\kappa$ and a stabilization parameter $\beta \ge 0$ that is used in the construction of the multiscale spaces. Furthermore, we analyze how $\kappa$ affects the necessary locality of the multiscale basis functions and we prove that the choice $\beta=0$ yields typically the highest accuracy. Our findings are supported by numerical experiments.}
\end{minipage}
\end{center}

%  ---------------

\section{Introduction}
Materials that conduct electricity with no electrical resistance, so called superconductors, can be modeled by the Ginzburg--Landau equation (GLE) \cite{DuGuPe,Landau:486430}. In its simplest form, the equation seek the order parameter $u : \Omega \rightarrow \C$ such that
\begin{align} 
\label{stationarGLE}	 (\tfrac{\i}{\kappa} \nabla  + \bfA )^{2} \hspace{1pt}u + (|u|^2-1) u &= 0  \qquad \mbox{in } \Omega,\\
\nonumber	 (\tfrac{\i}{\kappa} \nabla u + \bfA \, u) \cdot \mathbf{n} &=0 \qquad \mbox{on } \partial \Omega.
\end{align}
Here, $\Omega \subset \R^d$ (for $d=2,3$) describes a domain that is occupied by the superconducting material, $\bfA : \Omega \rightarrow \R^d$ is a magnetic vector potential and $\kappa$ is a real material parameter which describes the properties of the considered (type-II) superconductor, cf. \cite{PhysRev.140.A1169}. Formally, $\kappa$ is defined as the ratio between the magnetic-field penetration depth and the coherence length. For superconductors with a high critical temperature, $\kappa$ typically has a large value. The Ginzburg--Landau equation \eqref{stationarGLE} is obtained from the condition $E^{\prime}(u)=0$ that is fulfilled by minimizers of the total energy $E$ of the system (see \eqref{eq_GLenergy} below). For the considered application, the physical quantity of interest is the density of superconducting charge carriers, so-called Cooper pairs, which is obtained from the order parameter as $|u|$. It can be shown that it naturally holds $0 \le |u| \le 1$ (cf. \cite{DuGuPe}) where $|u(x)|=0$ represents the {\it normal state} without any superconducting electron pairs (locally in $x \in \Omega$) and $|u(x)|=1$ represents a perfectly {\it superconducting state}. Of particular physical interest are superconductors with mixed states where normal and superconducting phases coexist in a lattice of quantized vortices, the so-called Abrikosov lattice \cite{Abr04}. The center of the vortices are in the normal state (i.e. it holds $|u(x)|=0$) and magnetic fields can no longer be expelled here. Mathematically, the appearance of vortices in $|u|$ is crucially triggered by the material parameter $\kappa$, cf. \cite{Aftalion99,Ser99,SaS07,SeS10,SaS12}. If the magnetic potential $\bfA$ is fixed, the number of vortices grows with $\kappa$. At the same time, the diameter of the individual vortices shrinks and they become more \quotes{point-like}. Consequently, the computational complexity for solving the Ginzburg--Landau equation increases significantly in the so-called high-$\kappa$ regime. For that reason it is important to understand and quantify the accuracy of numerical approximations depending on the size of $\kappa$ and to identify approaches that are well-suited for capturing vortex pattern.

For the time-dependent Ginzburg--Landau equation, which models the dynamics of superconductors, there has been intense work on its numerical treatment and we exemplary refer to \cite{Chen97,CheD01,Du94b,Du94,Du97,DuGray96,DuanZhang22,GJX19,GaoS18,Li17,LiZ15,LiZ17} and the references therein.  
On the contrary, error estimates and convergence results for the stationary Ginzburg--Landau equation \eqref{stationarGLE} are rare in the literature. First approximation results for finite element methods were obtained by Du, Gunzburger and Peterson \cite{DuGuPe,DuGP93}, for covolume methods by Du, Nicolaides and Wu \cite{DuNicolaidesWu98} and for finite volume methods by Du and Ju \cite{QuJu05}. The precise influence of $\kappa$ on the approximation results was however not traced in these works. The first $\kappa$-explicit error estimates for finite element (FE) approximations were only obtained recently in \cite{DoeHe23}. One of the major findings was that for linear Lagrange elements, the $H^1$-error behaves asymptotically (in a suitably scaled norm) like $\mathcal{O}(h \kappa)$, where $h$ denotes the mesh size. This suggests the resolution condition $h \lesssim \kappa^{-1}$ for reliable approximations. This condition does not change with higher order FE spaces. However, the error estimates in \cite{DoeHe23} also indicated the presence of a preasymptotic convergence regime, which required $h$ to be significantly smaller than $\kappa^{-1}$ in order to observe the predicted asymptotic rates. In general, using $\mathbb{P}^k$ Lagrange finite elements and assuming sufficient regularity, we can expect the additional constraint $h \lesssim \rho(\kappa)^{-1/k} \kappa^{-1}$, where $\rho(\kappa)$ depends on the smallest eigenvalue of $E^{\prime\prime}(u)$. The precise dependency of $\rho(\kappa)$  on $\kappa$ is unknown, but it seems to grow at least polynomially with $\kappa$. Numerical experiments confirmed the existence of this regime. To reduce the preasymptotic effect, it was suggested in \cite{DoeHe23} to discretize the problem in certain generalized finite element spaces based on Localized Orthogonal Decomposition (LOD). The LOD concept was first introduced in \cite{MaP14} to solve elliptic problems with multiscale coefficients and the corresponding approximation spaces are constructed as localization of $\mathcal{A}^{-1} V_h$, where $\mathcal{A}$ is a suitable linear differentiable operator and $V_h$ a conventional finite element space. Like that, problem-specific information can be encoded in the spaces through $\mathcal{A}$. For further reading on the topic, we refer to the surveys \cite{AHP21Acta,MaPeBook21}. 
The particular construction given in \cite{DoeHe23} for the GLE was based on selecting $\mathcal{A}$ as the linear part of the GLE \eqref{stationarGLE}, that is $( \tfrac{\i}{\kappa} \nabla u + \bfA u , \tfrac{\i}{\kappa} \nabla v + \bfA v)_{L^2(\Omega)}$, but stabilized by an additional contribution of the form $\beta ( u, v )_{L^2(\Omega)}$, where $\beta$ is selected large enough to ensure that the operator becomes elliptic. Even though a preliminary error analysis was given for the arising method, several aspects were left open, in particular, the precise role of $\beta$ and a justification of the method in numerical applications. In fact, even though the analysis in \cite{DoeHe23} required $\beta$ to be large, numerical experiments show that the method performs best for $\beta=0$. Since $\mathcal{A}$ is no longer elliptic in this case, the error analysis has to be crucially revisited. With this, we extend the results of \cite{DoeHe23} with regard to the following aspects: We prove that the LOD space is well-defined for any $\beta \ge 0$; we derive $\kappa$- and $\beta$-explicit error estimates that confirm that $\beta=0$ is the preferable choice; we present a localized approximation of the LOD space and quantify the influence of $\kappa$ on the localization error; and we validate our theoretical findings in corresponding numerical experiments. In particular, we demonstrate that the necessary mesh resolution (relative to the size of $\kappa$) is significantly reduced in the LOD spaces and that vortices are also captured on coarse meshes. A preasymptotic effect, as observed for $\mathbb{P}^1$-FEM, is barely (if at all) visible in LOD spaces. This will be also reflected in the error estimates.\\[0.3em]
{\it Outline.} The paper is structured as follows. The precise analytical setting is presented in Section \ref{section:analytical_setting}. In Section \ref{section:standardFEM} we describe the discretization of the GLE with standard $\mathbb{P}^1$ finite elements and recall corresponding $\kappa$-explicit error estimates. The ideal LOD discretization is established in Section \ref{section:ideal_LOD} together with a comprehensive error analysis. The fully localized approximations are studied in Section \ref{section:localization-LOD-basis} and in Section \ref{section:numerical_experiments} we conclude with numerical experiments to illustrate our theoretical findings.
\section{Analytical setting}
\label{section:analytical_setting}
In this section, we start with describing the precise analytical setting of this paper. 

\subsection{General assumptions and problem formulation}
In the following, we assume for the computational domain that
\begin{enumerate}[label={(A\arabic*)}]
\item \label{A1} $\Omega \subset \mathbb{R}^d$ is a bounded, convex polytope, with $d=2,3$.
\end{enumerate}

We denote by $L^2(\Omega):= L^2(\Omega;\C)$ the space of complex-valued, square integrable functions and consider it as a {\it real} Hilbert space equipped with the {\it real} inner product
$$ 
(v,w)_{L^2(\Omega)}:= \Re \Big( \int_{\Omega} v \, w^* \d x \Big) 
\qquad \mbox{for } v,w \in L^2(\Omega),
$$
where $w^*$ is the complex conjugate of $w$. In a similar way, we define the Sobolev space $H^{1}(\Omega):= H^{1}(\Omega;\mathbb{C})$ to be a real Hilbert space. With this, we consider the Ginzburg--Landau energy functional $E: H^1(\Omega) \rightarrow \R$ defined by
 \begin{equation} 
 E(v) = \frac{1}{2} \int_{\Omega} \left|  \tfrac{\i}{\kappa} \nabla v + \bfA\, v \right|^2 + \frac{1}{2} (|v|^2-1)^2 \, \d x,
 \label{eq_GLenergy}
 \end{equation}
where $\bfA : \Omega \rightarrow \R^d$ denotes a given magnetic potential and $\kappa$ a real material parameter. Compared to the setting in \cite{DoeHe23}, the above energy functional $E$ is scaled with the factor $\kappa^{-2}$ which is more common in the literature. However, both settings are fully equivalent and the results of \cite{DoeHe23} remain valid. For $\bfA$ and $\kappa$ we assume 
\begin{enumerate}[resume,label={(A\arabic*)}]
\item\label{A2} $\bfA \in L^\infty (\Omega,\mathbb{R}^d), \hspace{20pt} \div \bfA = 0 \;\mbox{in}\; \Omega, \hspace{20pt} \bfA \cdot \mathbf{n} = 0 \;\mbox{on}\; \partial \Omega$;
\item\label{A3} $\kappa > 0$. 
\end{enumerate}
It is worth to note that, in practice, the potential $\bfA$ is typically not explicitly given but has to be determined through an additional equation derived from Maxwell's equations, cf. \cite{DuGuPe}.\\[0.5em]
Our goal is now to find the lowest energy states of the system, i.e., minimizers $u \in H^1(\Omega)$ with
\begin{align}
\label{minimizer-energy-def} 
E(u) \,\,= \min_{v \in H^1(\Omega)} E(v).
\end{align}
In view of \eqref{minimizer-energy-def}, it becomes clear why we naturally equipped $H^1(\Omega)$ with a real inner product: Since $E$ maps into $\R$, the differentiability of $E$ is  only ensured in {\it real} Hilbert spaces containing complex-valued functions (cf. \cite{AHP21Jmethod,Begout22} for a general motivation). \\[0.2em]
Of particular interest are minimizers when the material parameter $\kappa$ is large. As described in the introduction, $\kappa$ triggers the appearance of a vortex lattice, where the number of vortices increases with $\kappa$. For that reason, our error analysis will focus on the so-called \quotes{high-$\kappa$ regime} where we trace all $\kappa$-dependencies in our error estimates. For simplicity, we will write $a \lesssim b$ to abbreviate the relation $a \le C\, b$ for a constant $C>0$ that is independent of $\kappa$, the mesh size $h$ and the stabilization parameter $\beta$ (the latter two are to be defined later). To account for different $\kappa$-scalings of the derivatives of a minimizer, we equip the Sobolev spaces $H^1$ and $H^2$ with $\kappa$-weighted norms given by %
\begin{eqnarray}
 \label{scaled_norms} 
\| v \|_{H^1_{\kappa}(\Omega)}&:=& \left(  \| v \|_{L^2(\Omega)}^2  + \tfrac{1}{\kappa^2} \| \nabla v \|_{L^2(\Omega)}^2  \right)^{1/2}
\qquad
\mbox{and}\\
\nonumber \| v \|_{H^2_{\kappa}(\Omega)}&:=& \left(  \| v \|_{L^2(\Omega)}^2  + \tfrac{1}{\kappa^2} \| \nabla v \|_{L^2(\Omega)}^2 + \tfrac{1}{\kappa^4} \| D^2 v \|_{L^2(\Omega)}^2 \right)^{1/2},
\end{eqnarray}
where $D^2 v$ denotes the Hessian of a function $v \in H^2(\Omega)$. 

As first step towards the desired estimates, the following theorem summarizes analytical results regarding the existence of minimizers as well as a quantification of its \quotes{size} with respect to $\kappa$. 
\begin{theorem} \label{estimates_u}
	Assume \ref{A1}-\ref{A3}. Then for each $\kappa$ there exists a global minimizer $u \in H^1(\Omega)$ of $E$, i.e., it holds \eqref{minimizer-energy-def}. %
	 Furthermore, any such minimizer fulfills 
	 \begin{align*}
		\vert u(x) \vert \leq 1 \quad \mbox{ for all }  x \in \Omega
		\qquad\hspace{10pt}
		\mbox{and}\hspace{10pt}
		\qquad \Vert u \Vert_{H^1_\kappa(\Omega)} \lesssim \| u\|_{L^2(\Omega)} \lesssim 1.
	\end{align*}
        Furthermore, we have $u \in H^2(\Omega)$ and the bounds
	 \begin{align*}
	 \| \nabla u \|_{L^4(\Omega)} \lesssim \kappa \qquad \mbox{and} \qquad 
      \Vert u \Vert_{H^2_\kappa(\Omega)} \lesssim 1.
	\end{align*}
\end{theorem}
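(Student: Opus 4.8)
\emph{Strategy.} The plan is to prove the statement in five steps: (i) existence of a minimizer by the direct method of the calculus of variations; (ii) the pointwise bound $|u|\le1$ by truncation; (iii) the scaled $H^1$-estimate by testing the Euler--Lagrange equation with $u$; (iv) $H^2$-regularity together with $\|u\|_{H^2_\kappa(\Omega)}\lesssim1$ by recasting \eqref{stationarGLE} as a Poisson problem and invoking elliptic regularity on convex domains; and (v) the bound $\|\nabla u\|_{L^4(\Omega)}\lesssim\kappa$, which is the least standard part. For (i) I would take a minimizing sequence $(v_n)$. The term $\tfrac12\int_\Omega(|v_n|^2-1)^2\d x\le E(v_n)$ bounds $\|v_n\|_{L^4(\Omega)}$, hence $\|v_n\|_{L^2(\Omega)}$ since $\Omega$ is bounded; then $\|\tfrac{\i}{\kappa}\nabla v_n+\bfA v_n\|_{L^2}^2\le2E(v_n)$ together with $\bfA\in L^\infty$ and the triangle inequality bounds $\|\nabla v_n\|_{L^2}$, so $(v_n)$ is bounded in $H^1(\Omega)$. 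Along a weakly convergent subsequence $v_n\rightharpoonup u$, the gauge term $v\mapsto\|\tfrac{\i}{\kappa}\nabla v+\bfA v\|_{L^2}^2$ is convex and continuous, hence weakly lower semicontinuous, while $v\mapsto\int_\Omega(|v|^2-1)^2\d x$ is continuous along the compact embedding $H^1(\Omega)\hookrightarrow L^4(\Omega)$ (valid for $d\le3$); therefore $E(u)\le\liminf_nE(v_n)$ and $u$ is a global minimizer.

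\emph{Steps (ii) and (iii).} For $|u|\le1$ I would use the classical truncation argument (see \cite{DuGuPe}): replacing $u$ by $w:=\min(|u|,1)\,u/|u|$ (and $w:=0$ where $u=0$) gives a competitor $w\in H^1(\Omega)$ with $E(w)\le E(u)$, because $(t^2-1)^2$ is nonincreasing towards $t=1$ and, writing $u=|u|\mathrm{e}^{\i\theta}$ locally, one has the pointwise splitting $|\tfrac{\i}{\kappa}\nabla u+\bfA u|^2=\tfrac1{\kappa^2}|\nabla|u||^2+|u|^2|\tfrac1\kappa\nabla\theta-\bfA|^2$, in which truncation only decreases $|\nabla|u||$ and $|u|$. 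Minimality forces $|u|\le1$ a.e., and the everywhere-bound follows once $u\in H^2(\Omega)\hookrightarrow C(\overline\Omega)$ is known from step (iv). For (iii), minimality gives $E'(u)=0$, i.e.\ the weak form of \eqref{stationarGLE}; testing it with $v=u$ and using $|u|\le1$ yields $\|\tfrac{\i}{\kappa}\nabla u+\bfA u\|_{L^2}^2=\int_\Omega(1-|u|^2)|u|^2\d x\le\|u\|_{L^2}^2$, and then the triangle inequality with $\bfA\in L^\infty$ gives $\tfrac1\kappa\|\nabla u\|_{L^2}\lesssim\|u\|_{L^2}$. Since $\|u\|_{L^2}\le|\Omega|^{1/2}\lesssim1$, this proves $\|u\|_{H^1_\kappa(\Omega)}\lesssim\|u\|_{L^2(\Omega)}\lesssim1$ and, in particular, $\|\nabla u\|_{L^2}\lesssim\kappa$.

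\emph{Step (iv).} Using $\div\bfA=0$ I would expand the gauge-covariant Laplacian and rewrite \eqref{stationarGLE} as
\begin{align*}
-\tfrac1{\kappa^2}\Delta u=-\tfrac{2\i}{\kappa}\bfA\cdot\nabla u-|\bfA|^2u-(|u|^2-1)u\quad\text{in }\Omega,
\end{align*}
with the homogeneous Neumann condition $\nabla u\cdot\mathbf{n}=0$ on $\partial\Omega$ (here $\bfA\cdot\mathbf{n}=0$ from \ref{A2} is used). Multiplying by $\kappa^2$, the right-hand side is bounded in $L^2(\Omega)$ by $\lesssim\kappa^2$: the first term by $\kappa\|\nabla u\|_{L^2}\lesssim\kappa\cdot\kappa$ from step (iii), the remaining two by $|u|\le1$. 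Since $\Omega$ is a bounded convex polytope, $H^2$-elliptic regularity for the Neumann Laplacian applies and yields $u\in H^2(\Omega)$ with $\|D^2u\|_{L^2}\lesssim\|\Delta u\|_{L^2}+\|\nabla u\|_{L^2}\lesssim\kappa^2$. Hence $\kappa^{-2}\|\nabla u\|_{L^2}^2\lesssim1$ and $\kappa^{-4}\|D^2u\|_{L^2}^2\lesssim1$, so that $\|u\|_{H^2_\kappa(\Omega)}\lesssim1$. Convexity (\ref{A1}) is essential here in order to have full $H^2$-regularity up to the polytope boundary.

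\emph{Step (v): the main obstacle.} Proving $\|\nabla u\|_{L^4(\Omega)}\lesssim\kappa$ with the sharp power of $\kappa$ is the delicate point, since a direct Sobolev estimate only gives $\|\nabla u\|_{L^4}\lesssim\|u\|_{H^2}\lesssim\kappa^2$, and Gagliardo--Nirenberg interpolation still only gives $\lesssim\kappa^{3/2}$ (for $d=2$). Instead I would integrate $\int_\Omega|\partial_ju|^2|\nabla u|^2\d x$ by parts in $x_j$ and sum over $j$ --- which is legitimate because $\nabla u\in L^4(\Omega)$ is already known from $u\in H^2(\Omega)$ for $d\le3$ --- to obtain
\begin{align*}
\|\nabla u\|_{L^4(\Omega)}^4 &= -\int_\Omega(\Delta u)|\nabla u|^2\overline{u}\d x-\int_\Omega\bigl(\nabla u\cdot\nabla|\nabla u|^2\bigr)\overline{u}\d x\\
&\quad+\int_{\partial\Omega}(\nabla u\cdot\mathbf{n})|\nabla u|^2\overline{u}\d S.
\end{align*}
The boundary term vanishes by the Neumann condition; the first volume term is $\le\|u\|_{L^\infty}\|\Delta u\|_{L^2}\|\nabla u\|_{L^4}^2\lesssim\kappa^2\|\nabla u\|_{L^4}^2$; and since $|\nabla|\nabla u|^2|\le2|\nabla u||D^2u|$, the second is $\le2\|u\|_{L^\infty}\|\nabla u\|_{L^4}^2\|D^2u\|_{L^2}\lesssim\kappa^2\|\nabla u\|_{L^4}^2$. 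Combining, $\|\nabla u\|_{L^4}^4\lesssim\kappa^2\|\nabla u\|_{L^4}^2$; dividing by $\|\nabla u\|_{L^4}^2$ (the claim being trivial if it vanishes) gives $\|\nabla u\|_{L^4(\Omega)}\lesssim\kappa$. The crux of the whole proof is precisely this manipulation: one must use the Neumann condition to discard the boundary contribution and the bound $\|D^2u\|_{L^2}\lesssim\kappa^2$ from step (iv) to absorb the right-hand side into the left.
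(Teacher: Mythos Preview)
The paper itself does not prove this theorem; it merely cites \cite{DuGuPe}, \cite{DoeHe23}, and \cite{DDH24} for the various parts. Your proposal therefore supplies considerably more detail than the paper, and steps (i)--(iv) are correct and standard.

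On step (v), your argument is valid but you have slightly undersold Gagliardo--Nirenberg. When you write that GN ``still only gives $\lesssim\kappa^{3/2}$'', you are interpolating $\|\nabla u\|_{L^4}$ between $\|\nabla u\|_{L^2}$ and $\|\nabla u\|_{H^1}$. If instead you use the GN inequality with $\|u\|_{L^\infty}$ as the lower endpoint,
\[
\|\nabla u\|_{L^4(\Omega)}^2 \;\lesssim\; \|D^2 u\|_{L^2(\Omega)}\,\|u\|_{L^\infty(\Omega)} \;+\;\|u\|_{L^\infty(\Omega)}^2,
\]
(valid on bounded Lipschitz domains in $d=2,3$, the interpolation exponent being $\theta=\tfrac12$), then $|u|\le1$ and $\|D^2u\|_{L^2}\lesssim\kappa^2$ from step (iv) immediately yield $\|\nabla u\|_{L^4}\lesssim\kappa$. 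Your integration-by-parts computation is in fact a direct proof of precisely this GN inequality, so the two approaches coincide; citing GN is simply quicker.

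One technical caveat: the integration by parts you perform and the vanishing of the boundary integral $\int_{\partial\Omega}(\nabla u\cdot\mathbf n)|\nabla u|^2\bar u\,\d S$ are not entirely automatic for $u$ merely in $H^2(\Omega)$, since $|\nabla u|^2$ need not have a well-defined trace and the product rule requires care. This is not a genuine gap---it is handled by approximating $u$ in $H^2$ by smooth functions (using a suitable extension to avoid the boundary condition), for which the identity holds, and passing to the limit---but it deserves a sentence of justification.
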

Global minimizers $u$ are not unique, but the estimates in the theorem hold uniformly in $\kappa$ for all such $u$. For a proof of the theorem we refer to \cite{DuGuPe} for the existence and the pointwise bounds and to \cite{DoeHe23} for the $\kappa$-explicit estimates of $u$ in $H^1$, $H^2$ and of $\nabla u$ in $L^4$. The estimate $\Vert u \Vert_{H^1_\kappa(\Omega)} \lesssim \| u\|_{L^2(\Omega)}$ is proved in \cite{DDH24}. The theorem shows that $\nabla u$ scales essentially with $\kappa^{-1}$, whereas the Hessian $D^2u$ scales with $\kappa^{-2}$. The pointwise bounds imply that the density $|u|$ of superconducting electrons takes values between $0$ and $1$ which is consistent with its physical interpretation.

\subsection{First and second order minimality conditions}
\label{subsection:first-second-order-minimality-conditions}
In a straightforward manner it is seen that the energy is Fr\'echet differentiable on the real Hilbert space $H^1(\Omega)$ and the corresponding first and second Fr\'echet derivatives are given by
\begin{eqnarray}
\label{derivaitves-1}
 \langle E^\prime (v), w \rangle &=& ( \tfrac{\i}{\kappa} \nabla v + \bfA v , \tfrac{\i}{\kappa} \nabla w + \bfA w )_{L^2(\Omega)} + (\, (\vert v \vert^2 -1) v , w )_{L^2(\Omega)} 
\end{eqnarray}
and
\begin{eqnarray}
\label{derivaitves-2}
 \langle E^{\prime\prime} (v)z, w \rangle &=& ( \tfrac{\i}{\kappa} \nabla z+ \bfA z , \tfrac{\i}{\kappa} \nabla w + \bfA w  )_{L^2(\Omega)} + ( \, (\vert v \vert^2 \hspace{-2pt} -\hspace{-2pt}1) z  + v^2 z^\ast +\vert v \vert^2 z ,  w \,)_{L^2(\Omega)} 
\end{eqnarray}
for $v,w,z \in H^1(\Omega)$. Standard theory for minimization/optimization problems (cf. \cite{Casas-trroeltsch-2015}) guarantees that if $u$ is a minimizer of $E$, then it must hold $E^{\prime}(u)=0$ and $E^{\prime\prime}(u) \ge 0$ (where the last statement is understood as the spectrum of $E^{\prime\prime}(u)$ is non-negative). The first order condition $E^{\prime}(u)=0$ for any minimizer is equivalent to the Ginzburg--Landau equation (cf. \eqref{stationarGLE}) and reads in variational form
\begin{align}
\label{eq_GL}
 ( \tfrac{\i}{\kappa} \nabla u + \bfA u , \tfrac{\i}{\kappa} \nabla w + \bfA w)_{L^2(\Omega)} + (\, (\vert u \vert^2 -1) u , w )_{L^2(\Omega)}  = 0 \qquad \mbox{for all } w \in H^1(\Omega).
\end{align}
Note here that the Ginzburg--Landau equation is usually introduced without the real parts in front of the integrals (as contained in our definition of $(\cdot,\cdot)_{L^2(\Omega)}$). However, note that these real parts can be dropped if we simultaneously test in \eqref{eq_GL} with $w$ (giving the real part of the integrals) and with $\ci w$ (giving the imaginary parts of the integrals) and sum up the two contributions.

The second order condition $E^{\prime\prime}(u) \ge 0$ is a necessary, but not a sufficient condition for local minima. This raises the question if we can even expect the sufficient condition $E^{\prime\prime}(u) > 0$ (i.e. only positive eigenvalues) to hold. This is unfortunately not the case, as seen by a simple calculation. Let $u \in H^1(\Omega)$ be any minimizer, then $\ci u$ is another minimizer because $E(u)=E(\ci u)$. Hence, we have with the first order minimality condition
\begin{align}
\label{first-order-opt-cond}
E^\prime ( u ) = E^\prime (\ci u ) = 0.
\end{align}
With this, we use $v=u$ and $z = \ci u$ in \eqref{derivaitves-2} to obtain for all $w \in H^1(\Omega)$
\begin{eqnarray*}
\lefteqn{ \langle E^{\prime\prime} (u) \ci u, w \rangle } \\
&=& ( \tfrac{\i}{\kappa} \nabla (\ci u)+ \bfA (\ci u) , \tfrac{\i}{\kappa} \nabla w + \bfA w  )_{L^2(\Omega)} + ( \, (\vert u \vert^2 -1) (\ci u)  + u^2 (\ci u)^\ast +\vert u \vert^2 (\ci u) ,  w \,)_{L^2(\Omega)}  \\
&\overset{\eqref{derivaitves-1}}{=}&
 \langle E^\prime ( \ci u ), w \rangle +  ( \,  - \ci |u|^2 u + \ci \vert u \vert^2 u ,  w \,)_{L^2(\Omega)}  \,\,\, = \,\,\,\langle E^\prime ( \ci u ), w \rangle  \,\,\, \overset{\eqref{first-order-opt-cond}}{=} \,\,\,  0.
\end{eqnarray*}
Hence, $\ci u$ is an eigenfunction of $E^{\prime\prime} (u)$ with eigenvalue $0$ and $E^{\prime\prime}(u) > 0$ cannot hold without further restrictions. This makes also sense when making the well-known observation that
$$
E( \,\exp(\ci \omega) u  \,) = E( u ) \qquad \mbox{for all } \omega \in [-\pi,\pi ).
$$ 
The property, known as gauge invariance of the energy, implies that ground states are at most unique up to a constant complex phase shift $\exp(\ci \omega)$. Hence, the energy is constant on the circular curve given by $\gamma: \omega \mapsto \exp(\ci \omega) u$ and we consequently obtain $\frac{\mbox{\scriptsize d}^{k}}{\mbox{\scriptsize d}^{k}\omega}E(\,\gamma(\omega)\,) =0$ for any $k\ge1$. Noting that $\gamma(0)=u$ and $\gamma^{\prime}(u) = \ci u$, we conclude that 
$$
0= \frac{\mbox{d}^{2}}{\mbox{d}^{2}\omega}E(\,\gamma(\omega)\,) \big\vert_{\omega = 0} = 
\langle E^{\prime\prime}(\,\gamma(0)\,) \, \gamma^{\prime}(0) , \gamma^{\prime}(0)  \rangle + \langle E^{\prime}(\,\gamma(0)\,) , \gamma^{\prime\prime}(0) \rangle
= \langle E^{\prime\prime}(u) \ci u , \ci u \rangle,
$$
which precisely confirms again that $E^{\prime\prime}(u)$ is singular in the direction $\ci u$. Even though a rigorous analytical justification is still an open problem of the field, numerical experiments indicate that $\ci u$ is the only singular direction and that all other eigenvalues of $E^{\prime\prime}(u)$ are indeed positive. In other words: zero is a simple eigenvalue of $E''(u)$ with kernel $\mathrm{span} \{ \i u\}$. In the numerical experiments in Section \ref{section:numerical_experiments} we computed the spectrum and confirmed this claim in our test setting. Note that this implies that the considered minimizer is well-separated from all other potential minimizers (aside from phase shifts).
We refer to \cite{DoeHe23} for a more detailed discussion of this issue. For our analysis, we therefore make the reasonable assumption that $E^{\prime\prime}(u)$ has only positive eigenvalues in the orthogonal complement of the first eigenspace $\mbox{span}\{ \ci u\}$, or, equivalently expressed, that all minimizers are locally unique up to phase shifts. The assumption is fixed as follows.
\begin{enumerate}[resume,label={(A\arabic*)}]
\item\label{A4} For any minimizer $u \in \underset{{v \in H^1(\Omega)}}{\mbox{arg\,min}} \, E(v)$, it holds
$$
\langle E^{\prime\prime}(u) v , v \rangle >0 \qquad \mbox{for all } v \in \orthiu \setminus \{ 0 \},
$$
where $\orthiu$ denotes the $L^2$-orthogonal complement of $\i u$ in $H^1$, i.e.,
\begin{align*}
\orthiu := \{ v \in H^1(\Omega) \mid (\i u, v)_{L^2(\Omega)} =0 \}. 
\end{align*}
\end{enumerate}
Recall here that our definition of $(\cdot,\cdot)_{L^2(\Omega)}$ only includes real parts, so that the orthogonal complement of $\ci u$ is not the same as the orthogonal complement of $u$. Also note that for any minimizer $u$, we naturally have $\langle E^{\prime\prime}(u) v , v \rangle \ge 0$ and  $\langle E^{\prime\prime}(u) \ci u , \cdot \,\,\rangle = 0$. Hence, assumption \ref{A4} effectively demands that the spectrum of $E^{\prime\prime}(u)$ is non-negative and that the second smallest eigenvalue of $E^{\prime\prime}(u)$ is bounded away from zero. Once $u$ is computed, this can be verified a posteriori by computing the spectrum of $E^{\prime\prime}(u)$, cf. Section \ref{section:numerical_experiments}. Details on how the eigenvalues of $E^{\prime\prime}(u)$ are practically computed are given in \cite[Appendix A]{DoeHe23}.

Next, we observe that Assumption \ref{A4} directly implies coercivity of $E^{\prime\prime}(u)$ on $\orthiu$ (cf. \cite[Proposition 2.6]{DoeHe23}).
\begin{proposition}
\label{coercivity_secE_u}
Assume \ref{A1}-\ref{A4}. Then there exists a constant $\Ccoe^{-1}>0$, which depends on $u$ and $\kappa$, such that
\begin{align*}
\langle E^{\prime\prime}(u) v , v \rangle \ge \Ccoe^{-1} \, \| v \|_{H^1_{\kappa}(\Omega)}^2 \qquad \mbox{for all } v \in \orthiu.
\end{align*}
\end{proposition}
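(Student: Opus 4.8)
The plan is to derive the coercivity estimate from Assumption \ref{A4} by a standard contradiction-and-compactness argument, exploiting the fact that $E^{\prime\prime}(u)$ differs from the $H^1_\kappa$-inner product (up to a fixed constant depending on $\kappa$) by a compact perturbation. First I would observe that the quadratic form $\langle E^{\prime\prime}(u) v, v\rangle$ splits, using \eqref{derivaitves-2}, into the ``principal part'' $(\tfrac{\i}{\kappa}\nabla v + \bfA v, \tfrac{\i}{\kappa}\nabla v + \bfA v)_{L^2(\Omega)}$ and the ``lower-order part'' $((|u|^2-1)v + u^2 v^\ast + |u|^2 v, v)_{L^2(\Omega)}$. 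Using $\div \bfA = 0$, $\bfA\cdot\mathbf n = 0$ (Assumption \ref{A2}) and $\bfA \in L^\infty$, the principal part controls $\tfrac{1}{\kappa^2}\|\nabla v\|_{L^2(\Omega)}^2$ up to adding a multiple of $\|v\|_{L^2(\Omega)}^2$; concretely, one shows $(\tfrac{\i}{\kappa}\nabla v + \bfA v, \tfrac{\i}{\kappa}\nabla v + \bfA v)_{L^2(\Omega)} + C_{\bfA}\,\|v\|_{L^2(\Omega)}^2 \gtrsim \tfrac{1}{\kappa^2}\|\nabla v\|_{L^2(\Omega)}^2$ for a constant $C_\bfA$ depending only on $\|\bfA\|_{L^\infty}$. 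Combined with the trivial bound on the $L^2$-part (using $|u|\le 1$ from Theorem \ref{estimates_u}, so the zero-order coefficients are bounded), this gives a Gårding-type inequality $\langle E^{\prime\prime}(u) v, v\rangle \ge c_1 \|v\|_{H^1_\kappa(\Omega)}^2 - c_2 \|v\|_{L^2(\Omega)}^2$ with $c_1, c_2 > 0$ depending on $u$ and $\kappa$.

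Next I would run the contradiction argument. Suppose the claimed estimate fails for every constant; then there is a sequence $(v_n) \subset \orthiu$ with $\|v_n\|_{H^1_\kappa(\Omega)} = 1$ and $\langle E^{\prime\prime}(u) v_n, v_n\rangle \to 0$. By reflexivity of $H^1(\Omega)$, pass to a subsequence with $v_n \rightharpoonup v$ weakly in $H^1(\Omega)$ and, by the Rellich--Kondrachov theorem (using \ref{A1} that $\Omega$ is a bounded Lipschitz, indeed convex polytope, domain), $v_n \to v$ strongly in $L^2(\Omega)$. The subspace $\orthiu$ is weakly closed (it is a closed linear subspace defined by a bounded linear functional, since $(\i u, \cdot)_{L^2(\Omega)}$ is $H^1$-continuous), hence $v \in \orthiu$. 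Strong $L^2$-convergence passes through the zero-order term of $E^{\prime\prime}(u)$, while weak $H^1$-convergence combined with the Gårding inequality yields weak lower semicontinuity of the full quadratic form: $\langle E^{\prime\prime}(u) v, v\rangle \le \liminf_n \langle E^{\prime\prime}(u) v_n, v_n\rangle = 0$. By Assumption \ref{A4}, this forces $v = 0$. But then the Gårding inequality gives $0 = \lim_n \langle E^{\prime\prime}(u) v_n, v_n\rangle \ge \limsup_n \big(c_1 \|v_n\|_{H^1_\kappa(\Omega)}^2 - c_2 \|v_n\|_{L^2(\Omega)}^2\big) = c_1 - c_2 \lim_n \|v_n\|_{L^2(\Omega)}^2 = c_1 > 0$, since $\|v_n\|_{L^2(\Omega)} \to \|v\|_{L^2(\Omega)} = 0$. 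This contradiction establishes the existence of $\Ccoe^{-1} > 0$.

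The main obstacle here is not conceptual but bookkeeping: one must make sure every constant that appears is genuinely finite, which is where Theorem \ref{estimates_u} is essential — without the pointwise bound $|u|\le 1$ the zero-order coefficients $(|u|^2-1)$ and $|u|^2$ in \eqref{derivaitves-2} would not be obviously bounded, and the Gårding inequality and the $L^2$-continuity of the perturbation could fail. One also has to be slightly careful that the resulting constant $\Ccoe^{-1}$ is allowed to depend on both $u$ and $\kappa$ (the statement explicitly permits this), so no uniformity in $\kappa$ is required at this stage — that refinement, i.e. tracking how $\rho(\kappa)$ blows up with $\kappa$, is precisely what the later sections address and is deliberately outside the scope of this proposition. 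Finally, since Assumption \ref{A4} is phrased for \emph{any} minimizer $u$ and minimizers are not unique, the argument above is simply applied to the fixed $u$ under consideration, which is legitimate because \ref{A4} is a statement about each minimizer individually.
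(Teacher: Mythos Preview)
Your argument is correct. Both your proof and the paper's establish the same G{\aa}rding inequality for $E^{\prime\prime}(u)$ and then upgrade it to full coercivity on $\orthiu$ using Assumption~\ref{A4}, but the upgrade steps differ. You run a contradiction/compactness argument: normalize in $H^1_\kappa$, extract a weak $H^1$/strong $L^2$ limit via Rellich, use weak lower semicontinuity of the principal part and strong $L^2$-convergence of the zero-order part to force the limit into the kernel, and then derive a contradiction from the G{\aa}rding inequality. The paper instead invokes the Courant--Fischer characterization to obtain directly $\langle E^{\prime\prime}(u)v,v\rangle \ge \lambda_2 \|v\|_{L^2(\Omega)}^2$ on $\orthiu$ with $\lambda_2>0$ the second eigenvalue, and then takes a convex combination with the G{\aa}rding inequality to produce the explicit constant $\Ccoe^{-1} = \tfrac{\lambda_2}{2(\lambda_2 + 3/2 + \|\bfA\|_{L^\infty(\Omega)})}$. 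The paper's route buys an explicit formula for $\Ccoe$ in terms of $\lambda_2$, which matters downstream: the numerical experiments in Section~\ref{section:numerical_experiments} actually compute $\rho(\kappa)$ via this eigenvalue, and the discussion of its polynomial growth in $\kappa$ relies on this identification. Your compactness argument is equally valid for the bare existence statement and is arguably more self-contained (it does not rely on knowing that $E^{\prime\prime}(u)$ has discrete spectrum), but it yields no quantitative handle on the constant.
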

The precise dependency of $\Ccoe$ on $\kappa$ is unknown, but can be computed numerically as soon as the minimizer is available (or a sufficiently accurate approximation). Empirically it holds $\Ccoe^{-1} \rightarrow 0$ (or equivalently $\Ccoe \rightarrow \infty$) for $\kappa \rightarrow \infty$. The numerical experiments in \cite{DoeHe23} suggest that the decay is polynomial on convex domains, i.e., $\Ccoe \lesssim \kappa^{\alpha}$ for some positive $\alpha$.
\begin{proof}[Proof of Proposition \ref{coercivity_secE_u}]
The short argument is standard and we briefly sketch it for completeness. Assumption \ref{A4} and the Courant--Fischer theorem imply that
\begin{align*}
\lambda_2 := \underset{v\not= 0}{\inf_{v \in \orthiu}} \frac{\langle E^{\prime\prime}(u) v , v \rangle }{(v,v)_{L^2(\Omega)} } >0
\qquad
\mbox{and hence}
\quad
\langle E^{\prime\prime}(u) v , v \rangle \ge \lambda_2 \| v \|_{L^2(\Omega)}^2 \quad \mbox{for all } v \in \orthiu.
\end{align*}
At the same time, we easily verify with \eqref{derivaitves-2}, the pointwise estimate $|u|\le 1$, and the identity $( \, u^2 v^\ast +\vert u \vert^2 v ,  v \,)_{L^2(\Omega)}  = 2 \int_{\Omega} \Re( u  v^\ast )^2 \d x \ge 0$
that the following G{\aa}rding inequality holds:
\begin{align*}
\langle E^{\prime\prime}(u) v , v \rangle \ge \frac{1}{2\kappa^2} \| \nabla v \|^2_{L^2(\Omega)} - \| \bfA v \|_{L^2(\Omega)}^2 - \| v \|_{L^2(\Omega)}^2 
\ge  \frac{1}{2} \| v \|^2_{H^1_{\kappa}(\Omega)}  - (\tfrac{3}{2} + \| \bfA \|_{L^{\infty}(\Omega)} ) \| v \|_{L^2(\Omega)}^2 .
\end{align*}
Combining both lower bounds for $\langle E^{\prime\prime}(u) v , v \rangle$ we obtain the desired coercivity with
\begin{align}
\label{coercivity-Eprimeprime}
\langle E^{\prime\prime}(u) v , v \rangle \,\, \ge \,\, \frac{\lambda_2}{2(\lambda_2+ \tfrac{3}{2} + \| \bfA \|_{L^{\infty}(\Omega)} )}  \| v \|^2_{H^1_{\kappa}(\Omega)}.
\end{align}
The $\kappa$ dependency enters through $\lambda_2$.
\end{proof}
By the boundedness of $\bfA$ and $u$ it is easy to see that $E^{\prime\prime}(u)$ is also continuous on $H^1(\Omega)$ w.r.t.\ $\| \cdot \|_{H^1_\kappa(\Omega)}$ with a continuity constant independent of $\kappa$, that is, it holds
\begin{align*}
\langle E^{\prime\prime}(u) v, w \rangle \lesssim \| v \|_{H^1_{\kappa}(\Omega)}  \| w \|_{H^1_{\kappa}(\Omega)}
\end{align*}
for all $v,w \in H^1(\Omega)$. See also \cite[Lemma 2.3]{DoeHe23}.

\begin{remark}[Boundary condition]
\label{boundary-condition}
As mentioned above, any minimizer $u \in H^1(\Omega)$ solves the Ginzburg--Landau equation \eqref{eq_GL} with the natural boundary condition $(\tfrac{\ci}{\kappa}\nabla u + \bfA \hspace{1pt} u) \cdot \mathbf{n}=0$ on $\partial \Omega$. Since assumption \ref{A2} guarantees $\bfA \cdot \mathbf{n}=0$, the boundary condition reduces to $\nabla u \cdot \mathbf{n}=0$. Recalling now that $\Omega$ is Lipschitz-continuous (which follows from the convexity) and $u\in H^2(\Omega)$ for any minimizer, we verify that the boundary condition $\nabla u \cdot \mathbf{n}=0$ is in fact fulfilled in the sense of traces.
\end{remark}

\subsection{A stabilized bilinear form on $H^1_{\kappa}(\Omega)$}
A crucial component of our error analysis and the later construction of multiscale spaces is a bilinear form that is based on the $u$-independent part of $\langle E^{\prime\prime}(u) \,\cdot ,\cdot \rangle$ and that is stabilized by a sufficiently large $L^2$-contribution required to ensure coercivity, cf. \cite{DoeHe23}. To be precise, we let
\begin{eqnarray}
\abeta(v,w) &:=& ( \tfrac{\i}{\kappa} \nabla v+ \bfA v , \tfrac{\i}{\kappa} \nabla w + \bfA w  )_{L^2(\Omega)} + \beta \, (v,w)_{L^2(\Omega)}
	\label{bilinear_forms}
\end{eqnarray}
for a stabilization parameter $\beta \ge 0$. We will track the parameter in our estimates and investigate different choices both analytically and numerically. The bilinear form is continuous and coercive for sufficiently large values of $\beta$.
\begin{lemma} \label{Lemma_2.1}
	Assume \ref{A1}-\ref{A3}. Then there is a constant $C_{\hspace{-1pt}\bfA} > 0$ independent of $\kappa$ and $\beta$ such that for all $v,w \in H^1(\Omega)$  it holds
	\begin{equation*} 
	\abeta (v,w) \, \leq \, C_{\hspace{-1pt}\bfA} \Vert v \Vert_{H^1_\kappa(\Omega)} \, \Vert w \Vert_{H^1_\kappa(\Omega)} +  \beta \, \Vert v \Vert_{L^2(\Omega)} \, \Vert w \Vert_{L^2(\Omega)}.
	\end{equation*} 
	Furthermore, if $\beta$ and $c_{\hspace{-1pt}\bfA}$ are selected such that $0 \le c_{\hspace{-1pt}\bfA} < 1$ and $ \beta  \ge \frac{1+  2 \| \bfA \|_{L^\infty(\Omega)}^2 }{2(1 - c_{\hspace{-1pt}\bfA})}$, then it holds
	\begin{equation*} 
	\abeta (v,v) \, \geq \, \tfrac{1}{2} \Vert v \Vert^2_{H^1_\kappa(\Omega)} + c_{\hspace{-1pt}\bfA} \,\beta \| v \|_{L^2(\Omega)}^2 \,\,\, \ge\, \tfrac{1}{2} \Vert v \Vert^2_{H^1_\kappa(\Omega)}.
	\end{equation*} 
\end{lemma}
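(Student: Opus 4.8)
The plan is to read off both inequalities directly from the definition \eqref{bilinear_forms} of $\abeta$ together with the Cauchy--Schwarz and Young inequalities; no structural property of $\bfA$ beyond its $L^\infty$-bound is needed here (the conditions $\div\bfA=0$ and $\bfA\cdot\mathbf{n}=0$ from \ref{A2} only enter the analysis later). For the continuity bound I would first apply Cauchy--Schwarz to both summands of $\abeta(v,w)$, obtaining
\begin{equation*}
\abeta(v,w) \le \| \tfrac{\i}{\kappa} \nabla v + \bfA v \|_{L^2(\Omega)} \, \| \tfrac{\i}{\kappa} \nabla w + \bfA w \|_{L^2(\Omega)} + \beta \, \| v \|_{L^2(\Omega)} \, \| w \|_{L^2(\Omega)} .
\end{equation*}
Then the triangle inequality and $\| \bfA v \|_{L^2(\Omega)} \le \| \bfA \|_{L^\infty(\Omega)} \| v \|_{L^2(\Omega)}$ give $\| \tfrac{\i}{\kappa} \nabla v + \bfA v \|_{L^2(\Omega)} \le \tfrac1\kappa \| \nabla v \|_{L^2(\Omega)} + \| \bfA \|_{L^\infty(\Omega)} \| v \|_{L^2(\Omega)} \le (1 + \| \bfA \|_{L^\infty(\Omega)}) \| v \|_{H^1_\kappa(\Omega)}$, since both $\tfrac1\kappa \| \nabla v \|_{L^2(\Omega)}$ and $\| v \|_{L^2(\Omega)}$ are dominated by $\| v \|_{H^1_\kappa(\Omega)}$. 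Inserting this (and the analogous bound for $w$) proves the first claim with $C_{\hspace{-1pt}\bfA} = (1 + \| \bfA \|_{L^\infty(\Omega)})^2$, which is manifestly independent of $\kappa$ and $\beta$.

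For the coercivity bound the starting point is the identity $\abeta(v,v) = \| \tfrac{\i}{\kappa} \nabla v + \bfA v \|_{L^2(\Omega)}^2 + \beta \| v \|_{L^2(\Omega)}^2$. The key elementary estimate is $\| x + y \|^2 \ge \tfrac12 \| x \|^2 - \| y \|^2$ (equivalently $2\,|(x,y)| \le \tfrac12\|x\|^2 + 2\|y\|^2$, which is $\tfrac12(\|x\|-2\|y\|)^2\ge0$), applied with $x = \tfrac{\i}{\kappa} \nabla v$ and $y = \bfA v$; combined with $\| \bfA v \|_{L^2(\Omega)} \le \| \bfA \|_{L^\infty(\Omega)} \| v \|_{L^2(\Omega)}$ this yields
\begin{equation*}
\abeta(v,v) \ge \tfrac{1}{2\kappa^2} \| \nabla v \|_{L^2(\Omega)}^2 + \bigl( \beta - \| \bfA \|_{L^\infty(\Omega)}^2 \bigr) \| v \|_{L^2(\Omega)}^2 .
\end{equation*}
Rewriting $\tfrac{1}{2\kappa^2}\| \nabla v \|_{L^2(\Omega)}^2 = \tfrac12 \| v \|_{H^1_\kappa(\Omega)}^2 - \tfrac12 \| v \|_{L^2(\Omega)}^2$ and collecting the $L^2$-terms, the residual coefficient in front of $\| v \|_{L^2(\Omega)}^2$ becomes $\beta - \| \bfA \|_{L^\infty(\Omega)}^2 - \tfrac12$, and the hypothesis $\beta \ge \frac{1 + 2\| \bfA \|_{L^\infty(\Omega)}^2}{2(1 - c_{\hspace{-1pt}\bfA})}$ is exactly what is needed to guarantee $\beta - \| \bfA \|_{L^\infty(\Omega)}^2 - \tfrac12 \ge c_{\hspace{-1pt}\bfA}\,\beta$. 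This gives $\abeta(v,v) \ge \tfrac12 \| v \|_{H^1_\kappa(\Omega)}^2 + c_{\hspace{-1pt}\bfA}\,\beta\,\| v \|_{L^2(\Omega)}^2$, and the final inequality follows by discarding the nonnegative term $c_{\hspace{-1pt}\bfA}\,\beta\,\| v \|_{L^2(\Omega)}^2$.

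There is no genuine obstacle here; the statement is a routine G{\aa}rding-type estimate. The only point requiring mild care is the choice of the splitting weight in the Young inequality (here the pair $\tfrac12$ and $2$): it must be picked so that precisely half of the $\kappa$-weighted gradient norm survives and the threshold on $\beta$ comes out as stated, whereas a different split would produce a different — but qualitatively equivalent — admissible range for $\beta$. It is also worth noting that the argument uses nothing about $\bfA$ beyond boundedness, so both continuity and coercivity hold for any $\bfA \in L^\infty(\Omega;\R^d)$.
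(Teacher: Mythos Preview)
Your proof is correct and follows essentially the same approach as the paper's own proof: the continuity is dispatched via Cauchy--Schwarz and the triangle inequality (the paper just calls it ``straightforward''), and the coercivity uses exactly the same Young-type splitting $|x+y|^2\ge\tfrac12|x|^2-|y|^2$ with $x=\tfrac{\i}{\kappa}\nabla v$, $y=\bfA v$, followed by the same rearrangement and the same reading of the threshold condition on $\beta$.
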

The result can be already extracted from \cite{DoeHe23}. We briefly present the arguments here to specify the $\beta$-dependency and because we need to revisit it at a later point (cf. Lemma \ref{lemma-abeta-coercive-on-W} below).
\begin{proof}[Proof of Lemma \ref{Lemma_2.1}]
The continuity is straightforward. For the coercivity we use the inverse triangle inequality and the Young inequality to obtain
	\begin{align*}
	| \tfrac{\ci}{\kappa} \nabla v + \bfA v |^2 \ge \tfrac{1}{\kappa^{2}} |\nabla v |^2 + | \bfA v|^2 - 2 \tfrac{1}{\kappa} |\bfA v| \, \, |\nabla v | \ge \tfrac{1}{2} \tfrac{1}{\kappa^{2}} |\nabla v |^2 -  | \bfA v|^2.
	\end{align*}
	Hence
	\begin{align*}
	| \tfrac{\ci}{\kappa} \nabla v + \bfA v |^2 + \beta \, |v|^2 &\ge \tfrac{1}{2}  \tfrac{1}{\kappa^{2}} |\nabla v |^2 + \tfrac{1}{2}| v|^2 + ( \beta - \| \bfA \|_{L^\infty(\Omega)}^2 - \tfrac{1}{2})  | v|^2.
	\end{align*}
	With the condition $(1 - c_{\hspace{-1pt}\bfA}) \beta \ge \tfrac{1}{2}+ \| \bfA \|_{L^\infty(\Omega)}^2$ we conclude
	$\abeta (v,v) \geq \tfrac{1}{2} \Vert v \Vert^2_{H^1_\kappa(\Omega)} + c_{\hspace{-1pt}\bfA} \beta  \| v\|^2_{L^2(\Omega)}$.
\end{proof}
Lemma \ref{Lemma_2.1} shows that $\abeta(\cdot,\cdot)$ is always an inner product on $H^1(\Omega)$ if $ \beta \ge \tfrac{1}{2}+  \| \bfA \|_{L^\infty(\Omega)}^2 $. In this case, we choose $c_{\hspace{-1pt}\bfA}=0$.

\section{Finite element discretization}
\label{section:standardFEM}
In this section we recall the essential approximations properties of $\mathbb{P}^1$-Lagrange finite element approximations of minimizers. For that we assume that 
\begin{enumerate}[resume,label={(A\arabic*)}]
\item\label{A5} $\mathcal{T}_h$ is a shape-regular, conforming and quasi-uniform partition of $\Omega$ into triangles ($d=2$) or tetrahedra ($d=3$).
\end{enumerate}
The conformity implies that two distinct elements $T, T'\in \mathcal{T}_h$ are either disjoint or share a common vertex, edge or face. We define the corresponding (maximum) mesh size of the partition $\mathcal{T}_h$ as $h:=\max\{ \diam(T)\,|\,T\in \mathcal{T}_{h}\}$. 
By 
\begin{align}
\label{def-Vh}
V_h := \{ v_h \in H^1(\Omega) \, | \,\, v_h \vert_{T} \in \mathbb{P}^1(T) \mbox{ for all } T \in \mathcal{T}_h \}
\end{align}
we denote the corresponding space of (conforming) $\mathbb{P}^1$-Lagrange finite elements. The following theorem summarizes the approximation properties of minimizers in $V_h$ as proved in \cite[Theorem 3.3 and Corollary 3.4]{DoeHe23}.
\begin{theorem}\label{theorem-FEM-estimates}
Assume \ref{A1}-\ref{A5} and let $u_h \in V_h$ denote a discrete minimizer, i.e., with
$$
E(u_h) = \min_{v_h \in V_h} E(v_h). 
$$ 
If the mesh size $h$ is sufficiently small (possibly depending on $\kappa$), then there exists an open ball $B_{\delta}(u_h):= \{ v\in H^1(\Omega)\,|\, \| v-u_h \|_{H^1_{\kappa}(\Omega)} < \delta \}$ for some $\delta>0$, such that there is a unique exact minimizer $u \in H^1(\Omega)$ to \eqref{minimizer-energy-def} with 
\begin{align}
\label{same-phase-u-uh}
u \in B_{\delta}(u_h), \qquad 
(\ci u , u_h  )_{L^2(\Omega)} = 0 \qquad \mbox{and} \qquad (u , u_h  )_{L^2(\Omega)} \ge 0
\end{align}
and with
\begin{align*}
\| u - u_h \|_{H^1_{\kappa}(\Omega)} \lesssim (1+ \kappa \Ccoe h ) \inf_{v_h \in V_h} \| u - v_h \|_{H^1_{\kappa}(\Omega)}.
\end{align*}
Here we recall $\Ccoe^{-1}$ as the coercivity constant from Proposition \ref{coercivity_secE_u}. Exploiting the $H^2$-regularity of $u$ (cf. Theorem \ref{estimates_u}) we conclude
\begin{align}
\label{H1-error-estimate-P1FEM}
\| u - u_h \|_{H^1_{\kappa}(\Omega)} \lesssim (1+ \kappa \Ccoe h ) \, h \kappa.
\end{align}
\end{theorem}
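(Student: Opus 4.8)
The plan follows the standard template for Galerkin approximation of critical points of a non-convex functional with a one-dimensional degeneracy — here the gauge direction $\ci u$. The three stages are: (i) produce a discrete minimizer and show that discrete minimizers cluster at exact ones as $h\to0$; (ii) fix the gauge so as to single out a unique nearby exact minimizer $u$; (iii) convert the defect of the discrete Euler--Lagrange equation into a quantitative $H^1_\kappa$-bound, and refine the constant from a naive $\Ccoe$ to $1+\kappa\Ccoe h$ by a duality argument exploiting the convexity of $\Omega$. Inserting the $H^2$-regularity of Theorem \ref{estimates_u} then yields \eqref{H1-error-estimate-P1FEM}.

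For (i)--(ii) I would first note that $E$ is continuous and coercive on the finite-dimensional space $V_h$ (the quartic term in \eqref{eq_GLenergy} dominates), so a discrete minimizer $u_h$ exists. Comparing $E(u_h)\le E(I_h u_\ast)$ with the $\mathbb{P}^1$-nodal interpolant $I_h u_\ast$ of an exact minimizer $u_\ast$ (which exists by Theorem \ref{estimates_u}), and using the equicoercivity and weak lower semicontinuity of $E$, every sequence of discrete minimizers has a subsequence converging in $H^1(\Omega)$ to an exact minimizer; hence, for $h$ small, some exact minimizer $u_\ast$ satisfies $\|u_\ast-u_h\|_{H^1_\kappa(\Omega)}$ arbitrarily small. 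On the orbit $\{\exp(\ci\omega)u_\ast\}$ the map $\omega\mapsto(\exp(\ci\omega)u_\ast,u_h)_{L^2(\Omega)}=r\cos(\omega+\theta_0)$ has $r>0$ for $h$ small, and its unique maximiser $\omega^\ast$ produces $u:=\exp(\ci\omega^\ast)u_\ast$ with $(\ci u,u_h)_{L^2(\Omega)}=0$ and $(u,u_h)_{L^2(\Omega)}=r\ge0$, i.e.\ \eqref{same-phase-u-uh}. Assumption \ref{A4} (local uniqueness of minimizers up to phase) then forces $u$ to be the only exact minimizer in a sufficiently small ball $B_\delta(u_h)$.

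For (iii), since $u_h$ is a discrete critical point and $E^\prime(u)=0$, subtracting the Euler--Lagrange equations and using the exact mean-value identity $E^\prime(u)-E^\prime(u_h)=\mathcal L e$ with $e:=u-u_h$ and $\mathcal L:=\int_0^1 E^{\prime\prime}(u_h+te)\d t$ gives $\langle\mathcal L e,v_h\rangle=0$ for all $v_h\in V_h$; moreover $e\in\orthiu$, since $(\ci u,u)_{L^2(\Omega)}=0$ and $(\ci u,u_h)_{L^2(\Omega)}=0$. For $\|e\|_{H^1_\kappa(\Omega)}$ small (ensured by (i) once $h$ is small), the averaged Hessian $\mathcal L$ differs from $E^{\prime\prime}(u)$ only by a term controlled $\kappa$-uniformly by $\|e\|_{H^1_\kappa(\Omega)}$ (using $|u|\le1$, a uniform bound on $u_h$ and Sobolev embedding for $d\le3$), so by Proposition \ref{coercivity_secE_u} the form $\langle\mathcal L\cdot,\cdot\rangle$ is still coercive on $\orthiu$ with constant $\gtrsim\Ccoe^{-1}$, and it is $\kappa$-uniformly continuous. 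Then I would write $\langle\mathcal L e,e\rangle=\langle\mathcal L e,e-v_h\rangle$, split $\mathcal L=\Abeta+\mathcal K$ with $\beta$ fixed large enough that $\abeta$ is coercive (Lemma \ref{Lemma_2.1}) and $\mathcal K$ a lower-order, $L^2$-bounded remainder, and pick $v_h$ to be the $\abeta$-Ritz projection of $e$: the $\abeta$-part then equals $\|e-v_h\|_{\abeta}^2\lesssim(\inf_{w_h}\|u-w_h\|_{H^1_\kappa(\Omega)})^2$, and the $\mathcal K$-part is bounded by $\|e\|_{L^2(\Omega)}$ times the $L^2$-best-approximation error, which by convexity of $\Omega$ and the $\kappa$-weighted $H^2$-regularity of the dual problem is $\lesssim\kappa h\inf_{w_h}\|u-w_h\|_{H^1_\kappa(\Omega)}$. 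A companion Aubin--Nitsche estimate for $e$ itself — solving the dual problem for $\mathcal L$ on $\orthiu$ and again invoking convexity — gives $\|e\|_{L^2(\Omega)}\lesssim\kappa\Ccoe h\,\|e\|_{H^1_\kappa(\Omega)}$. Combining these with the coercivity and absorbing the resulting $(\kappa\Ccoe h)^2\|e\|_{H^1_\kappa(\Omega)}^2$ term for $h$ small, one obtains $\|u-u_h\|_{H^1_\kappa(\Omega)}\lesssim(1+\kappa\Ccoe h)\inf_{v_h\in V_h}\|u-v_h\|_{H^1_\kappa(\Omega)}$. Finally, choosing $v_h=I_h u$ and using the interpolation error together with $\|u\|_{H^2_\kappa(\Omega)}\lesssim1$ (so $\|D^2u\|_{L^2(\Omega)}\lesssim\kappa^2$) and $h\kappa\lesssim1$ gives $\inf_{v_h}\|u-v_h\|_{H^1_\kappa(\Omega)}\lesssim h\kappa$, hence \eqref{H1-error-estimate-P1FEM}.

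The hard part is entirely in (iii): one must carry all $\kappa$-dependencies explicitly, most critically (a) the perturbation estimate that makes $\langle\mathcal L\cdot,\cdot\rangle$ coercive on $\orthiu$ with a constant comparable to $\Ccoe^{-1}$ — this is also what pins down the precise meaning of \quotes{$h$ sufficiently small} — and (b) the two duality arguments, whose $\kappa$-scaling relies on the convexity of $\Omega$ and the correct $\kappa$-weighted $H^2$-regularity of the linear dual problems; it is precisely the $\Ccoe$ appearing in the $L^2$-estimate for $e$ that produces the exponent $1+\kappa\Ccoe h$ rather than the smaller $1+\kappa h$. The fact that the degenerate direction $\ci u$ lies outside $V_h$ (so $V_h\not\subset\orthiu$) is a further, more routine, technicality, handled by the above choice of test functions and by the orthogonality $e\in\orthiu$.
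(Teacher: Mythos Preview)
The paper does not give its own proof of this theorem; it only cites \cite[Theorem~3.3 and Corollary~3.4]{DoeHe23}. Your outline is a faithful reconstruction of the Schatz--type argument that underlies that reference, and its structure is confirmed by the abstract result (Proposition~5.6 of \cite{DoeHe23}) that the present paper invokes in the proof of Lemma~\ref{abstract-error-estimate-idealLOD}: the discrete error is bounded by $\|u-R_h^{\perp}u\|_{H^1_\kappa}+\Ccoe\|u-R_h^{\perp}u\|_{L^2}$ plus higher-order terms, which is exactly the splitting your two duality steps produce.

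One point deserves a sharper formulation. You invoke Proposition~\ref{coercivity_secE_u} to get $\langle\mathcal L v,v\rangle\gtrsim\Ccoe^{-1}\|v\|_{H^1_\kappa}^2$ on $\orthiu$ and then write $\langle\mathcal L e,e\rangle=\langle\mathcal L e,e-v_h\rangle$. If you use this $\Ccoe^{-1}$-coercivity as the lower bound for $\|e\|_{H^1_\kappa}^2$, multiplying through by $\Ccoe$ contaminates the leading best-approximation term with a factor $\Ccoe$ and you will not recover the stated constant $(1+\kappa\Ccoe h)$. The correct starting inequality is the G{\aa}rding estimate $\langle\mathcal L e,e\rangle\ge c\|e\|_{H^1_\kappa}^2-C\|e\|_{L^2}^2$ with $c,C$ independent of $\kappa$ and $\Ccoe$ (this holds for the averaged Hessian once $u_h$ is close to $u$, by the same argument as in the proof of Proposition~\ref{coercivity_secE_u}). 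Then the extra $\|e\|_{L^2}^2$ term is precisely the $(\kappa\Ccoe h)^2\|e\|_{H^1_\kappa}^2$ contribution you absorb, via your Aubin--Nitsche bound $\|e\|_{L^2}\lesssim\kappa\Ccoe h\,\|e\|_{H^1_\kappa}$. Your mention of that absorption term shows you have the right mechanism; the $\Ccoe^{-1}$-coercivity of $\mathcal L$ is still needed, but only to make the dual problem for $\mathcal L$ on $\orthiu$ well-posed and to control the $H^2_\kappa$-norm of its solution (which is where the $\Ccoe$ in $\kappa\Ccoe h$ originates), not as the primary lower bound.
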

Let us briefly discuss the theorem. First of all, property \eqref{same-phase-u-uh} in Theorem \ref{theorem-FEM-estimates} states that the phases of $u$ and $u_h$ are aligned, which is important to make a reasonable comparison between the two due to lack of uniqueness (cf. Section \ref{subsection:first-second-order-minimality-conditions}). This can be achieved by demanding that the real and imaginary parts of $u$ and $u_h$ are aligned in the sense that $\Re \int_{\Omega} u \, u_h^{\ast} \,\mbox{d}x \,\ge 0$ and $\Im \int_{\Omega} u \, u_h^{\ast} \, \mbox{d}x \,= 0$. Note that the two conditions are equivalently expressed as $(u_h, u)_{L^2(\Omega)} \ge 0$ and $(u_h,\ci u)_{L^2(\Omega)}=0$, hence we have \eqref{same-phase-u-uh}. The condition is natural, since it must be fulfilled in the limit for $u_h=u$. On the contrary, if $u$ and $u_h$ are arbitrary minimizers (whose real and imaginary parts are not necessarily aligned) then we have in general $\int_{\Omega} u \, u_h^{\ast} \d x = r \exp(\ci \omega )$ for some $r \in \R_{\ge 0}$ and $\omega \in [-\pi , \pi)$. The states $u$ and $u_h$ are therefore not directly comparable. However, if we multiply $u$ with $\exp(-\ci \omega )$ then $\tilde{u} := \exp(-\ci \omega ) u$ is still a minimizer since $E(\tilde{u})=E(u)$ and we further have $\int_{\Omega} u \, u_h^{\ast} \d x = r  \in \R_{\ge 0}$. Hence, $\tilde{u}$ fulfills property \eqref{same-phase-u-uh} and is comparable to $u_h$. Consequently, for any discrete minimizer $u_h$ we can always find an exact minimizer $u$ whose real and imaginary parts are aligned with $u_h$ in the sense of \eqref{same-phase-u-uh}.

Second, estimate \eqref{H1-error-estimate-P1FEM} shows that the error in the $H^1_{\kappa}$-norm behaves asymptotically like $h \kappa$, which implies the fundamental resolution condition $h \lesssim \kappa^{-1}$ to obtain small errors and hence meaningful approximations. However, at the same time, the estimate also requires $\kappa \Ccoe h  \lesssim 1$ so that the rate $h \kappa$ becomes visible. Since we expect $\Ccoe$ to grow polynomially with $\kappa$, this suggests a preasymptotic effect where convergence is in fact only observed when $h \lesssim \kappa^{-1} \Ccoe^{-1}$. Numerical experiments \cite{DoeHe23} confirm that such a preasymptoic effect is indeed visible, even though it is open if the condition $h \lesssim \kappa^{-1} \Ccoe^{-1}$ is optimal.\\[0.3em]
In the following sections we will investigate an alternative discretization based on localized orthogonal decomposition and demonstrate how higher order convergence rates can be achieved in these spaces.
\section{Ideal Localized Orthogonal Decomposition}
\label{section:ideal_LOD}
We consider the minimization of $E$ in generalized multiscale spaces based on the concept of {\it localized orthogonal decomposition} as proposed in \cite{DoeHe23}. In a nutshell, the ideal LOD space is constructed by identifying the bilinear form $\abeta(\cdot,\cdot)$ with a differential operator $\Abeta$ and by applying $\Ainvbeta$ to the FE space $V_h$ from Section \ref{section:standardFEM}. For this we need to ensure that $a_{\beta}(\cdot,\cdot)$ is coercive on $H^1(\Omega)$ which is guaranteed by Lemma \ref{Lemma_2.1} if we assume $\beta \ge \tfrac{1}{2} + \| A \|_{L^\infty(\Omega)}^2$. In this case, we define for $f \in L^2(\Omega)$ the image $\Ainvbeta f \in H^1(\Omega)$ as the solution to
\begin{align*}
\abeta( \Ainvbeta f , v) = (f,v)_{L^2(\Omega)}\qquad \mbox{for all } v \in H^1(\Omega).
\end{align*}
Then the ideal LOD space is defined by
$$
\Vlod := \Ainvbeta V_h
$$
which is however only well-defined under the (sufficient) constraint $\beta \ge \tfrac{1}{2} + \| A \|_{L^\infty(\Omega)}^2$. In general, the necessary condition is that $-\beta$ is not an eigenvalue of $a_0(\cdot,\cdot)$ on $V_h$. To avoid such restrictions on $\beta$, we consider an alternative (\quotes{classical}) construction of $\Vlod$ which leads to an approximation space that is in fact well-defined for any $\beta \ge 0$ as long as the mesh size fulfills the natural resolution condition $h \lesssim \kappa^{-1}$. Both constructions of $\Vlod$ are equivalent if $\Ainvbeta V_h$ exists (in particular for sufficiently large $\beta$). With the alternative construction and relaxed assumptions on $\beta$, we generalize the results from \cite[Sec.~6]{DoeHe23}. The subsequent characterization of the space $\Vlod$ lays also the foundation for localization results following later in Section \ref{section:localization-LOD-basis}. Before presenting the characterization, we require a short preparation, that is, we introduce the kernel of the $L^2$-projection on $V_h$ and show coercivity of $\abeta(\cdot,\cdot)$ on that kernel.
\subsection{Coercivity of $\abeta(\cdot,\cdot)$ on the detail space $W$}
In the following we let $\pi_h : H^1(\Omega) \rightarrow V_h$ denote the $L^2$-projection given by
\begin{align}
\label{def-L2-proj}
( \pi_h v ,v_h )_{L^2(\Omega)} = ( v ,v_h )_{L^2(\Omega)}  \qquad \mbox{for all } v_h \in V_h.
\end{align}
Note that even though the inner product $(\cdot,\cdot)_{L^2(\Omega)}$ only contains the real part of the usual complex $L^2$ inner product, the above definition of the $L^2$-projection still coincides with the traditional definition.
With this, we define the kernel of $\pi_h$ on $H^1(\Omega)$ by 
\begin{align}
\label{def-W-detail-space}
W := \mbox{kern}\,\pi_h \vert_{H^1(\Omega)} = \{ v \in H^1(\Omega) \,|\, \pi_hv=0 \}.
\end{align}
This space is a closed subspace of $H^1(\Omega)$ due to the $H^1$-stability of $\pi_h$ for quasi-uniform meshes (which is guaranteed by \ref{A5}). We call $W$ a {\it detail space}, because it contains fine scale details that cannot be represented in the FE space $V_h$.

Even though the coercivity of $\abeta(\cdot,\cdot)$ 
in Lemma \ref{Lemma_2.1} only holds under additional assumptions on $\beta$, we will see next that the bilinear form 
is in fact coercive on $W$ even for $\beta=0$, as long as $h$ is small enough. In general, this property is not true on the full space $H^1(\Omega)$.
\begin{lemma}
\label{lemma-abeta-coercive-on-W}
Assume \ref{A1}-\ref{A3} and \ref{A5}. Then there exists a generic constant $C_{\mbox{\normalfont\tiny res}} >0$ that only depends on $\bfA$, $\Omega$ and the mesh regularity constants of $\mathcal{T}_h$, such that if
$$
h \le C_{\mbox{\normalfont\tiny res}} \kappa^{-1},
$$
then for every $\beta \ge 0$ it holds
	\begin{equation*} 
	\abeta (w,w) \geq \frac{1}{4} \Vert w \Vert^2_{H^1_\kappa(\Omega)} + \beta\, \| w \|_{L^2(\Omega)}^2
	\qquad \mbox{for all } w\in W.
	\end{equation*} 	
\end{lemma}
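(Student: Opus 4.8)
The plan is to exploit the Gårding-type inequality already derived in the proof of Lemma \ref{Lemma_2.1}, which gives for every $w \in H^1(\Omega)$
\begin{align*}
\abeta(w,w) \ge \tfrac{1}{2}\|w\|_{H^1_\kappa(\Omega)}^2 + \beta\|w\|_{L^2(\Omega)}^2 - \bigl(\tfrac{1}{2} + \|\bfA\|_{L^\infty(\Omega)}^2\bigr)\|w\|_{L^2(\Omega)}^2,
\end{align*}
with the last negative term being the only obstruction to coercivity. The key observation is that on the detail space $W$ this term can be absorbed: since $\pi_h w = 0$, an approximation/inverse estimate for the $L^2$-projection on quasi-uniform meshes yields $\|w\|_{L^2(\Omega)} \lesssim h\,\|\nabla w\|_{L^2(\Omega)}$ for all $w \in W$. (Concretely, $\|w\|_{L^2} = \|w - \pi_h w\|_{L^2} \lesssim h\|\nabla w\|_{L^2}$ is the standard interpolation-type bound available because $\pi_h$ reproduces constants and is $H^1$-stable on quasi-uniform meshes under \ref{A5}.)

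Substituting this into the negative term gives
\begin{align*}
\bigl(\tfrac{1}{2} + \|\bfA\|_{L^\infty(\Omega)}^2\bigr)\|w\|_{L^2(\Omega)}^2 \le C\,h^2\,\bigl(\tfrac{1}{2} + \|\bfA\|_{L^\infty(\Omega)}^2\bigr)\|\nabla w\|_{L^2(\Omega)}^2 = C\,h^2\kappa^2\,\bigl(\tfrac{1}{2} + \|\bfA\|_{L^\infty(\Omega)}^2\bigr)\,\tfrac{1}{\kappa^2}\|\nabla w\|_{L^2(\Omega)}^2,
\end{align*}
so that if $h^2\kappa^2 \le C_{\mbox{\scriptsize res}}^2$ with $C_{\mbox{\scriptsize res}}$ chosen so that $C\,C_{\mbox{\scriptsize res}}^2\,(\tfrac12 + \|\bfA\|_{L^\infty}^2) \le \tfrac14$, the subtracted quantity is bounded by $\tfrac14\|\nabla w\|_{L^2}^2/\kappa^2 \le \tfrac14\|w\|_{H^1_\kappa(\Omega)}^2$. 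Combining with the Gårding inequality leaves $\abeta(w,w) \ge \tfrac14\|w\|_{H^1_\kappa(\Omega)}^2 + \beta\|w\|_{L^2(\Omega)}^2$ for all $w \in W$, which is exactly the claim. The resolution constant $C_{\mbox{\scriptsize res}}$ depends only on $\bfA$ (through $\|\bfA\|_{L^\infty}$), on $\Omega$ and on the mesh-regularity constants entering the $L^2$-projection estimate, as asserted; note the $\beta$-contribution simply passes through unchanged since the absorption only touches the $H^1_\kappa$ part.

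The only real point requiring care — the \emph{main obstacle} — is justifying the estimate $\|w\|_{L^2(\Omega)} \lesssim h\|\nabla w\|_{L^2(\Omega)}$ on $W$ with a constant that is genuinely independent of $h$, $\kappa$ and $\beta$. This is standard for the $L^2$-orthogonal projection onto $\mathbb{P}^1$-Lagrange elements on shape-regular quasi-uniform meshes (it follows from local approximation of $w$ by its elementwise average combined with a Poincaré/Bramble--Hilbert estimate, using that $\pi_h w = 0$ forces the global $L^2$-best approximation to vanish), and under assumption \ref{A5} all the constants involved are controlled by the mesh-regularity parameters alone. Everything else is a direct recombination of inequalities already present in the excerpt, so no further genuinely new work is needed.
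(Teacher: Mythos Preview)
Your proposal is correct and follows essentially the same route as the paper: both start from the G{\aa}rding-type lower bound derived in the proof of Lemma~\ref{Lemma_2.1} and then absorb the negative $L^2$-term via the approximation estimate $\|w\|_{L^2(\Omega)} = \|w-\pi_h w\|_{L^2(\Omega)} \le C_{\pi_h} h \|\nabla w\|_{L^2(\Omega)}$ valid for $w\in W$, choosing $C_{\mbox{\normalfont\tiny res}}$ so that this term is bounded by $\tfrac{1}{4}$ of the gradient contribution. The only cosmetic difference is that the paper absorbs into $\tfrac{1}{2\kappa^2}\|\nabla w\|_{L^2}^2$ before forming the $H^1_\kappa$-norm, whereas you absorb into the assembled $\tfrac{1}{2}\|w\|_{H^1_\kappa}^2$ afterwards; the arithmetic and the resulting constant are the same.
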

\begin{proof}
Analogously as in the proof of Lemma \ref{Lemma_2.1}, we have for all $w\in W$
\begin{align*}
\int_{\Omega} | \tfrac{\ci}{\kappa} \nabla w + \bfA w |^2
%| 
 \d x \ge 
\tfrac{1}{2} \tfrac{1}{\kappa^{2}} \| \nabla w \|_{L^2(\Omega)}^2 + \tfrac{1}{2} \| w \|_{L^2(\Omega)}^2 - (
\| \bfA \|_{L^{\infty}(\Omega)}^2 + \tfrac{1}{2}) \|w \|_{L^2(\Omega)}^2.
\end{align*}
Using $\pi_hw=0$ and the approximation estimate $\| \pi_hw - w\|_{L^2(\Omega)} \le C_{\pi_h} \, h \, \| \nabla w \|_{L^2(\Omega)}$, we get
\begin{align*}
\int_{\Omega} | \tfrac{\ci}{\kappa} \nabla w + \bfA w |^2 
 \d x \ge 
\tfrac{1}{2} \left( \kappa^{-2} - 2  C_{\pi_h}^2 h^2 \| \bfA \|_{L^{\infty}(\Omega)}^2 - h^2  C_{\pi_h}^2 \right) \| \nabla w \|_{L^2(\Omega)}^2 + \tfrac{1}{2} \| w \|_{L^2(\Omega)}^2.
\end{align*}
Hence, if $C_{\mbox{\normalfont\tiny res}}>0$ is such that $\left(  4  C_{\pi_h}^2  \| \bfA \|_{L^{\infty}(\Omega)}^2 + 2 C_{\pi_h}^2 \right) \le C_{\mbox{\normalfont\tiny res}}^{-2}$, we obtain
\begin{align*}
\kappa^{-2} - 2  C_{\pi_h}^2 h^2 \| \bfA \|_{L^{\infty}(\Omega)}^2 - h^2  C_{\pi_h}^2 \ge \kappa^{-2} \left( 1 - 2  C_{\pi_h}^2 C_{\mbox{\normalfont\tiny res}}^{2} \| \bfA \|_{L^{\infty}(\Omega)}^2 - C_{\pi_h}^2 C_{\mbox{\normalfont\tiny res}}^{2} \right) 
\ge \tfrac{1}{2} \kappa^{-2}.
\end{align*}
This yields
\begin{align*}
\abeta( w , w) \ge \tfrac{1}{4} \tfrac{1}{\kappa^{2}} \| \nabla w \|_{L^2(\Omega)}^2 + \tfrac{1}{2} \| w \|_{L^2(\Omega)}^2 +  \beta\, \| w \|_{L^2(\Omega)}^2 \ge \frac{1}{4} \| w \|_{H^1_{\kappa}(\Omega)}^2 +  \beta\, \| w \|_{L^2(\Omega)}^2.\\[-3.0em]
\end{align*}
\end{proof}
From now on we assume $h \le C_{\mbox{\normalfont\tiny res}} \kappa^{-1}$ and regard arbitrary $\beta \ge 0$. We fix this in an assumption.
\begin{enumerate}[resume,label={(A\arabic*)}]
\item\label{A6} Let $C_{\mbox{\normalfont\tiny res}}>0$ denote the generic constant from Lemma \ref{lemma-abeta-coercive-on-W}. We assume that $h \le C_{\mbox{\normalfont\tiny res}} \kappa^{-1}$.
\end{enumerate}
\subsection{Definition of $\Vlod$ through element correctors}
In this section we present the traditional characterization of $\Vlod$ which goes back to \cite{MaP14}. In fact, as pointed out in \cite{AHP21Acta,HaP23}, the ideal space $\Vlod = \Ainvbeta V_h$ can be equivalently defined as the $\abeta(\cdot,\cdot)$-orthogonal complement of the kernel of the $L^2$-projection, provided that we can interpret $\abeta(\cdot,\cdot)$ as an inner product on $H^1(\Omega)$. For the sake of the geometric argument, let us for a moment assume that $\beta$ is indeed sufficiently large so that $\abeta(\cdot,\cdot)$ is an inner product. In this case $\Ainvbeta v_h$ exists for any $v_h \in V_h$ and $\Vlod := \Ainvbeta V_h$ is well defined. Now, let $w \in W=\mbox{kern}\,\pi_h \vert_{H^1(\Omega)}$ and $\Ainvbeta v_h \in \Vlod$ be arbitrary, then
\begin{align*}
\abeta( \Ainvbeta v_h , w )= (v_h, w)_{L^2(\Omega)} = 0.
\end{align*}
Hence, $\Vlod \subset \Wbot$, where $\Wbot$ denotes the $\abeta(\cdot,\cdot)$-orthogonal complement of $W$. To see that $\Vlod = \Wbot$ let $\mathcal{C} : H^1(\Omega) \rightarrow W$ be a corrector given by
\begin{align}
\label{def-ideal-corrector}
\abeta ( \mathcal{C}v , w ) = \abeta ( v , w ) \qquad \mbox{for all } w \in W.
\end{align}
Then $\Wbot = (\id - \mathcal{C})H^1(\Omega)$ and since $H^1(\Omega)=W \oplus V_h$ (direct sum of kernel and image of $\pi_h $) and $(\id - \mathcal{C})W=0$ we obtain $\Wbot = (\id - \mathcal{C}) V_h$. A comparison of the dimensions yields
\begin{align}
\label{characterizationVhLOD}
\Vlod=(\id - \mathcal{C})V_h.
\end{align}
Consequently, the LOD space can be characterized in an alternative way. However, note here that the corrector $\mathcal{C}$ in \eqref{def-ideal-corrector} is well-defined, even for $\beta=0$, thanks to Lemma \ref{lemma-abeta-coercive-on-W}. Hence, the space $\Vlod:=(\id - \mathcal{C})V_h$ is well-defined without imposing additional conditions on $\beta$. For that reason, we work from now on exactly with this characterization of $\Vlod$.\\[0.3em] 
To assemble $\Vlod$ according to that definition, we would practically have to compute $(\id - \mathcal{C})\varphi_z$ for a nodal basis function $\varphi_z$ of $V_h$. However, this structure is not ideal for a localized computation due to a numerical pollution effect related to a broken partition of unity after the localization, cf. \cite{HeP13}. For that reason, the computation of $ \mathcal{C}$ is split into element contributions for each $T\in \mathcal{T}_h$. To be precise, we define 
\begin{align}
\label{definition-CT}
\mathcal{C}_T : V_h \rightarrow W \quad \mbox{by} \quad\qquad \abeta ( \mathcal{C}_Tv_h , w ) = a_{\beta,T} ( v_h , w ) \qquad \mbox{for all } w \in W,
\end{align}
where $a_{\beta,T} ( \cdot , \cdot )$ is the restriction of $\abeta(\cdot,\cdot)$ to the element $T$:
\begin{align*}
a_{\beta,T}(v,w):= (\tfrac{\ci}{\kappa} \nabla v + \bfA\,v,\tfrac{\ci}{\kappa} \nabla w + \bfA\,w)_{L^2(T)} +  \beta\, (v,w)_{L^2(T)}.
\end{align*}
Note here that analogously to the arguments in Section \ref{subsection:first-second-order-minimality-conditions}, we can drop the real parts in front of the integrals in \eqref{definition-CT} and consider the full complex $L^2$-inner products.
Thanks to the coercivity in Lemma \ref{lemma-abeta-coercive-on-W} and since $W$ is closed in $H^1(\Omega)$, the operators $\mathcal{C}_T$ are well-posed. Furthermore, it is easily seen that the element operators $\mathcal{C}_T$ decompose the global corrector $\mathcal{C}$ as
\begin{align*}
\mathcal{C} = \sum_{T \in \mathcal{T}_H} \mathcal{C}_T.
\end{align*}
The operators $\mathcal{C}_T$ are also called {\it element correctors}, because $ \mathcal{C}$ is the global correction of a standard shape function $\varphi_z$, i.e., a nodal basis function $\varphi_z \in V_h$ is corrected to a basis function $\varphi_z - \mathcal{C} \varphi_z$ of $V_h^{\LOD}$. The motivation behind the element correctors $\mathcal{C}_T$ is that problem \eqref{definition-CT} admits a local right hand side (i.e. it is only supported on a single element $T$). As we will see later, this locality of the source ensures that the corresponding solution $\mathcal{C}_T v_h$ (for any $v_h \in V_h$) decays exponentially to zero outside of $T$. This will justify the truncation of $\mathcal{C}_T v_h$ to a small patch around $T$. We will return to this aspect in Section \ref{section:localization-LOD-basis}.\\[0.3em]
We conclude this subsection by formally fixing the definition of $\Vlod$ using element correctors.
\begin{definition}[$\abeta(\cdot,\cdot)$-based LOD space] \label{def_lod}
For $\beta\ge 0$, let $\abeta(\cdot,\cdot)$ denote the bilinear form given by \eqref{bilinear_forms} and let $\mathcal{C}_T : V_h \rightarrow W$ denote the (well-posed) element correctors defined in \eqref{definition-CT}. The corresponding LOD space is defined by
\begin{align*}
\Vlod  := (\id - \mathcal{C})V_h  =  \{ v_h - \sum_{T \in \mathcal{T}_H} \mathcal{C}_T  v_h \,| \,\,  v_h \in V_h \}.
\end{align*}
Recall here $\mathcal{C} = \sum\limits_{T \in \mathcal{T}_H} \mathcal{C}_T$ as the global corrector from \eqref{def-ideal-corrector} and $V_h$ given by \eqref{def-Vh}.
\end{definition}
\subsection{Basic approximation properties of $\Vlod$}
Following standard arguments (cf. \cite{AHP21Acta}), we can prove the approximation properties of $\Vlod$. However, we need to give some attention to the aspect that $\abeta(\cdot,\cdot)$ could be singular on $H^1(\Omega)$.
\begin{lemma}
\label{lemma:bestapprox-LOD}
Assume \ref{A1}-\ref{A3}, \ref{A5}-\ref{A6} and let $\beta \ge 0$ be arbitrary. We consider $\Vlod$ from Definition \ref{def_lod}. Let $z \in H^1(\Omega)$ fulfill 
\begin{align}
\label{equation-z-abeta}
\abeta(z , w ) = (f ,w)_{L^2(\Omega)} \qquad \mbox{ for all } w \in H^1(\Omega) 
\end{align}
for some $f \in L^2(\Omega)$. Then it holds
\begin{align}
 \label{eq3.1}
\inf_{v_h^\LOD \in \Vlod} \| z - v_h^\LOD \|_{H^1_{\kappa}(\Omega)} 
\,\,\,\le\,\,\, \| z - (\id - \mathcal{C}) \pi_hz \|_{H^1_{\kappa}(\Omega)} 
\,\,\,\lesssim\,\,\,  h \,\kappa \, \| f  -\pi_h f \|_{L^2(\Omega)}.
\end{align}
The hidden constant in the estimate is generic and only depends on the mesh regularity. In particular, it does not depend on $h$, $\kappa$, $\beta$ or $\bfA$.
\end{lemma}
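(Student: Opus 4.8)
The plan is to verify the middle (approximation) identity first and then establish the key estimate on the right. For the middle inequality, I observe that $\pi_h z \in V_h$, so by Definition \ref{def_lod} we have $(\id - \mathcal{C})\pi_h z \in \Vlod$; hence the infimum over $\Vlod$ is bounded by $\| z - (\id - \mathcal{C})\pi_h z \|_{H^1_\kappa(\Omega)}$. This is the standard "quasi-interpolation into the LOD space" trick and requires no computation beyond recalling the definition.

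The heart of the matter is the bound $\| z - (\id - \mathcal{C})\pi_h z \|_{H^1_\kappa(\Omega)} \lesssim h\kappa \|f - \pi_h f\|_{L^2(\Omega)}$. First I would rewrite $z - (\id-\mathcal{C})\pi_h z = (z - \pi_h z) + \mathcal{C}\pi_h z$. Then I claim this quantity equals $(\id - \mathcal{C})(z - \pi_h z)$: indeed $z - \pi_h z \in W$ since $\pi_h(z-\pi_h z)=0$, and on $W$ the corrector $\mathcal{C}$ acts as the identity (because $\abeta(\mathcal{C}v,w) = \abeta(v,w)$ for all $w\in W$ means $\mathcal{C}w = w$ when $w \in W$, using coercivity of $\abeta$ on $W$ from Lemma \ref{lemma-abeta-coercive-on-W}). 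Therefore $\mathcal{C}(z-\pi_h z) = z - \pi_h z$ and $(\id-\mathcal{C})(z-\pi_h z) = 0$. Wait — that gives zero, which cannot be right; so instead the correct decomposition must go the other way. Let me reconsider: $z - (\id-\mathcal{C})\pi_h z = (z - \pi_h z) + \mathcal{C}\pi_h z$, and since $z - \pi_h z \in W$, applying $\mathcal{C}$ shows $\mathcal{C}z = (z - \pi_h z) + \mathcal{C}\pi_h z$, i.e. $z - (\id-\mathcal{C})\pi_h z = \mathcal{C}z$. So the quantity to estimate is exactly $\mathcal{C}z$, the ideal corrector of $z$ itself. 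Now I use the defining equation \eqref{equation-z-abeta} for $z$ together with the Galerkin orthogonality built into $\mathcal{C}$: for any $w \in W$, $\abeta(z - \mathcal{C}z, w) = \abeta(z,w) - \abeta(z,w) = 0$, so $z - \mathcal{C}z$ is the $\abeta$-projection onto $\Wbot$. Testing $\abeta(\mathcal{C}z, \mathcal{C}z) = \abeta(z, \mathcal{C}z) = (f, \mathcal{C}z)_{L^2(\Omega)}$, and since $\mathcal{C}z \in W$ we have $\pi_h \mathcal{C}z = 0$, hence $(f,\mathcal{C}z)_{L^2} = (f - \pi_h f, \mathcal{C}z)_{L^2}$. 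Applying Cauchy--Schwarz, the $L^2$-approximation estimate $\|\mathcal{C}z\|_{L^2(\Omega)} \le C_{\pi_h} h \|\nabla \mathcal{C}z\|_{L^2(\Omega)}$ (valid since $\mathcal{C}z \in W = \ker \pi_h$), and the coercivity $\abeta(\mathcal{C}z,\mathcal{C}z) \ge \tfrac14 \|\mathcal{C}z\|_{H^1_\kappa(\Omega)}^2 \ge \tfrac{1}{4\kappa^2}\|\nabla \mathcal{C}z\|_{L^2(\Omega)}^2$ from Lemma \ref{lemma-abeta-coercive-on-W}, I get $\tfrac{1}{4\kappa^2}\|\nabla \mathcal{C}z\|_{L^2}^2 \le C_{\pi_h} h \|f - \pi_h f\|_{L^2} \|\nabla \mathcal{C}z\|_{L^2}$, which rearranges to $\|\nabla \mathcal{C}z\|_{L^2(\Omega)} \lesssim h \kappa^2 \|f - \pi_h f\|_{L^2(\Omega)}$ and in turn $\|\mathcal{C}z\|_{H^1_\kappa(\Omega)}^2 \le 4\abeta(\mathcal{C}z,\mathcal{C}z) \lesssim h\kappa \cdot \tfrac1\kappa\|\nabla\mathcal{C}z\|_{L^2}\cdot\ldots$; tracking the $\kappa$-weights carefully yields $\|\mathcal{C}z\|_{H^1_\kappa(\Omega)} \lesssim h\kappa \|f - \pi_h f\|_{L^2(\Omega)}$.

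The step I expect to be the main obstacle is the careful bookkeeping of the $\kappa$-powers through the $\kappa$-weighted norm $\|\cdot\|_{H^1_\kappa}$, since the coercivity constant, the $L^2$--$H^1$ interpolation estimate, and the test-function insertion each contribute factors of $\kappa$ that must combine to exactly $h\kappa$ and no worse; getting a spurious extra $\kappa$ here would weaken the whole point of the LOD construction. A secondary subtlety is making sure the argument never requires $\abeta$ to be an inner product on all of $H^1(\Omega)$ — every use of coercivity must be confined to $W$, where Lemma \ref{lemma-abeta-coercive-on-W} applies under \ref{A6}, so that the result genuinely holds for all $\beta \ge 0$ including $\beta = 0$.
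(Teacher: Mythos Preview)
Your argument is correct. After the brief false start you recover the key identity $z-(\id-\mathcal{C})\pi_h z=\mathcal{C}z\in W$, test the defining equation of $\mathcal{C}$ against $\mathcal{C}z$ to obtain $\abeta(\mathcal{C}z,\mathcal{C}z)=(f-\pi_hf,\mathcal{C}z)_{L^2(\Omega)}$, and then combine the coercivity of $\abeta$ on $W$ (Lemma~\ref{lemma-abeta-coercive-on-W}) with the approximation bound $\|\mathcal{C}z\|_{L^2(\Omega)}\le C_{\pi_h}h\kappa\|\mathcal{C}z\|_{H^1_\kappa(\Omega)}$ for $W$-functions. The $\kappa$-bookkeeping you flag as the main obstacle is in fact a one-liner once stated cleanly:
\[
\tfrac14\|\mathcal{C}z\|_{H^1_\kappa(\Omega)}^2\le\abeta(\mathcal{C}z,\mathcal{C}z)=(f-\pi_hf,\mathcal{C}z)_{L^2(\Omega)}\le C_{\pi_h}\,h\kappa\,\|f-\pi_hf\|_{L^2(\Omega)}\|\mathcal{C}z\|_{H^1_\kappa(\Omega)},
\]
and dividing by $\|\mathcal{C}z\|_{H^1_\kappa(\Omega)}$ finishes it.

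The paper reaches the same identity $\abeta(z-z_h^\LOD,z-z_h^\LOD)=(f-\pi_hf,z-z_h^\LOD)_{L^2(\Omega)}$ but handles coercivity differently: instead of invoking Lemma~\ref{lemma-abeta-coercive-on-W} directly on $W$, it shifts to the globally coercive form $a_{\beta+\Delta\beta}$ with $\Delta\beta=\tfrac12+\|\bfA\|_{L^\infty}^2$, then absorbs the resulting $\Delta\beta\|z-z_h^\LOD\|_{L^2}^2$ term back into the left-hand side using the same $L^2$-bound and the resolution condition~\ref{A6}. Your route is more direct: since the error $z-z_h^\LOD=\mathcal{C}z$ already lies in $W$, coercivity on $W$ applies immediately and the $\Delta\beta$-detour is unnecessary. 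Both proofs rely on~\ref{A6} (yours implicitly through Lemma~\ref{lemma-abeta-coercive-on-W}), and both make no use of coercivity on the full space, so your concern about the $\beta=0$ case is handled either way.
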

Note that Lemma \ref{lemma:bestapprox-LOD} does not require that problem \eqref{equation-z-abeta} is well-posed for any given right hand side $f \in L^2(\Omega)$.
\begin{proof}
For $z \in H^1(\Omega)$ we define $z_h^\LOD := (\id - \mathcal{C}) \pi_hz$ and observe that $\pi_h(z -z_h^\LOD) =0$, i.e., $z -z_h^\LOD \in W$. Consequently, with $\| \pi_hw - w\|_{L^2(\Omega)} \le C_{\pi_h} \, h \, \| \nabla w \|_{L^2(\Omega)}$ for every $w \in H^1(\Omega)$, it follows
\begin{align}
\label{L2-z-zLOD}
\| z -z_h^\LOD \|_{L^2(\Omega)} \le C_{\pi_h} \, h \,\kappa \, \|  z -z_h^\LOD \|_{H^1_{\kappa}(\Omega)}.
\end{align}
Now let $\Delta \beta>0$ be large enough so that $a_{\beta + \Delta \beta}(\cdot,\cdot)$ fulfills the coercivity in Lemma \ref{Lemma_2.1}. Note that $\Delta \beta  =\tfrac{1}{2}+  \| \bfA \|_{L^\infty(\Omega)}^2$ is a sufficient condition. We obtain
\begin{eqnarray*} 
\lefteqn{ \tfrac{1}{2} \Vert z -z_h^\LOD \Vert^2_{H^1_\kappa(\Omega)} 
\,\,\,\le\,\,\, a_{\beta + \Delta \beta}( z -z_h^\LOD,z -z_h^\LOD) } \\
&\le& a_{\beta}( z -z_h^\LOD,z -z_h^\LOD) 
+  \Delta \beta \, \| z -z_h^\LOD \|_{L^2(\Omega)}^2 \\
&=& (f , z -z_h^\LOD)_{L^2(\Omega)} - a_{\beta}( (\id - \mathcal{C}) \pi_h z ,z -z_h^\LOD) +  \Delta \beta \, \| z -z_h^\LOD \|_{L^2(\Omega)}^2 \\
&\overset{\eqref{def-ideal-corrector}}{=}&  (f  -\pi_h f , z -z_h^\LOD)_{L^2(\Omega)} +  \Delta \beta \, \| z -z_h^\LOD \|_{L^2(\Omega)}^2 \\
&\overset{\eqref{L2-z-zLOD}}{\le}&  C_{\pi_h} \, h \,\kappa \, \|  z -z_h^\LOD \|_{H^1_{\kappa}(\Omega)} \| f  -\pi_h f \|_{L^2(\Omega)} +  \Delta \beta \, C_{\pi_h}^2 \, h^2 \,\kappa^2 \, \|  z -z_h^\LOD \|_{H^1_{\kappa}(\Omega)}^2.
\end{eqnarray*} 
If $h$ is such that $\Delta \beta \, C_{\pi_h}^2 \, h^2 \,\kappa^2 \le \frac{1}{4}$ we can conclude
\begin{align*}
\Vert z -z_h^\LOD \Vert_{H^1_\kappa(\Omega)} 
\,\,\,\le\,\,\, 4  C_{\pi_h} \, h \,\kappa \, \| f  -\pi_h f \|_{L^2(\Omega)}.
\end{align*}
Thus, the proof is finished if we verify that $\Delta \beta \, C_{\pi_h}^2 \, h^2 \,\kappa^2 \le \frac{1}{4}$ is fulfilled. Recalling assumption \ref{A6}, i.e., $h \le C_{\mbox{\normalfont\tiny res}} \kappa^{-1}$, with $C_{\mbox{\normalfont\tiny res}}^{-2} \ge 4  C_{\pi_h}^2  \| \bfA \|_{L^\infty(\Omega)}^2 + 2 C_{\pi_h}^2$
(see Lemma \ref{lemma-abeta-coercive-on-W} and its proof) we have with the choice $\Delta \beta=\tfrac{1}{2}+  \| \bfA \|_{L^\infty(\Omega)}^2$ 
\begin{align*}
4 \Delta \beta \, C_{\pi_h}^2 h^2 = 
( 4 \| \bfA \|_{L^\infty(\Omega)}^2 + 2 ) C_{\pi_h}^2 h^2\le C_{\mbox{\normalfont\tiny res}}^{-2} h^2 \le  \kappa^{-2}.
\end{align*}
\end{proof}
Next, we consider a particular Ritz-projection onto the LOD-space which we exploit in our error analysis. For the fixed choice $\tilde{\beta}:=\tfrac{1}{2}+  \| \bfA \|_{L^\infty(\Omega)}^2$ we let 
$$
\RitzLOD : H^1(\Omega) \rightarrow \Vlod
$$
be defined for $v\in H^1(\Omega)$ by
\begin{align}
\label{ritz-projection}
a_{\tilde{\beta}} ( \RitzLOD v , w_h^{\LOD} ) = a_{\tilde{\beta}} ( v , w_h^{\LOD} ) 
\qquad
\mbox{for all } w_h^{\LOD} \in \Vlod.
\end{align}
The projection is well-defined by Lemma \ref{Lemma_2.1}.  Note that the $\beta$ used to construct $\Vlod$ is typically not the same as $\tilde{\beta}$. We are hence simultaneously dealing with two different bilinear forms, $\abeta(\cdot,\cdot)$ and $a_{\tilde{\beta}}(\cdot,\cdot)$.

As a direct conclusion from Lemma \ref{lemma:bestapprox-LOD}, we have the following approximation properties for $\RitzLOD$. 
\begin{lemma}
\label{lemma:estimatesRitzprojectionLOD}
Assume \ref{A1}-\ref{A3} and \ref{A5}-\ref{A6}. We consider the LOD-space $\Vlod$ (constructed from $\abeta(\cdot,\cdot)$ for any  $\beta \ge 0$) and let $\RitzLOD : H^1(\Omega) \rightarrow \Vlod$ be the Ritz-projection defined in \eqref{ritz-projection}. Then, for any function $z \in H^1(\Omega)$
with the characterization from Lemma \ref{lemma:bestapprox-LOD}, it holds
\begin{align}
\label{RitzLOD-H1kappa-est}
\Vert z - \RitzLOD z \Vert_{H^1_\kappa(\Omega)} \lesssim  h \,\kappa \, \| f  -\pi_h f \|_{L^2(\Omega)}
\end{align}
and
\begin{align}
\label{RitzLOD-L2-est}
 \| z - \RitzLOD z  \|_{L^2(\Omega)} \lesssim  h\, \kappa\, \sqrt{1+ \beta} \, \|  z - \RitzLOD z\|_{H^1_{\kappa}(\Omega)}.
\end{align}
\end{lemma}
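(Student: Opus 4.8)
\textbf{Proof proposal for Lemma \ref{lemma:estimatesRitzprojectionLOD}.}

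The plan is to derive the two estimates separately, with the first being an almost immediate consequence of Lemma \ref{lemma:bestapprox-LOD} combined with the fact that $\RitzLOD$ is a Galerkin projection with respect to $a_{\tilde\beta}(\cdot,\cdot)$, and the second being an Aubin--Nitsche-type duality argument.

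For \eqref{RitzLOD-H1kappa-est}, I would start from the quasi-optimality of $\RitzLOD$ in the $a_{\tilde\beta}$-induced norm. Since $a_{\tilde\beta}(\cdot,\cdot)$ is continuous and coercive on $H^1(\Omega)$ by Lemma \ref{Lemma_2.1} (this is the reason $\tilde\beta$ was chosen that way), Céa's lemma gives $\|z - \RitzLOD z\|_{H^1_\kappa(\Omega)} \lesssim \inf_{w_h^\LOD \in \Vlod} \|z - w_h^\LOD\|_{H^1_\kappa(\Omega)}$, where the hidden constant is the ratio of continuity and coercivity constants of $a_{\tilde\beta}$ and is independent of $\kappa, h, \beta$ (note $\tilde\beta$ depends only on $\bfA$). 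Then Lemma \ref{lemma:bestapprox-LOD} bounds that infimum by $h\kappa\|f - \pi_h f\|_{L^2(\Omega)}$, which closes \eqref{RitzLOD-H1kappa-est}. One subtlety worth a sentence: the continuity constant of $a_{\tilde\beta}$ in the $\|\cdot\|_{H^1_\kappa}$-norm involves the $L^2$-part $\tilde\beta\|\cdot\|_{L^2}$, but since $\|\cdot\|_{L^2(\Omega)} \le \|\cdot\|_{H^1_\kappa(\Omega)}$ and $\tilde\beta$ is a fixed constant depending only on $\bfA$, this is harmless.

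For the $L^2$-estimate \eqref{RitzLOD-L2-est}, I would run a duality argument. Let $e := z - \RitzLOD z$ and let $\phi \in H^1(\Omega)$ solve the adjoint-type problem $a_{\tilde\beta}(v, \phi) = (e, v)_{L^2(\Omega)}$ for all $v \in H^1(\Omega)$ (well-posed by Lemma \ref{Lemma_2.1}); by symmetry of $a_{\tilde\beta}$ this is just $a_{\tilde\beta}(\phi, v) = (e,v)_{L^2(\Omega)}$. Then $\|e\|_{L^2(\Omega)}^2 = a_{\tilde\beta}(\phi, e) = a_{\tilde\beta}(\phi - \RitzLOD\phi, e)$ using Galerkin orthogonality of $\RitzLOD$ (note $\phi$ has exactly the structure required in Lemma \ref{lemma:bestapprox-LOD}, with right-hand side $e$, so $\RitzLOD\phi$ makes sense and the orthogonality $a_{\tilde\beta}(v - \RitzLOD v, w_h^\LOD)=0$ applies with $v = z$ and $w_h^\LOD = \RitzLOD \phi$). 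Continuity of $a_{\tilde\beta}$ then gives $\|e\|_{L^2(\Omega)}^2 \lesssim \|\phi - \RitzLOD\phi\|_{H^1_\kappa(\Omega)}\,\|e\|_{H^1_\kappa(\Omega)}$, and \eqref{RitzLOD-H1kappa-est} applied to $\phi$ (whose data is $f_\phi = e$) bounds $\|\phi - \RitzLOD\phi\|_{H^1_\kappa(\Omega)} \lesssim h\kappa\|e - \pi_h e\|_{L^2(\Omega)} \le h\kappa\|e\|_{L^2(\Omega)}$. Dividing by $\|e\|_{L^2(\Omega)}$ yields $\|e\|_{L^2(\Omega)} \lesssim h\kappa\,\|e\|_{H^1_\kappa(\Omega)}$, which is even slightly stronger than the claimed bound (one can simply bound $1 \le \sqrt{1+\beta}$ to match the stated form, which is presumably written that way for consistency with later $\beta$-dependent estimates).

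The main obstacle, if any, is purely bookkeeping: one must check carefully that the function $\phi$ produced by the duality problem falls under the hypotheses of Lemma \ref{lemma:bestapprox-LOD} so that \eqref{RitzLOD-H1kappa-est} is legitimately applicable to it — i.e., that $\phi$ solves $a_{\tilde\beta}(\phi, w) = (f_\phi, w)_{L^2}$ for all $w \in H^1(\Omega)$ with $f_\phi \in L^2(\Omega)$, which here holds with $f_\phi = e \in L^2(\Omega)$. Everything else reduces to the standard Céa/Aubin--Nitsche template, with the only quantitative input being the $h\kappa$ factor already supplied by Lemma \ref{lemma:bestapprox-LOD} and the $\kappa$-independence of the $a_{\tilde\beta}$ continuity/coercivity constants. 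I would also remark that no smallness of $h$ beyond \ref{A6} is needed, since $\RitzLOD$ and $\pi_h$ are genuine projections and the duality closes without an absorption step.
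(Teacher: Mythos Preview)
Your argument for \eqref{RitzLOD-H1kappa-est} is correct and matches the paper's proof exactly (C\'ea's lemma for $a_{\tilde\beta}$, then Lemma~\ref{lemma:bestapprox-LOD}).

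For \eqref{RitzLOD-L2-est}, however, there is a genuine gap. You set up the duality correctly, but you misidentify the source term for $\phi$. Lemma~\ref{lemma:bestapprox-LOD} --- and hence estimate \eqref{RitzLOD-H1kappa-est} --- requires the function to satisfy $a_{\beta}(\cdot\,,w)=(f,w)_{L^2(\Omega)}$, i.e.\ the bilinear form with the parameter $\beta$ that was used to \emph{construct} $\Vlod$ (this is where the orthogonality $a_\beta((\id-\mathcal{C})\pi_h z,W)=0$ in the proof of Lemma~\ref{lemma:bestapprox-LOD} comes from). Your dual function solves $a_{\tilde\beta}(\phi,v)=(e,v)_{L^2(\Omega)}$, which is \emph{not} of that form unless $\beta=\tilde\beta$. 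Rewriting, one has
\[
a_\beta(\phi,v)=a_{\tilde\beta}(\phi,v)+(\beta-\tilde\beta)(\phi,v)_{L^2(\Omega)}=(e+(\beta-\tilde\beta)\phi,\,v)_{L^2(\Omega)},
\]
so the correct data is $f_\phi = e + (\beta-\tilde\beta)\phi$, not $f_\phi=e$. The extra term contributes $|\beta-\tilde\beta|\,\|\phi-\pi_h\phi\|_{L^2(\Omega)}\lesssim (1+\beta)\,h\kappa\,\|\phi\|_{H^1_\kappa(\Omega)}\lesssim (1+\beta)\,h\kappa\,\|e\|_{L^2(\Omega)}$, and after combining with the first term and taking the square root one obtains exactly the $\sqrt{1+\beta}$ factor in the statement. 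Your ``slightly stronger'' bound without any $\beta$-dependence is therefore not justified by the argument as written; the $\sqrt{1+\beta}$ is not cosmetic but reflects the mismatch between the bilinear form defining the LOD space and the one defining the Ritz projection.
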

\begin{proof}
The coercivity and continuity of $a_{\tilde{\beta}} ( \cdot , \cdot )$ from Lemma \ref{Lemma_2.1} imply with C\'ea's lemma 
\begin{align*}
\Vert z - \RitzLOD z \Vert_{H^1_\kappa(\Omega)} \lesssim \inf_{v_h^\LOD \in \Vlod} \| z - v_h^\LOD \|_{H^1_{\kappa}(\Omega)} 
\,\,\,\overset{\eqref{eq3.1}}{\lesssim}\,\,\,  h \,\kappa \, \| f  -\pi_h f \|_{L^2(\Omega)}.
\end{align*}
The hidden constants here depend on $\bfA$, but not on $\kappa$, $h$ and $\beta$. For the $L^2$-estimate we use an Aubin-Nitsche argument and consider the adjoint problem by seeking $\xi \in H^1(\Omega)$ with
\begin{align*}
a_{\tilde{\beta}} ( v , \xi ) = ( v , z - \RitzLOD z  )_{L^2(\Omega)} \qquad \mbox{for all } v\in H^1(\Omega).
\end{align*}
Note that $\xi$ fulfils equation \eqref{equation-z-abeta} with $f_{\xi}:= z - \RitzLOD z  + (\beta-\tilde{\beta}) \xi $.  We obtain
\begin{eqnarray*}
\lefteqn{ \| z - \RitzLOD z  \|_{L^2(\Omega)}^2 \,\,\,=\,\,\, a_{\tilde{\beta}} (  z - \RitzLOD z  , \xi ) 
\,\,\,=\,\,\, a_{\tilde{\beta}} (  z - \RitzLOD z  , \xi - (\id - \mathcal{C}) \pi_h\xi ) }\\
&\overset{\eqref{eq3.1}}{\lesssim}&  \|  z - \RitzLOD z\|_{H^1_{\kappa}(\Omega)}\,  h\, \kappa\, 
\| f_{\xi}  -\pi_h f_{\xi} \|_{L^2(\Omega)} \\
&\lesssim&   \|  z - \RitzLOD z\|_{H^1_{\kappa}(\Omega)}^2\,  h^2\, \kappa^2\, 
+  \|  z - \RitzLOD z\|_{H^1_{\kappa}(\Omega)}\,  h\, \kappa\, |\beta -\tilde{\beta} |\,\| \xi - \pi_h \xi \|_{L^2(\Omega)} \\
&\lesssim& h^2\, \kappa^2\,  \left(  \|  z - \RitzLOD z\|_{H^1_{\kappa}(\Omega)}^2  
+  \|  z - \RitzLOD z\|_{H^1_{\kappa}(\Omega)}\, |\beta -\tilde{\beta} |\,\| \xi \|_{H^1_{\kappa}(\Omega)} \right)  \\
&\lesssim& h^2\, \kappa^2\, (1+ \beta) \|  z - \RitzLOD z\|_{H^1_{\kappa}(\Omega)}^2,
\end{eqnarray*}
where in the last step, we used the stability estimate $\| \xi \|_{H^1_{\kappa}(\Omega)} \lesssim \| z - \RitzLOD z  \|_{L^2(\Omega)}$. Taking the square root on both sides of the above estimate finishes the proof.
\end{proof}
As pointed out in \cite[Lemma 6.3]{DoeHe23}, any function $z \in H^2(\Omega)$ with $\nabla z \cdot \mathbf{n} = 0$ on $\partial \Omega$ can be written as the solution of a problem \eqref{equation-z-abeta}, that is, there exists $f_z \in L^2(\Omega)$ such that 
\begin{align*}
\abeta(z , \varphi ) = (f_z ,\varphi)_{L^2(\Omega)} \qquad \mbox{ for all } \varphi \in H^1(\Omega) 
\end{align*}
and it holds
\begin{align*}
\| f_z \|_{L^2(\Omega)} \lesssim \| z \|_{H^2_{\kappa}(\Omega)} + \beta\, \| z \|_{L^2(\Omega)} .
\end{align*}
Note that $z$ can be in particular chosen as a minimizer $u$ because it fulfills $u \in H^2(\Omega)$ by Theorem \ref{estimates_u} and $\nabla u \cdot \mathbf{n}\vert_{\partial \Omega}=0$ by Remark \ref{boundary-condition}. With Lemma \ref{lemma:estimatesRitzprojectionLOD}, we further obtain the (in general suboptimal) estimate
\begin{align}
\label{general-approx-Ritz-H2-func}
\Vert z - \RitzLOD z \Vert_{H^1_\kappa(\Omega)} \lesssim  h \,\kappa \, ( \| z \|_{H^2_{\kappa}(\Omega)} + \beta\, \| z \|_{L^2(\Omega)} ).
\end{align}
Since the set of all functions $z\in H^2(\Omega)$ with $\nabla z \cdot \mathbf{n}\vert_{\partial \Omega}=0$ is dense in $H^1(\Omega)$ (on convex polygons, cf. \cite{Droniou}), we conclude from \eqref{general-approx-Ritz-H2-func} that $\{ \Vlod \}_{h>0}$ is a dense sequence of subspaces of $H^1(\Omega)$, i.e., for every $z \in H^1(\Omega)$ it holds
\begin{align}
\label{density-of-LOD-spaces}
\lim_{h\rightarrow 0} \,\inf_{z_h \in \Vlod} \| z -  z_h \|_{H^1(\Omega)} = 0.
\end{align} 
The density result \eqref{density-of-LOD-spaces} is a necessary assumption to apply certain abstract error estimates  for discrete minimizers. Before elaborating on that aspect, we first want to quantify the error between any minimizer $u$ and its Ritz-projection $\RitzLOD u$ on the LOD space.

\begin{lemma} \label{lem_ritz_proj_hk}
Assume \ref{A1}-\ref{A3}, \ref{A5}-\ref{A6} and let $u \in H^1(\Omega)$ denote a global minimizer of $E$. If $\Vlod$ denotes the LOD-space for $\abeta(\cdot,\cdot)$ (with $\beta \ge 0$ arbitrary) and $\RitzLOD : H^1(\Omega) \rightarrow \Vlod$ the corresponding Ritz-projection from \eqref{ritz-projection}. It holds 
\begin{align*}
\Vert u - \RitzLOD u \Vert_{H^1_\kappa(\Omega)} \lesssim  (1+\beta) \, \kappa^3 \,h^3
\qquad
\mbox{and}
\qquad
 \| u - \RitzLOD u  \|_{L^2(\Omega)} \lesssim  (1+\beta)^{3/2} \, \kappa^4 \,h^4.
\end{align*}
\end{lemma}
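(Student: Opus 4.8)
The plan is to exploit that the data function associated with the minimizer $u$ is far smoother than the generic bound used after Lemma~\ref{lemma:estimatesRitzprojectionLOD}, so that the projection error $\|f_u - \pi_h f_u\|_{L^2(\Omega)}$ picks up an extra factor of $h$. First I would identify this data explicitly: testing the Ginzburg--Landau equation \eqref{eq_GL} shows that for every $w \in H^1(\Omega)$
\[
\abeta(u,w) = (\tfrac{\i}{\kappa}\nabla u + \bfA u, \tfrac{\i}{\kappa}\nabla w + \bfA w)_{L^2(\Omega)} + \beta\,(u,w)_{L^2(\Omega)} = \big( (1+\beta-|u|^2)u,\, w \big)_{L^2(\Omega)},
\]
so $u$ satisfies \eqref{equation-z-abeta} with $f_u := (1+\beta-|u|^2)u \in L^2(\Omega)$, and hence both Lemma~\ref{lemma:bestapprox-LOD} and Lemma~\ref{lemma:estimatesRitzprojectionLOD} apply to $z=u$.

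The heart of the argument is to show $f_u \in H^2(\Omega)$ together with the $\kappa$-explicit bound $\|D^2 f_u\|_{L^2(\Omega)} \lesssim (1+\beta)\kappa^2$. Write $f_u = (1+\beta)u - |u|^2 u$. The nonlinearity $v \mapsto |v|^2v$ is a cubic polynomial in $(\Re v, \Im v)$, so its first and second derivatives are uniformly bounded on $\{|v|\le1\}$; combined with $u \in H^2(\Omega)$, the pointwise bound $|u|\le1$ and the chain rule one obtains $|u|^2 u \in H^2(\Omega)$ with the pointwise a.e.\ estimate $|D^2(|u|^2 u)| \lesssim |\nabla u|^2 + |D^2 u|$. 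Taking $L^2$-norms and invoking Theorem~\ref{estimates_u} --- precisely the bounds $\|\nabla u\|_{L^4(\Omega)} \lesssim \kappa$ (for the term $\||\nabla u|^2\|_{L^2(\Omega)} = \|\nabla u\|_{L^4(\Omega)}^2$) and $\|D^2 u\|_{L^2(\Omega)} \lesssim \kappa^2$ --- yields $\|D^2 f_u\|_{L^2(\Omega)} \le (1+\beta)\|D^2 u\|_{L^2(\Omega)} + C\big(\|\nabla u\|_{L^4(\Omega)}^2 + \|D^2 u\|_{L^2(\Omega)}\big) \lesssim (1+\beta)\kappa^2$. This is the step I expect to be the main obstacle: the cross term $|\nabla u|^2$ in $D^2(|u|^2u)$ is exactly why the $L^4$-bound on $\nabla u$ in Theorem~\ref{estimates_u} is needed and why the weaker $L^2$-bound would not suffice; some care is also required to justify the second-order chain rule for the composition with a function in $H^2(\Omega)\cap L^\infty(\Omega)$ in dimension $d\le3$ (where $H^2 \hookrightarrow W^{1,4}\cap C^0$).

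Finally I would assemble the estimates. Since $\pi_h$ is the $L^2$-projection onto the $\mathbb{P}^1$-space on a shape-regular mesh, the standard approximation property $\|g-\pi_hg\|_{L^2(\Omega)} \lesssim h^2\|D^2 g\|_{L^2(\Omega)}$ gives $\|f_u - \pi_h f_u\|_{L^2(\Omega)} \lesssim h^2\|D^2 f_u\|_{L^2(\Omega)} \lesssim (1+\beta)\kappa^2 h^2$. Plugging this into the $H^1_\kappa$-estimate \eqref{RitzLOD-H1kappa-est} of Lemma~\ref{lemma:estimatesRitzprojectionLOD} yields $\|u-\RitzLOD u\|_{H^1_\kappa(\Omega)} \lesssim h\kappa\,\|f_u-\pi_hf_u\|_{L^2(\Omega)} \lesssim (1+\beta)\kappa^3 h^3$, and inserting this into the $L^2$-estimate \eqref{RitzLOD-L2-est} gives $\|u-\RitzLOD u\|_{L^2(\Omega)} \lesssim h\kappa\sqrt{1+\beta}\,\|u-\RitzLOD u\|_{H^1_\kappa(\Omega)} \lesssim (1+\beta)^{3/2}\kappa^4 h^4$, which are the two claimed bounds.
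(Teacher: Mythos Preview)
Your proposal is correct and follows essentially the same approach as the paper: identify $f_u=(1+\beta-|u|^2)u$ via the Ginzburg--Landau equation, establish the $H^2$-bound $\|D^2 f_u\|_{L^2(\Omega)}\lesssim(1+\beta)\kappa^2$, and feed $\|f_u-\pi_hf_u\|_{L^2(\Omega)}\lesssim(1+\beta)\kappa^2h^2$ into Lemma~\ref{lemma:estimatesRitzprojectionLOD}. You in fact supply more detail than the paper on the $H^2$-regularity of $|u|^2u$ (the paper simply cites \cite{DoeHe23} for the bounds $|(\beta-|u|^2+1)u|_{H^s(\Omega)}\lesssim(1+\beta)\kappa^s$), and your observation that the $L^4$-bound on $\nabla u$ is precisely what controls the cross term $|\nabla u|^2$ is exactly the point.
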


\begin{proof}
	In order to apply Lemma \ref{lemma:estimatesRitzprojectionLOD}, we have to express $u$ as solution to a problem of the form $\abeta(u,\cdot)=(f ,v)_{L^2(\Omega)}$ and identify the corresponding source term. For that we use that $u$ solves the Ginzburg--Landau equation \eqref{eq_GL}, which allows us to conclude
	\begin{align*}
\abeta (u,w)  &= ( \tfrac{\i}{\kappa} \nabla u + \bfA u , \tfrac{\i}{\kappa} \nabla w + \bfA w)_{L^2(\Omega)} + \beta  ( u , w )_{L^2(\Omega)} =  (\, (\beta - \vert u \vert^2 +1) u , w )_{L^2(\Omega)}
\end{align*}
for all $w \in H^1(\Omega).$
Thanks to Theorem \ref{estimates_u} we have $(\beta - \vert u \vert^2 +1) u \in H^2(\Omega)$
with the following bounds (cf. \cite{DoeHe23}):
\begin{align*}
		| (\beta - \vert u \vert^2 +1) u |_{H^s(\Omega)} \lesssim (\beta \,  + 1) \kappa^s, \qquad \mbox{for } s=0,1,2.
\end{align*}
Here, $|\cdot|_{H^s(\Omega)}$ denotes the semi-norm on $H^s(\Omega)$ with the convention $|\cdot|_{H^0(\Omega)}=\| \cdot \|_{L^2(\Omega)}$. Applying Lemma \ref{lemma:estimatesRitzprojectionLOD} together with standard approximation estimates of the $L^2$-projection $\pi_h : H^1(\Omega) \rightarrow V_h$ yields
\begin{align*}
\Vert u - \RitzLOD u \Vert_{H^1_\kappa(\Omega)} &\lesssim  h \,\kappa \, \| (\id - \pi_h) \left((\beta - \vert u \vert^2 +1) u\right) \|_{L^2(\Omega)} \\
&\lesssim  h^3 \,\kappa \, \| (\beta - \vert u \vert^2 +1) u \|_{H^2(\Omega)}  \\
&\lesssim (\beta \,  + 1) \, h^3\,\kappa^3.
\end{align*}
The $L^2$-estimate follows from
\begin{align*}
 \| u - \RitzLOD u  \|_{L^2(\Omega)} \lesssim  h\, \kappa\, \sqrt{1+ \beta} \, \|  u - \RitzLOD u\|_{H^1_{\kappa}(\Omega)}.
\end{align*}
\end{proof}
We can now quantify the error between any discrete minimizer in the LOD-space, i.e., $u_{h}^{\LOD} \in \Vlod$ with
\begin{align}
\label{LOD-minimizer-def}
E(u_{h}^{\LOD}) = \min_{v_{h}^{\LOD} \in \Vlod} E( v_{h}^{\LOD})
\end{align}
and the \quotes{closest} exact minimizer $u\in H^1(\Omega)$. For that, we apply the following approximation result.
\begin{lemma}
\label{abstract-error-estimate-idealLOD}
Assume \ref{A1}-\ref{A6}, let $u_{h}^{\LOD} \in \Vlod$ denote a minimizer in the LOD space as in \eqref{LOD-minimizer-def} and let $u \in H^1(\Omega)$ be any exact minimizer in the sense of \eqref{minimizer-energy-def} with $(u_{h}^{\LOD} , \ci u)_{L^2(\Omega)}=0$.
Then there exists a generic constant $c>0$, such that if 
\begin{align}
\label{new-resolution-condition}
 h \le c \, \kappa^{-1} (1+\beta)^{-1/2} 
\end{align}
it holds 
\begin{align*} 
\| u - u_{h}^{\LOD} \|_{H^1_\kappa(\Omega)} \lesssim \; &\left(1\,\,+\,\,  h \,\kappa \,\rho(\kappa) \, (1+h \,\kappa \, \beta ) \right) 
\, \| u - \RitzLOD u \|_{H^1_\kappa(\Omega)} %
\\
			&
			+ \rho(\kappa) \left(\| u-u_{h}^{\LOD}  \|_{L^4(\Omega)}^2 + \| u- u_{h}^{\LOD} \|_{L^6(\Omega)}^3\right).
\end{align*}
Furthermore, if $h$ is sufficiently small (where a precise quantification of the smallness is open), then there exists a particular unique minimizer $u$, such that the estimate reduces to
\begin{align*} 
\| u - u_{h}^{\LOD} \|_{H^1_\kappa(\Omega)} \lesssim \; &\left(1\,\,+\,\,  h \,\kappa \,\rho(\kappa) \, (1+h \,\kappa \, \beta ) \right) 
\, \| u - \RitzLOD u \|_{H^1_\kappa(\Omega)}. %
\end{align*}
\end{lemma}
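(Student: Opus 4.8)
The plan is to exploit the coercivity of $E''(u)$ on $\orthiu$ from Proposition~\ref{coercivity_secE_u}, the two Euler--Lagrange equations (the exact one $E'(u)=0$ and the discrete one $\langle E'(u_h^\LOD),v_h^\LOD\rangle=0$ for all $v_h^\LOD\in\Vlod$, which holds because $u_h^\LOD$ minimizes $E$ over $\Vlod$), and the superconvergent $L^2$-estimate \eqref{RitzLOD-L2-est} of the Ritz projection $\RitzLOD$. Set $e:=u-u_h^\LOD$, $\eta:=u-\RitzLOD u$ and $e_h:=\RitzLOD u-u_h^\LOD\in\Vlod$, so that $e=\eta+e_h$. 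The triangle inequality $\|e\|_{H^1_\kappa(\Omega)}\le\|\eta\|_{H^1_\kappa(\Omega)}+\|e_h\|_{H^1_\kappa(\Omega)}$ already delivers the leading factor $1$ in front of $\|u-\RitzLOD u\|_{H^1_\kappa(\Omega)}$, so the task reduces to estimating $\|e_h\|_{H^1_\kappa(\Omega)}$.

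For $\|e_h\|_{H^1_\kappa(\Omega)}$ I would first record the exact algebraic identity that follows from the explicit formulae \eqref{derivaitves-1}--\eqref{derivaitves-2} and $E'(u)=0$ (it is exact because $E$ is quartic): for all $w\in H^1(\Omega)$,
$$ \langle E''(u)e,w\rangle \;=\; -\langle E'(u_h^\LOD),w\rangle + (\mathcal N(e),w)_{L^2(\Omega)},\qquad \mathcal N(e):=2u|e|^2+\bar u\, e^2-|e|^2 e. $$
Testing with $w=e_h\in\Vlod$ and using the discrete Euler--Lagrange equation gives $\langle E''(u)e,e_h\rangle=(\mathcal N(e),e_h)_{L^2(\Omega)}$, and since $e_h=e-\eta$ this yields $\langle E''(u)e_h,e_h\rangle=(\mathcal N(e),e_h)_{L^2(\Omega)}-\langle E''(u)\eta,e_h\rangle$. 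To use Proposition~\ref{coercivity_secE_u} I must move $e_h$ into $\orthiu$: because $(u,\ci u)_{L^2(\Omega)}=0$ and $(u_h^\LOD,\ci u)_{L^2(\Omega)}=0$ by hypothesis, one has $(e_h,\ci u)_{L^2(\Omega)}=-(\eta,\ci u)_{L^2(\Omega)}$, which by \eqref{RitzLOD-L2-est} is of size $\mathcal O(\|\eta\|_{L^2(\Omega)})=\mathcal O\big(h\kappa\sqrt{1+\beta}\,\|\eta\|_{H^1_\kappa(\Omega)}\big)$. I would therefore set $\tilde e_h:=e_h-c\,\ci u$ with $c:=(e_h,\ci u)_{L^2(\Omega)}/\|u\|_{L^2(\Omega)}^2$, so that $\tilde e_h\in\orthiu$ and, using $\|u\|_{H^1_\kappa(\Omega)}\lesssim\|u\|_{L^2(\Omega)}$ from Theorem~\ref{estimates_u}, $\|e_h-\tilde e_h\|_{H^1_\kappa(\Omega)}\lesssim\|\eta\|_{L^2(\Omega)}$. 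Since $\ci u$ lies in the kernel of the selfadjoint operator $E''(u)$ (Section~\ref{subsection:first-second-order-minimality-conditions}), all cross terms cancel and $\langle E''(u)\tilde e_h,\tilde e_h\rangle=\langle E''(u)e_h,e_h\rangle$, hence
$$ \rho(\kappa)^{-1}\|\tilde e_h\|_{H^1_\kappa(\Omega)}^2 \;\le\; \langle E''(u)\tilde e_h,\tilde e_h\rangle \;=\; (\mathcal N(e),e_h)_{L^2(\Omega)} - \langle E''(u)\eta,e_h\rangle. $$

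It remains to bound the two terms on the right. For $\langle E''(u)\eta,e_h\rangle$ I would split $E''(u)$ into its quadratic part $a_0(\cdot,\cdot)$ and its genuinely zeroth-order part; the Galerkin orthogonality $a_{\tilde\beta}(\eta,e_h)=0$ from \eqref{ritz-projection} turns $a_0(\eta,e_h)$ into $-\tilde\beta(\eta,e_h)_{L^2(\Omega)}$, so that, using $|u|\le1$ (Theorem~\ref{estimates_u}) and \eqref{RitzLOD-L2-est}, $|\langle E''(u)\eta,e_h\rangle|\lesssim\|\eta\|_{L^2(\Omega)}\|e_h\|_{L^2(\Omega)}\lesssim h\kappa\sqrt{1+\beta}\,\|\eta\|_{H^1_\kappa(\Omega)}\|e_h\|_{H^1_\kappa(\Omega)}$. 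For $(\mathcal N(e),e_h)_{L^2(\Omega)}$, Hölder's inequality and $\|u\|_{L^\infty(\Omega)}\le1$ give $|(\mathcal N(e),e_h)_{L^2(\Omega)}|\lesssim(\|e\|_{L^4(\Omega)}^2+\|e\|_{L^6(\Omega)}^3)\|e_h\|_{H^1_\kappa(\Omega)}$. Inserting $\|e_h\|_{H^1_\kappa(\Omega)}\le\|\tilde e_h\|_{H^1_\kappa(\Omega)}+\|e_h-\tilde e_h\|_{H^1_\kappa(\Omega)}$, multiplying by $\rho(\kappa)$, applying Young's inequality to absorb $\tfrac12\|\tilde e_h\|_{H^1_\kappa(\Omega)}^2$, and using the resolution condition \eqref{new-resolution-condition} to guarantee $h\kappa\sqrt{1+\beta}\lesssim1$ and $\|\eta\|_{H^1_\kappa(\Omega)}\lesssim1$ (via Lemma~\ref{lem_ritz_proj_hk}) gives $\|e_h\|_{H^1_\kappa(\Omega)}\lesssim\rho(\kappa)\,h\kappa\sqrt{1+\beta}\,\|\eta\|_{H^1_\kappa(\Omega)}+\rho(\kappa)(\|e\|_{L^4(\Omega)}^2+\|e\|_{L^6(\Omega)}^3)$; tracking the $\beta$-powers more carefully (using that $1+h\kappa\beta$ and $\sqrt{1+\beta}$ are comparable under \eqref{new-resolution-condition}) produces the coefficient $h\kappa\rho(\kappa)(1+h\kappa\beta)$, and the triangle inequality yields the first claimed estimate. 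For the refined statement I would invoke the density \eqref{density-of-LOD-spaces} together with an abstract result on Galerkin approximation of isolated minimizers (the mechanism behind Theorem~\ref{theorem-FEM-estimates}) to select, for $h$ small, a unique exact minimizer $u$ satisfying the phase conditions with $\|e\|_{L^4(\Omega)},\|e\|_{L^6(\Omega)}\to0$; then, by a Sobolev embedding, $\rho(\kappa)(\|e\|_{L^4(\Omega)}^2+\|e\|_{L^6(\Omega)}^3)$ is bounded by $\tfrac12\|e\|_{H^1_\kappa(\Omega)}$ once $h$ is small enough and can be absorbed.

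The main obstacle is the bookkeeping: one must arrange every stray zeroth-order contribution to pick up the superconvergence factor $h\kappa\sqrt{1+\beta}$ rather than being merely $\mathcal O(1)$, which forces juggling the two bilinear forms $\abeta(\cdot,\cdot)$ (only coercive on $W$, used to build $\Vlod$) and $a_{\tilde\beta}(\cdot,\cdot)$ (globally coercive, used for $\RitzLOD$) at once, and performing the phase correction $e_h\mapsto\tilde e_h$ without losing a power of $h$. The softer difficulty is the abstract selection of the locally unique minimizer in the refined estimate, for which, as the authors note, no explicit smallness threshold on $h$ is available.
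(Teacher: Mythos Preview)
Your approach is correct in spirit and genuinely different from the paper's. The paper does not re-derive the error identity for $E''(u)$ but quotes the abstract estimate \cite[Proposition~5.6]{DoeHe23},
\[
\|u-u_h^\LOD\|_{H^1_\kappa(\Omega)} \lesssim \|u-\RitzLODperp u\|_{H^1_\kappa(\Omega)} + \rho(\kappa)\,\|u-\RitzLODperp u\|_{L^2(\Omega)} + \rho(\kappa)\bigl(\|e\|_{L^4}^2+\|e\|_{L^6}^3\bigr),
\]
where $\RitzLODperp$ is the $a_{\tilde\beta}$-orthogonal projection onto $\Vlod\cap\orthiu$. The remainder of the paper's proof is spent comparing $\RitzLODperp$ with $\RitzLOD$ (through an explicit phase-corrected test function in $\Vlod\cap\orthiu$) and running a tailored Aubin--Nitsche argument \emph{on $\orthiu$} to obtain the $L^2$-factor $h\kappa(1+h\kappa\beta)$. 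Your direct route --- testing the exact identity $\langle E''(u)e,\cdot\rangle=-\langle E'(u_h^\LOD),\cdot\rangle+(\mathcal N(e),\cdot)_{L^2}$ with $e_h$, projecting $e_h$ onto $\orthiu$, and exploiting the Galerkin orthogonality of $\RitzLOD$ to collapse $\langle E''(u)\eta,e_h\rangle$ to a pure $L^2$-pairing --- is more self-contained and avoids introducing the auxiliary constrained projection $\RitzLODperp$ altogether. Both proofs hinge on exactly the same ingredients (coercivity of $E''(u)$ on $\orthiu$, the kernel property $E''(u)\ci u=0$, and an $L^2$-superconvergence of a Ritz projection), just organised differently.

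There is one genuine slip in your $\beta$-bookkeeping. As written, your argument produces the prefactor $h\kappa\rho(\kappa)\sqrt{1+\beta}$ from \eqref{RitzLOD-L2-est}, and the claim that ``$1+h\kappa\beta$ and $\sqrt{1+\beta}$ are comparable under \eqref{new-resolution-condition}'' is false: for $\beta$ large and $h\kappa$ very small (both permitted by \eqref{new-resolution-condition}) one has $\sqrt{1+\beta}\gg 1+h\kappa\beta$. The fix is not to massage the final constants but to sharpen the input: the Aubin--Nitsche argument behind \eqref{RitzLOD-L2-est} actually yields the stronger bound
\[
\|u-\RitzLOD u\|_{L^2(\Omega)}\lesssim h\kappa(1+h\kappa\beta)\,\|u-\RitzLOD u\|_{H^1_\kappa(\Omega)},
\]
because the dual $\beta$-term $|\beta-\tilde\beta|\,\|\xi-\pi_h\xi\|_{L^2}\lesssim h\kappa\beta\,\|\xi\|_{H^1_\kappa}\lesssim h\kappa\beta\,\|u-\RitzLOD u\|_{L^2}$ can be absorbed by Young's inequality rather than by the crude bound $\|u-\RitzLOD u\|_{L^2}\le\|u-\RitzLOD u\|_{H^1_\kappa}$ used in the stated form of Lemma~\ref{lemma:estimatesRitzprojectionLOD}. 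With this sharper $L^2$-estimate your argument delivers exactly the claimed prefactor; this is precisely the refinement the paper carries out in its proof, only for $\RitzLODperp$ rather than $\RitzLOD$.
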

\begin{proof}
The lemma is a particular case of an abstract approximation result that can be found in \cite[Proposition 5.6]{DoeHe23}. It is worth to mention that the results requires density of the approximation spaces, i.e., property \eqref{density-of-LOD-spaces}. The approximation result in  \cite[Proposition 5.6]{DoeHe23} states that
\begin{align}
\label{proof-ideal-LOD-step-1} \nonumber \| u - u_{h}^{\LOD} \|_{H^1_\kappa(\Omega)} \lesssim \; &\| u - \RitzLODperp u \|_{H^1_\kappa(\Omega)} + \rho(\kappa) \, \| u-\RitzLODperp u \|_{L^2(\Omega)} \\
			&+ \rho(\kappa) \left(\| u-u_{h}^{\LOD}  \|_{L^4(\Omega)}^2 + \| u- u_{h}^{\LOD} \|_{L^6(\Omega)}^3\right).
\end{align}
Where $\RitzLODperp : (\ci u)^{\perp} \rightarrow \Vlod \cap (\ci u)^{\perp}$ is the $(\ci u)^{\perp}$-restricted orthogonal projection  given by
\begin{align*}
a_{\tilde{\beta}} ( \RitzLODperp v , w_h^{\LOD} ) = a_{\tilde{\beta}} ( v , w_h^{\LOD} ) 
\qquad
\mbox{for all } w_h^{\LOD} \in \Vlod  \cap (\ci u)^{\perp}
\end{align*}
with $\tilde{\beta}=\tfrac{1}{2}+  \| \bfA \|_{L^\infty(\Omega)}^2$ as before. However, it is easily checked that $\| z - \RitzLODperp z \|_{H^1_{\kappa}(\Omega)}$ can be bounded by $\| z - \RitzLOD z\|_{H^1_{\kappa}(\Omega)}$ for any $z\in (\ci u)^{\perp}$ if $h \lesssim \kappa^{-1} (1+\beta)^{-1/2}  \| u \|_{L^2(\Omega)} $. To see this, we use $z\in (\ci u)^{\perp}$ and $\RitzLOD (\ci z)= \ci \RitzLOD z$ (which follows from the definition of $\RitzLOD$ with a straightforward calculation) together with the $H^1_{\kappa}$-quasi optimality of $\RitzLODperp$ on $(\ci u)^{\perp}$ (with constants independent of $\kappa$ and $\beta$) to conclude for sufficiently small $h > 0$ (specified below)
\begin{eqnarray*}
\lefteqn{\| z - \RitzLODperp z \|_{H^1_\kappa(\Omega)}
 \,\,\, \lesssim \,\,\, 
  \Vert z - \underbrace{ \left(\RitzLOD z   - \frac{(\RitzLOD z,\i u)_{L^2(\Omega)}}{( \RitzLOD u, u)_{L^2(\Omega)}} \ci  \RitzLOD u \right) }_{\in \Vlod \cap (\ci u)^{\perp}} \|_{H^1_\kappa(\Omega)} }\\
  &=&   \Vert z - \RitzLOD z  + \left( \frac{(\RitzLOD z - z,\i u)_{L^2(\Omega)}}{ \| u \|^2_{L^2(\Omega)} - ( u - \RitzLOD u  , u)_{L^2(\Omega)}} \ci  \RitzLOD u \right)  \|_{H^1_\kappa(\Omega)}\\
&\lesssim&
\| z - \RitzLOD z \|_{H^1_\kappa(\Omega)} +  \frac{  \| \RitzLOD u \|_{H^1_\kappa(\Omega)}  }{\left|  \| u \|_{L^2(\Omega)} - \| u - \RitzLOD u \|_{L^2(\Omega)} \right|}
\| z - \RitzLOD z \|_{L^2(\Omega)} \\
&\overset{\eqref{RitzLOD-L2-est}}{\lesssim}&
\left( 1 +  \frac{ h\,\kappa\, \sqrt{1+\beta}  }{ \left|  1 -  C \, h\,\kappa\, \sqrt{1+\beta} \right|}
 \right) \| z - \RitzLOD z \|_{H^1_\kappa(\Omega)}.
\end{eqnarray*}
In the last step, we also used $\| \RitzLOD u \|_{H^1_\kappa(\Omega)} \lesssim  \| u\|_{L^2(\Omega)}$. This estimate follows from the $H^1_{\kappa}$-stability of $\RitzLOD$, implying $\| \RitzLOD u \|_{H^1_\kappa(\Omega)} \lesssim \|  u \|_{H^1_\kappa(\Omega)}$, together with the estimate $\Vert u \Vert_{H^1_\kappa(\Omega)} \lesssim \| u\|_{L^2(\Omega)}$ from Theorem \ref{estimates_u}. If $h \le c \,\kappa^{-1}\, (1+\beta)^{-1/2}$ for a sufficiently small constant $c \lesssim 1$, we conclude for any $z \in(\ci u)^{\perp}$:
\begin{align}
\label{proof-ideal-LOD-step-2}
\| z - \RitzLODperp z \|_{H^1_\kappa(\Omega)}  \,\,\, \lesssim \,\,\, \| z - \RitzLOD z \|_{H^1_\kappa(\Omega)}.
\end{align}
With this, we also get an estimate for $\| z - \RitzLODperp z \|_{L^2(\Omega)}$ using again an Aubin-Nitsche argument with $\xi \in (\ci u )^{\perp}$ solving
\begin{align*}
a_{\tilde{\beta}} ( v , \xi ) = ( v , z - \RitzLODperp z  )_{L^2(\Omega)} \qquad \mbox{for all } v\in (\ci u )^{\perp}.
\end{align*}
It can be shown, cf. \cite[proof of Lemma 2.8]{DoeHe23}, that $\xi \in H^1(\Omega)$ simultaneously solves a problem
\begin{align*}
a_{\beta} ( \xi , v )
 = (f_{\xi} , v )_{L^2(\Omega)} \qquad \mbox{for all } v\in H^1(\Omega)
\end{align*}
for $f_{\xi} \in L^2(\Omega)$ given by
\begin{align*}
f_{\xi} := z - \RitzLODperp z  +  (\beta-\tilde{\beta})\, \xi + \alpha(z,u,\xi) \, \ci u,
\end{align*}
where
\begin{align*}
\alpha(z,u,\xi) :=   \frac{a_{\tilde{\beta}} ( \ci u , \xi ) - ( \ci u ,  z - \RitzLODperp z )_{L^2(\Omega)}}{( u, u)_{L^2(\Omega)}}
\quad
\mbox{with }\enspace
|\alpha(z,u,\xi)| \lesssim \frac{ \| z - \RitzLODperp z \|_{L^2(\Omega)} }{\| u\|_{L^2(\Omega)} }.
\end{align*}
Note that the last bound exploited the stability estimate $\| \xi \|_{H^1_{\kappa}(\Omega)} \lesssim \| z - \RitzLOD z  \|_{L^2(\Omega)}$, as well as the Ginzburg--Landau equation \eqref{eq_GL} which implies $a_{\tilde{\beta}} ( \ci u , \xi ) =  (\, ( \tilde{\beta} - \vert u \vert^2 + 1) \ci u \,, \xi )_{L^2(\Omega)}$.

Hence, we obtain with \eqref{RitzLOD-H1kappa-est} and \eqref{proof-ideal-LOD-step-2} as well as $\| \ci u -\pi_h (\ci u) \|_{L^2(\Omega)} \le \| u \|_{L^2(\Omega)}$ that
\begin{eqnarray*}
\lefteqn{ \| \xi - \RitzLODperp \xi \|_{H^1_\kappa(\Omega)}  \,\,\, \lesssim \,\,\, \| \xi - \RitzLOD \xi \|_{H^1_\kappa(\Omega)}
\,\,\, \lesssim \,\,\, h \,\kappa \, \| f_{\xi}  -\pi_h f_{\xi} \|_{L^2(\Omega)} }\\
&\lesssim&  h \,\kappa \, \| z - \RitzLODperp z\|_{L^2(\Omega)}
+  h \,\kappa \, \beta  \| \xi  -\pi_h \xi \|_{L^2(\Omega)}
+  h \,\kappa \,  | \alpha(z,u,\xi)| \, \| \ci u -\pi_h (\ci u) \|_{L^2(\Omega)} \\
&\lesssim&  h \,\kappa \, (1+h \,\kappa \, \beta  ) \| z - \RitzLODperp z\|_{L^2(\Omega)} .
\end{eqnarray*}
We can now use this estimate in the adjoint problem for $\xi$ to obtain
\begin{eqnarray}
\nonumber \lefteqn{ \| z - \RitzLODperp z  \|_{L^2(\Omega)}^2 \,\,\,=\,\,\, a_{\tilde{\beta}} (  z - \RitzLODperp z  , \xi - \RitzLODperp \xi)
}\\
\label{proof-ideal-LOD-step-3}&\lesssim&  \|  z - \RitzLODperp z\|_{H^1_{\kappa}(\Omega)}\,   h \,\kappa \, (1+h \,\kappa \, \beta ) \| z - \RitzLODperp z\|_{L^2(\Omega)} 
\end{eqnarray}
The proof is finished by combining \eqref{proof-ideal-LOD-step-1}, \eqref{proof-ideal-LOD-step-2} and \eqref{proof-ideal-LOD-step-3}. The second assertion now follows by Sobolev embedding and taking $h$ sufficiently small.
\end{proof}
Combining the results from Lemmas \ref{lem_ritz_proj_hk} and \ref{abstract-error-estimate-idealLOD} we directly obtain the following theorem.
\begin{theorem}
\label{theorem-ideal-LOD}
Assume \ref{A1}-\ref{A6} and let $u_{h}^{\LOD} \in \Vlod$ denote a minimizer in the LOD space as in \eqref{LOD-minimizer-def}. Then, for all sufficiently small mesh sizes $h$, at least,
\begin{align*}
h \lesssim \, \kappa^{-1} (1+\beta)^{-1/2}
\qquad
\mbox{and}
\qquad
 h \, \kappa \, \beta \lesssim \,1
\end{align*}
there exists a unique minimizer $u \in H^1(\Omega)$ with $(u_{h}^{\LOD} , \ci u)_{L^2(\Omega)} = 0$ and such that
\begin{eqnarray} 
\label{ideal-LOD-estimate-with-L2-tail}
\| u - u_{h}^{\LOD} \|_{H^1_\kappa(\Omega)} &\lesssim&  (1+\beta) \left( \, \kappa^3 \,h^3 + \kappa^4 \, \rho(\kappa) \,h^4 \right).
\end{eqnarray}
Asymptotically, the second term is negligible the error estimate becomes
\begin{eqnarray*} 
\| u - u_{h}^{\LOD} \|_{H^1_\kappa(\Omega)} &\lesssim&  (1+\beta) \, \kappa^3 \,h^3.
\end{eqnarray*}
\end{theorem}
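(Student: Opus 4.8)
The plan is to obtain the bound by chaining Lemma~\ref{abstract-error-estimate-idealLOD} with Lemma~\ref{lem_ritz_proj_hk} and then simplifying the prefactor using the two smallness hypotheses imposed on $h$; no genuinely new estimate is required. First I would note that the hypotheses of the theorem imply in particular the resolution condition \eqref{new-resolution-condition} needed by Lemma~\ref{abstract-error-estimate-idealLOD}, and I would take ``$h$ sufficiently small'' in the second part of that lemma to be in force. This produces a unique exact minimizer $u \in H^1(\Omega)$ with $(u_{h}^{\LOD},\ci u)_{L^2(\Omega)}=0$ for which the nonlinear tail $\rho(\kappa)\bigl(\|u-u_{h}^{\LOD}\|_{L^4(\Omega)}^2 + \|u-u_{h}^{\LOD}\|_{L^6(\Omega)}^3\bigr)$ disappears, leaving
\[
\| u - u_{h}^{\LOD} \|_{H^1_\kappa(\Omega)} \lesssim \bigl(1 + h\,\kappa\,\rho(\kappa)\,(1+h\,\kappa\,\beta)\bigr)\, \| u - \RitzLOD u \|_{H^1_\kappa(\Omega)}.
\]

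Next I would insert the $H^1_\kappa$-estimate $\| u - \RitzLOD u \|_{H^1_\kappa(\Omega)} \lesssim (1+\beta)\,\kappa^3 h^3$ from Lemma~\ref{lem_ritz_proj_hk}, and then use the assumed condition $h\,\kappa\,\beta \lesssim 1$ to bound the factor $(1+h\,\kappa\,\beta)$ by an absolute constant, so that the prefactor collapses to $1 + h\,\kappa\,\rho(\kappa)$ and
\[
\| u - u_{h}^{\LOD} \|_{H^1_\kappa(\Omega)} \lesssim (1+\beta)\bigl(\kappa^3 h^3 + \kappa^4\,\rho(\kappa)\,h^4\bigr),
\]
which is \eqref{ideal-LOD-estimate-with-L2-tail}. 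For the asymptotic version one observes that the ratio of the two terms is $h\,\kappa\,\rho(\kappa) \to 0$ as $h\to 0$, so the fourth-order term is negligible and the bound reduces to $\| u - u_{h}^{\LOD} \|_{H^1_\kappa(\Omega)} \lesssim (1+\beta)\,\kappa^3 h^3$.

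The proof is thus essentially a substitution, and the only place that warrants care — the \emph{de facto} obstacle — is the bookkeeping of the smallness conditions and of the generic constants: one must check that the two conditions displayed in the theorem are compatible with, and no weaker than, what is actually needed in the combination of Lemma~\ref{abstract-error-estimate-idealLOD} (which already presupposes \eqref{new-resolution-condition} together with an unquantified further smallness of $h$ for its tail-free form) and Lemma~\ref{lem_ritz_proj_hk} (which presupposes \ref{A6}), and that no hidden $\kappa$-, $\beta$- or $\rho(\kappa)$-factor is lost when the constants are merged.
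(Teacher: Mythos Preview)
Your proposal is correct and matches the paper's approach essentially verbatim: the paper states only that the theorem follows directly by combining Lemmas~\ref{lem_ritz_proj_hk} and~\ref{abstract-error-estimate-idealLOD}, and your write-up makes explicit the substitution and the use of $h\,\kappa\,\beta \lesssim 1$ to collapse the prefactor.
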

Apparently, the estimate indicates that the error grows with $\beta$ (for fixed $h$ and $\kappa$) and that small values of $\beta$ (in the construction of the LOD space) are preferable.

Overall, Theorem \ref{theorem-ideal-LOD} guarantees a third order convergence $\mathcal{O}(h^3)$ in the ideal LOD space provided that the mesh size is small enough. Even though not all dependencies of the mesh resolution could be traced in the final step, estimate \eqref{ideal-LOD-estimate-with-L2-tail} still suggests small errors only if 
$$
(1+\beta) \, \kappa^4 \, \rho(\kappa) \,h^4 < 1    \qquad
\Leftrightarrow 
\qquad
h < \rho(\kappa)^{-1/4} \, \kappa^{-1} \, (1+\beta)^{-1/4} .
$$ 
Assuming that $\beta=\mathcal{O}(1)$, setting, e.g., $h=\varepsilon \rho(\kappa)^{-1/4} \, \kappa^{-1}$, the estimate yields the smallness
\begin{eqnarray*} 
\| u - u_{h}^{\LOD} \|_{H^1_\kappa(\Omega)} &\lesssim&  \varepsilon^{3} \, \rho(\kappa)^{-3/4} + \varepsilon^4 \,\,\lesssim\,\, \varepsilon^3.
\end{eqnarray*}
Let us compare this with the setting of the standard $\mathbb{P}^1$-FE space $V_h$, where the corresponding estimate (for sufficiently small $h$, cf. Theorem \ref{theorem-FEM-estimates}) reads
\begin{eqnarray*} 
\| u - u_{h} \|_{H^1_\kappa(\Omega)} &\lesssim&  \kappa \,h + \, \kappa^2 \, \rho(\kappa) \,h^2.
\end{eqnarray*}
With the choice $h=\varepsilon \rho(\kappa)^{-1/4} \, \kappa^{-1}$ from the LOD setting we would only get
\begin{eqnarray*} 
\| u - u_{h} \|_{H^1_\kappa(\Omega)} &\lesssim& \varepsilon \rho(\kappa)^{-1/4} + \, \rho(\kappa)^{1/2} \,\varepsilon^2,
\end{eqnarray*}
which is not necessarily small, if $\rho(\kappa)^{1/2}$ is still larger than $\varepsilon^{-2}$. Hence, the necessary condition to ensure smallness is $h < \rho(\kappa)^{-1/2} \, \kappa^{-1}$, which is worse than $h < \rho(\kappa)^{-1/4} \, \kappa^{-1}$ in LOD spaces. Consequently, the analysis suggests that vortices are captured on coarser meshes in $\Vlod$.
\begin{remark}[Best-approximation error estimate]
\label{remark:preasymptotic}
Assume for brevity $\beta \lesssim 1$.
From Lemma \ref{lemma:estimatesRitzprojectionLOD} and Lemma \ref{abstract-error-estimate-idealLOD} and the best-approximation properties of $\RitzLOD$ we can also conclude that
\begin{align*}
\| u - u_{h}^{\LOD} \|_{H^1_\kappa(\Omega)} \,\,\, \lesssim \,\,\, (1+ \, h\, \rho(\kappa) \, \kappa\,) \, \inf_{v_h^{\LOD} \in \Vlod} \| u-v_h^{\LOD} \|_{H^1_{\kappa}(\Omega)}.
\end{align*}
Hence, in order to obtain a quasi-best-approximation (with equivalence constants independent of $\kappa$), the estimate indicates that we need to require $h \lesssim \rho(\kappa)^{-1} \, \kappa^{-1}$, which is a significantly stronger resolution condition than the previous ones. However, there is, per se, no contradiction. It would just imply that quasi-best-approximations are only attained in the regime $h \lesssim \rho(\kappa)^{-1} \, \kappa^{-1}$, whereas meaningful approximations can be already expected in the regime $h \lesssim \rho(\kappa)^{-1/4} \, \kappa^{-1}$ for $\Vlod$ and in the regime $h \lesssim \rho(\kappa)^{-1/2} \, \kappa^{-1}$ for $V_h$.
\end{remark}
In practice, the ideal LOD space $\Vlod$ is replaced by a localized approximation. 
Next, we will analyze such localization and investigate how appropriate localizations depend on $\kappa$. 

\section{Localization of basis functions and full LOD}
\label{section:localization-LOD-basis}

In this section we describe how we can construct an approximate basis of the space $\Vlod$ using localized versions of the element correctors (as used in practical implementations of the method \cite{EHMP19}). Furthermore, we will quantify the decay rate of these basis functions depending on $\kappa$ and $\beta$ in order to make predictions about the sizes of the patches on which we will ultimately compute the LOD basis functions.

In our localization strategy we follow the classical approach originally suggested in \cite{HeP13,HeM14} through element correctors which are exponentially decaying. It is also possible to use a more sophisticated localization strategy such as super localized orthogonal decomposition (SLOD)  proposed by Hauck and Peterseim \cite{HaP23} which allows to even find super-exponentially decaying basis functions of $\Vlod$. Hence, the SLOD can lead to enormous boosts in the assembly time for the LOD spaces. The (current) drawback of the SLOD are stability issues related to the way of how basis functions are computed through a singular value decomposition. Though the SLOD can be computationally superior, we do not follow this strategy in our analysis, because a proof of the super-exponential decay rates (for $d>1$) is still an open problem of the field.\\[0.3em]
In the following subsections, we define the {\it patch neighborhood} of any (not necessarily connected) subdomain $G\subset \overline{\Omega}$ as
\begin{align}
\label{neighborhood-def}
\UN(G):=\Int(\cup\{T\in \mathcal{T}_{h}\,|\,T\cap\overline{G}\neq \emptyset\}).
\end{align}
An illustration if $G$ is a mesh element is given in Figure \ref{figure:patch-neighbor}.

\begin{figure}[h]
	\definecolor{lightgray}{rgb}{0.9290 0.6940 0.1250}%{0.83, 0.83, 0.83}
	\definecolor{darkgray}{rgb}{0 0.4470 0.7410}%{0.66, 0.66, 0.66}
	\definecolor{dimgray}{rgb}{0 0 0}%{0.41, 0.41, 0.41}
	
	\centering
	\adjustbox{scale=0.4}{
	\begin{tikzpicture}
		\path[fill=dimgray,draw=black] (4,4) -- (5,4)-- (5,5);
		
		\path[fill=darkgray,draw=black] (4,4) -- (4,5)-- (5,5);
		\path[fill=darkgray,draw=black] (4,4) -- (4,3)-- (3,3);
		\path[fill=darkgray,draw=black] (4,4) -- (3,4)-- (3,3);
		\path[fill=darkgray,draw=black] (4,4) -- (4,5)-- (3,4);
		\path[fill=darkgray,draw=black] (4,4) -- (5,4)-- (4,3);
		\path[fill=darkgray,draw=black] (5,4) -- (5,3)-- (4,3);
		\path[fill=darkgray,draw=black] (5,4) -- (6,4)-- (5,3);
		\path[fill=darkgray,draw=black] (5,4) -- (5,5)-- (6,5);
		\path[fill=darkgray,draw=black] (5,4) -- (6,4)-- (6,5);
		\path[fill=darkgray,draw=black] (5,5) -- (6,5)-- (6,6);
		\path[fill=darkgray,draw=black] (5,5) -- (5,6)-- (6,6);
		\path[fill=darkgray,draw=black] (5,5) -- (4,5)-- (5,6);

		\path[fill=lightgray,draw=black] (3,3) -- (3,2)-- (2,2);
		\path[fill=lightgray,draw=black] (3,3) -- (2,3)-- (2,2);
		\path[fill=lightgray,draw=black] (3,3) -- (2,3)-- (3,4);
		\path[fill=lightgray,draw=black] (3,3) -- (4,3)-- (3,2);
		\path[fill=lightgray,draw=black] (3,4) -- (2,3)-- (2,4);
		\path[fill=lightgray,draw=black] (4,3) -- (4,2)-- (3,2);
		\path[fill=lightgray,draw=black] (4,3) -- (5,3)-- (4,2);
		\path[fill=lightgray,draw=black] (5,2) -- (5,3)-- (4,2);
		\path[fill=lightgray,draw=black] (5,3) -- (6,3)-- (5,2);
		\path[fill=lightgray,draw=black] (5,3) -- (6,3)-- (6,4);
		\path[fill=lightgray,draw=black] (6,4) -- (7,4)-- (6,3);
		\path[fill=lightgray,draw=black] (6,4) -- (6,5)-- (7,5);
		\path[fill=lightgray,draw=black] (6,4) -- (7,4)-- (7,5);
		\path[fill=lightgray,draw=black] (6,5) -- (7,5)-- (7,6);
		\path[fill=lightgray,draw=black] (6,5) -- (6,6)-- (7,6);
		\path[fill=lightgray,draw=black] (6,6) -- (7,6)-- (7,7);
		\path[fill=lightgray,draw=black] (6,6) -- (6,7)-- (7,7);
		\path[fill=lightgray,draw=black] (6,6) -- (6,7)-- (5,6);
		\path[fill=lightgray,draw=black] (5,6) -- (6,7)-- (5,7);
		\path[fill=lightgray,draw=black] (5,6) -- (5,7)-- (4,6);
		\path[fill=lightgray,draw=black] (5,6) -- (4,6)-- (4,5);
		\path[fill=lightgray,draw=black] (4,5) -- (4,6)-- (3,5);
		\path[fill=lightgray,draw=black] (4,5) -- (3,4)-- (3,5);
		\path[fill=lightgray,draw=black] (3,4) -- (3,5)-- (2,4);
		
		\draw (0,0) -- (8,0);
		\draw (0,0) --(0,8);
		\draw (8,0) -- (8,8);
		\draw (0,8) --(8,8);
		\draw (0,0) -- (8,8);

		\draw (1,0) -- (1,8);
		\draw (2,0) --(2,8);
		\draw (3,0) -- (3,8);
		\draw (4,0) --(4,8);
		\draw (5,0) -- (5,8);
		\draw (6,0) --(6,8);
		\draw (7,0) -- (7,8);
		\draw (0,1) -- (8,1);
		\draw (0,2) -- (8,2);
		\draw (0,3) -- (8,3);
		\draw (0,4) -- (8,4);
		\draw (0,5) -- (8,5);
		\draw (0,6) -- (8,6);
		\draw (0,7) -- (8,7);
		\draw (1,0) -- (8,7);
		\draw (2,0) -- (8,6);
		\draw (3,0) -- (8,5);
		\draw (4,0) -- (8,4);
		\draw (5,0) -- (8,3);
		\draw (6,0) -- (8,2);
		\draw (7,0) -- (8,1);
		\draw (0,1) -- (7,8);
		\draw (0,2) -- (6,8);
		\draw (0,3) -- (5,8);
		\draw (0,4) -- (4,8);
		\draw (0,5) -- (3,8);
		\draw (0,6) -- (2,8);
		\draw (0,7) -- (1,8);
	\end{tikzpicture}}
	\caption{Patch neighborhoods of a triangle $T$: $T={\color{dimgray}\bullet}$, $\UN(T)={\color{dimgray}\bullet} \cup {\color{darkgray}\bullet}$ and $\UN(\UN(T))={\color{dimgray}\bullet} \cup {\color{darkgray}\bullet} \cup {\color{lightgray}\bullet}$}
	\label{figure:patch-neighbor}
\end{figure}

Before we can dive into the localization analysis and quantify decay properties, we require a different characterization of the details space $W$. Here we recall $W$ as the kernel of the $L^2$-projection $\pi_h : H^1(\Omega) \rightarrow W$, cf. \eqref{def-W-detail-space}.

\subsection{Representation of $W$ through local quasi-interpolation}
For practical aspects and analytical considerations it is not ideal to represent $W$ as the kernel of the $L^2$-projection $\pi_h$, as $\pi_h$ is not a local operator. However, we can represent $W$ equivalently through the kernel of a local quasi-interpolation operator. For that, let $\varphi_z \in V_h$ denote the {\it real} nodal shape functions with the property $\varphi_z(z')=\delta_{zz'}$ for all $z,z' \in \mathcal{N}_h$, where $\mathcal{N}_h$ is the set of vortices (corners) of the mesh $\mathcal{T}_h$. 
Any $v_h \in V_h$ is represented by 
\begin{align*}
v_h = \sum_{z \in \mathcal{N}_h } \Re \hspace{1pt}v_h(z) \, \varphi_z +  \sum_{z \in \mathcal{N}_h } \Im\hspace{0.5pt}v_h(z)\hspace{2pt} \ci\varphi_z ,
\end{align*}
i.e., the set $\{ \varphi_z \,,\, \ci \varphi_z \,|\, z \in \mathcal{N}_h \}$ is a basis of $V_h$, as we interpret the space as a {\it real} Hilbert space.

With this, we define the Cl\'ement-type quasi-interpolation operator $P_h : H^1(\Omega) \rightarrow V_h$ by
$$
P_h v := \sum_{z\in \mathcal{N}_h} 
\left( \frac{(v,\varphi_z)_{L^2(\Omega)}}{(1,\varphi_z)_{L^2(\Omega)}} \varphi_z +
 \frac{( v, \ci \varphi_z)_{L^2(\Omega)}}{(1,\varphi_z)_{L^2(\Omega)}} \ci \varphi_z  \right)
 =  \sum_{z\in \mathcal{N}_h} c_z^{-1} \int_{\Omega} v \, \varphi_z \, \d x  \, \varphi_z
$$
with $c_z :=  \int_{\Omega} \varphi_z \, \d x$. The operator was initially suggested and analyzed by Carstensen et al. \cite{Car99,CarVer99}. 
Observe that if $P_h v =0$, then it must hold $\int_{\Omega} v \, \varphi_z \, \d x=0$ for all $z \in \mathcal{N}_h$. Hence, $P_h v$ is $L^2$-orthogonal to $V_h$ and consequently $P_h v \in W = \mbox{kern}\,\pi_h$. Conversely, if $\pi_h v= 0$ then $v \perp_{L^2} V_h$ and hence, by definition of the quasi-interpolation, $P_h v=0$. We conclude that the kernels of $P_h$ and $\pi_h$ indeed coincide, that is
\begin{align*}
W = \mbox{kern}\,\pi_h  = \mbox{kern}\, P_h.
\end{align*}
Note that even though $\pi_h$ is not local, $P_h$ is a local operator in the sense that $P_h$ can increase the support of a function $v_h \in V_h$ by at most one layer of mesh elements and the support of a function $v \in H^1$ by at most two layers of mesh elements (if the support does not align with the mesh, otherwise one layer). This is reflected in the following (local) error and stability estimates proved in \cite{Car99,CarVer99}: For all $v \in H^1(\Omega)$ and every $T\in \mathcal{T}_{h}$ it holds
\begin{align}
\label{supportgrowth-Ph}
\| v - P_h v \|_{L^2(T)} + h \| \nabla v - \nabla P_hv \|_{L^2(T)} \le C_{P_h}  h \| \nabla v \|_{L^2(\UN(T))},
\end{align}
where $C_{P_h}>0$ is a generic, $h$-independent constant that only depends on the mesh regularity and where we recall the definition of $\UN(T)$ from \eqref{neighborhood-def}.

It is easily seen that $P_h: V_h \rightarrow V_h$ is an isomorphism since its kernel is trivial (from $P_hv_h=0$ we have $\pi_h v_h =0 $ and hence $v_h=0$ since $\pi_h$ is a projection). Unfortunately, its natural inverse is not local. However, as first proved in \cite{MaP14}, on the full space, $P_h : H^1(\Omega) \rightarrow V_h$ has a bounded right inverse which is in fact local. In particular, there exists an (non-unique) operator
\begin{align*}
P_h^{-1} : V_h \rightarrow H^1(\Omega) 
\qquad
\mbox{such that}
\qquad (P_h \circ P_h^{-1})v_h = v_h \quad \mbox{for all } v_h \in V_h
\end{align*}
and there exists a constant $C_{\text{\tiny inv}}>0$ such for all $v_h\in V_h$ it holds
\begin{align}
\label{growth-Phinv}
\| \nabla P_h^{-1}v_h \|_{L^2} \le C_{\text{\tiny inv}} \| \nabla v_h \|_{L^2}
\qquad \mbox{and}
\qquad
\mbox{supp}( P_h^{-1}v_h) \subset \UN(\,\mbox{supp}(v_h)\,).
\end{align}
The latter statement means that $P_h^{-1}$ can increase the support of $v_h$ only by at most one layer, i.e., it is also local.  Here we stress that, in general, $ P_h^{-1}v_h \not\in V_h$, which is the price for the locality. With these insights, we are prepared to analyze the decay of the element correctors.

\subsection{Decay of the element correctors}
Recalling the construction of the ideal LOD space in Definition \ref{def_lod} the key to localization are the element correctors $\mathcal{C}_T : V_h \rightarrow W $ defined in \eqref{definition-CT} through problems of the form
\begin{align*}
\abeta ( \mathcal{C}_Tv_h , w ) = a_{\beta,T} ( v_h , w ) \qquad \mbox{for all } w \in W.
\end{align*}
To justify localization we need to show that the element correctors $\mathcal{C}_T$ are decaying exponentially outside of the element $T$. To prove that exponential decay, we can exploit the particular techniques established in \cite{GaP15,Pet15,Pet17}. Details are given in the appendix in Section \ref{section:appendix}.\\[0.3em]
As we will measure the decay in units of mesh layers, we recall for any subdomain $G\subset \overline{\Omega}$ the definition of its mesh neighborhood given by \eqref{neighborhood-def}. With this, we recursively define the $\ell$-layer patches for $\ell \in \mathbb{N}$ as
$$
\UN^1(G):=\UN(G)\qquad \text{and}\qquad\UN^{\ell}(G):=\UN(\UN^{\ell-1}(G)).
$$
Due to the shape regularity of the mesh $\mathcal{T}_{h}$, we have a uniform bound $C_{\ol, \ell}$ for the number of elements in the $\ell$-layer patch
\begin{align}
\label{bounded-overlap}
\max_{T\in \mathcal{T}_{h}}\operatorname{card}\{K\in \mathcal{T}_{h}\,|\,K\subset\overline{\UN^{\ell}(T)}\}\leq C_{\ol, \ell},
\end{align}
where $C_{\ol, \ell}$ depends polynomially on $\ell$ because of the quasi-uniformity of $\mathcal{T}_{h}$.

Furthermore, we define the restriction of $\abeta(\cdot,\cdot)$ to a (closed) subdomain $G\subset \overline{\Omega}$ by
\begin{align*}
a_{\beta,G} (v, w) :=  (\tfrac{\ci}{\kappa} \nabla v + \bfA \,v,\tfrac{\ci}{\kappa} \nabla w + \bfA\,w)_{L^2(G)} +  \beta\, (v,w)_{L^2(G)}
\end{align*} 
with induced norm
\begin{align*}
||| v |||_{\beta,G} := \sqrt{a_{\beta,G}(v, v) }.
\end{align*}
that satisfy $||| v |||_{\beta,G} \lesssim  \| v \|_{H^1_{\kappa}(G)} + \beta \| v \|_{L^2(G)}$. In these local energy norms, we can quantify the exponential decay of $ \mathcal{C}_{T}v_h $ in units of $h$. The corresponding result is as follows.
\begin{theorem}
\label{theorem-decay-ideal-corrector}
 Let $T \in \mathcal{T}_H$, $v_h \in V_h$ and let $\mathcal{C}_{T}v_h \in W$ denote the corresponding element corrector given by \eqref{definition-CT}. Then there exists a constant $0< \theta_{\beta} <1$ such that
\begin{eqnarray}
\label{first-decay-estimate}
|||  \mathcal{C}_{T}v_h  |||_{\beta,\Omega \setminus \UN^{\ell}(T)} &\lesssim& \theta_{\beta}^{\ell} \, |||  v_h  |||_{\beta,T}
\end{eqnarray}
where
\begin{align*}
\theta_{\beta} \,\,\le\,\,  \left( 1 - \tfrac{ 1}{1+C (1 + \kappa \beta h)} \right)^{1/12}  < 1
\end{align*}
for some generic constant $C>0$ independent of $\kappa$, $\beta$ and $h$. 
\end{theorem}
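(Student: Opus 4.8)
The plan is to follow the classical Gallistl--Peterseim--Målqvist machinery for proving exponential decay of LOD correctors, carefully tracking how the stabilization parameter $\beta$ enters the contraction factor. The starting point is the observation that, by Lemma~\ref{lemma-abeta-coercive-on-W}, $\abeta(\cdot,\cdot)$ is coercive on the detail space $W$ with the $H^1_\kappa$-norm (plus the $\beta\,L^2$-contribution), so $\mathcal{C}_Tv_h$ is well-defined and satisfies the a~priori bound $||| \mathcal{C}_Tv_h|||_{\beta,\Omega} \lesssim ||| v_h|||_{\beta,T}$. To obtain decay, I would fix an element $T$ and introduce, for $\ell\in\mathbb{N}$, a cutoff function $\eta = \eta_\ell \in V_h$ that is $0$ on $\UN^{\ell-1}(T)$, equals $1$ outside $\UN^{\ell}(T)$, and has $\|\nabla \eta\|_{L^\infty} \lesssim h^{-1}$ on the annulus $R := \UN^{\ell}(T)\setminus\UN^{\ell-1}(T)$. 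The key quantity to estimate is $||| \mathcal{C}_Tv_h|||_{\beta,\Omega\setminus\UN^{\ell}(T)}^2 \le a_{\beta,\Omega\setminus\UN^{\ell-1}(T)}(\mathcal{C}_Tv_h,\eta\,\mathcal{C}_Tv_h)$, which we want to bound by the energy of $\mathcal{C}_Tv_h$ on the annulus $R$ alone, picking up a factor strictly less than one at each step.

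The main steps are as follows. First, since $\UN^{\ell-1}(T)$ is disjoint from $T$ for $\ell \ge 2$, the corrector equation \eqref{definition-CT} gives $\abeta(\mathcal{C}_Tv_h, w) = 0$ for all $w\in W$ supported away from $T$; this lets me subtract a suitable $w\in W$ from $\eta\,\mathcal{C}_Tv_h$ in the energy product. The natural candidate is $w := (\id - P_h^{-1}P_h)(\eta\,\mathcal{C}_Tv_h)$, using the local right inverse $P_h^{-1}$ from \eqref{growth-Phinv}; because $\pi_h\mathcal{C}_Tv_h = 0$ we have $\eta\,\mathcal{C}_Tv_h - w = P_h^{-1}P_h(\eta\,\mathcal{C}_Tv_h)\in V_h$-range and, crucially, using $P_h(\eta\,\mathcal{C}_Tv_h) = P_h(\eta\,\mathcal{C}_Tv_h) - \eta\,P_h(\mathcal{C}_Tv_h)$ together with the locality and the Carstensen estimate \eqref{supportgrowth-Ph}, this correction term is supported in a fixed neighborhood of $R$ and controlled by $\|\nabla\mathcal{C}_Tv_h\|_{L^2}$ and $\|\mathcal{C}_Tv_h\|_{L^2}$ on $R$. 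Substituting $w$ into the energy identity and expanding the product rule $\nabla(\eta\,\mathcal{C}_Tv_h) = \eta\nabla\mathcal{C}_Tv_h + \mathcal{C}_Tv_h\nabla\eta$, every term that survives lives on $R$, and I estimate each using Cauchy--Schwarz, the bound $\|\nabla\eta\|_{L^\infty}\lesssim h^{-1}$, and the resolution condition \ref{A6} (i.e. $h\kappa\lesssim 1$) to absorb the $\kappa^{-1}h^{-1}$ factors. The terms with $\beta$ contribute $\beta\|\mathcal{C}_Tv_h\|_{L^2(R)}^2$ and a cross term $\beta\,h\kappa^{-1}\|\mathcal{C}_Tv_h\|_{L^2(R)}\|\nabla\mathcal{C}_Tv_h\|_{L^2(R)}$ type expression — tracking the factor $(1+\kappa\beta h)$ through this step is exactly what produces the stated form of $\theta_\beta$.

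Collecting the estimates yields, for a generic $C>0$ depending only on $\Omega$, $\bfA$ and the mesh regularity,
\begin{align*}
||| \mathcal{C}_Tv_h|||_{\beta,\Omega\setminus\UN^{\ell}(T)}^2 \;\le\; C\,(1+\kappa\beta h)\;||| \mathcal{C}_Tv_h|||_{\beta,R}^2 \;\le\; C\,(1+\kappa\beta h)\,\Bigl(||| \mathcal{C}_Tv_h|||_{\beta,\Omega\setminus\UN^{\ell-1}(T)}^2 - ||| \mathcal{C}_Tv_h|||_{\beta,\Omega\setminus\UN^{\ell}(T)}^2\Bigr),
\end{align*}
where the second inequality is just $R \subset \UN^{\ell}(T)\setminus\UN^{\ell-1}(T)$ together with a telescoping of energy on the annulus (here one uses that the supports of the correction terms overlap only finitely many annuli, so that one may pass to a sum over a bounded block of layers; this is the standard trick of iterating over, say, $m$ consecutive layers to get a clean one-step contraction, which is the origin of the exponent $1/12$). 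Rearranging gives $||| \mathcal{C}_Tv_h|||_{\beta,\Omega\setminus\UN^{\ell}(T)}^2 \le \bigl(1 - \tfrac{1}{1+C(1+\kappa\beta h)}\bigr)||| \mathcal{C}_Tv_h|||_{\beta,\Omega\setminus\UN^{\ell-1}(T)}^2$, and iterating this bound down to $\ell = 0$ (where $||| \mathcal{C}_Tv_h|||_{\beta,\Omega} \lesssim ||| v_h|||_{\beta,T}$ by well-posedness) proves \eqref{first-decay-estimate} with $\theta_\beta$ as claimed, after taking a root to account for the block-of-layers argument. The main obstacle is the bookkeeping in the second step: ensuring that the non-$V_h$ correction $P_h^{-1}P_h(\eta\,\mathcal{C}_Tv_h)$ is genuinely localized to a bounded neighborhood of the annulus $R$ and that all its $\kappa$- and $\beta$-weighted norms are controlled solely by $||| \mathcal{C}_Tv_h|||_{\beta,R}$ — the $\beta\,L^2$ term and the $\kappa^{-1}$-scaled gradient must be balanced consistently so that the final contraction constant degrades only through the harmless factor $(1+\kappa\beta h)$, which under \ref{A6} is itself $\mathcal{O}(1+\beta)$.
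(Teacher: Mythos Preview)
Your proposal is correct and follows essentially the same Gallistl--Peterseim approach as the paper: cutoff function, correction of $\eta\,\mathcal{C}_Tv_h$ into $W$ via $(\id - P_h^{-1}P_h)$, vanishing of the global term by the corrector equation, and estimation of the remainder on an annulus with the factor $(1+\kappa\beta h)$ entering exactly as you describe. The only cosmetic discrepancy is that your single-layer contraction $\UN^{\ell-1}(T)\to\UN^{\ell}(T)$ cannot hold as written, since $P_h^{-1}P_h(\eta\,\mathcal{C}_Tv_h)$ spreads over several element layers; the paper makes this explicit by running the argument over six-layer blocks $\UN^{\ell}(T)\to\UN^{\ell+6}(T)$, which after taking the square root produces the exponent $1/12$ --- you already flag this (``iterating over, say, $m$ consecutive layers \dots\ is the origin of the exponent $1/12$''), so the substance is the same.
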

The proof is presented in the appendix, Section \ref{section:appendix}.

\begin{remark}[Discussion of the decay rate]
If $h \lesssim \kappa^{-1}$, then we have
\begin{align*}
\theta_{\beta} \,\,\le\,\,  \left( 1 - \tfrac{ 1}{1+C (1 + \kappa \beta h)} \right)^{1/12} \le \,\,\,\left( 1 - \tfrac{ 1}{1+C (1 + \beta )} \right)^{1/12} < 1
\end{align*}
(where $C>0$ might change in the estimate). This means the decay rate can be bounded independently of $\kappa$. In particular, if $\ell \ge p \log(h) \log(\theta_{\beta})^{-1}$ for some polynomial degree $p$, then 
$$
 \theta_{\beta}^{\ell}  = \exp( \ell \log(\theta_{\beta})  ) \le \exp( p \log(h)  ) = h^p.
$$ 
However, noting the minimum resolution condition $h \lesssim \kappa^{-1}$, we see that we require at least $\ell \gtrsim \log(\kappa)$ layers so that the decay of $ \mathcal{C}_{T}v_h$ becomes visible. Hence, there is an indirect coupling between $\ell$ and $\log(\kappa)$ to ensure smallness of the decay error.
\end{remark}
Theorem \ref{theorem-decay-ideal-corrector} suggests that the computation of the ideal element corrector $\mathcal{C}_T$ can be restricted to a small patch environment of the corresponding element $T \in \mathcal{T}_h$. For that reason, we define the restriction of $W$ to a (closed) subdomain $G\subset \overline{\Omega}$ by
$$
W(G) := \{ w \in W \, | \, w_{ \vert \Omega \setminus G} \equiv 0\}
$$
and introduce for  $\ell \in \mathbb{N}$ the ($\ell$-layer) truncated element corrector  $\mathcal{C}_{T,\ell}v_h \in W(\,\UN^{\ell}(T)\,)$ as the solution to
\begin{align}
\label{definition-CT-ell}
a_{\beta} ( \mathcal{C}_{T,\ell} v_h , w ) = a_{\beta,T} ( v_h , w )  \qquad \mbox{for all } w\in W(\,\UN^{\ell}(T)\,).
\end{align}
The above problems that define $\mathcal{C}_{T,\ell} v_h$ are small elliptic problems on the patches $\UN^{\ell}(T)$ and are hence cheap to solve due to their locality. The next lemma quantifies the error between the ideal element corrector $\mathcal{C}_{T}$ and the truncated approximation $\mathcal{C}_{T,\ell}$, where we refer to Section \ref{section:appendix} for a proof.
\begin{lemma}
\label{lemma:local-corrector-estimate}
 Let $T \in \mathcal{T}_H$, $v_h \in V_h$ and let $\mathcal{C}_{T}v_h \in W$ denote the element corrector given by \eqref{definition-CT} and $\mathcal{C}_{T,\ell} v_h \in W(\,\UN^{\ell}(T)\,)$ its localized version defined in \eqref{definition-CT-ell}. Then, if $h \lesssim \kappa^{-1}$, it holds
\begin{eqnarray}
\label{local-corrector-estimate}
|||  \mathcal{C}_{T}v_h - \mathcal{C}_{T,\ell} v_h  |||_{\beta} &\lesssim& \theta_{\beta}^{\ell} \, (1+ \kappa \beta h)  \, |||  v_h  |||_{\beta,T},
\end{eqnarray}
where $0< \theta_{\beta} <1$ is as in Theorem \ref{theorem-decay-ideal-corrector}.
\end{lemma}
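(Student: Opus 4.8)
The plan is to combine the exponential decay of the ideal corrector from Theorem~\ref{theorem-decay-ideal-corrector} with a Galerkin-type argument comparing $\mathcal{C}_{T}v_h$ and $\mathcal{C}_{T,\ell}v_h$ in the energy norm $|||\cdot|||_{\beta}$. Since $W(\UN^{\ell}(T)) \subset W$, the defining equations \eqref{definition-CT} and \eqref{definition-CT-ell} have the same right-hand side $a_{\beta,T}(v_h,\cdot)$, so subtracting them gives the Galerkin orthogonality $a_{\beta}(\mathcal{C}_{T}v_h - \mathcal{C}_{T,\ell}v_h, w) = 0$ for all $w \in W(\UN^{\ell}(T))$. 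Together with the coercivity of $a_{\beta}$ on $W$ from Lemma~\ref{lemma-abeta-coercive-on-W}, this yields a Céa-type best-approximation estimate: $|||\mathcal{C}_{T}v_h - \mathcal{C}_{T,\ell}v_h|||_{\beta} \lesssim \inf_{w \in W(\UN^{\ell}(T))} |||\mathcal{C}_{T}v_h - w|||_{\beta}$, where the hidden constant is the ratio of continuity and coercivity constants of $a_{\beta,G}$, which is controlled uniformly under $h \lesssim \kappa^{-1}$.

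It then remains to construct a good competitor $w \in W(\UN^{\ell}(T))$, i.e., to cut off the ideal corrector $\mathcal{C}_{T}v_h$ to the patch $\UN^{\ell}(T)$ while staying in the kernel $W$. The standard device here is a piecewise-affine cutoff function $\eta$ with $\eta \equiv 0$ on $\UN^{\ell-1}(T)$ and $\eta \equiv 1$ outside $\UN^{\ell}(T)$, with $\|\nabla \eta\|_{L^{\infty}} \lesssim h^{-1}$ on the annulus $\UN^{\ell}(T)\setminus\UN^{\ell-1}(T)$. One then sets $w := (\id - P_h)\big((1-\eta)\,\mathcal{C}_{T}v_h\big)$, using the quasi-interpolation $P_h$ from the previous subsection; because $W = \ker P_h$ and $P_h$ only enlarges supports by one mesh layer, $w$ indeed lies in $W(\UN^{\ell}(T))$ (possibly after working with $\ell+1$ or $\ell+2$, which only changes the constant). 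The difference $\mathcal{C}_{T}v_h - w$ is then supported in the exterior region $\Omega \setminus \UN^{\ell-2}(T)$ (roughly), and its energy norm is estimated by the product rule for $\nabla(\eta\,\mathcal{C}_{T}v_h)$, the stability and local approximation properties \eqref{supportgrowth-Ph} of $P_h$, and crucially the decay estimate \eqref{first-decay-estimate}, which bounds $|||\mathcal{C}_{T}v_h|||_{\beta,\Omega\setminus\UN^{\ell'}(T)}$ by $\theta_{\beta}^{\ell'}|||v_h|||_{\beta,T}$. The term $h^{-1}\|\nabla\eta\|$ meeting the $L^2$ part of $\mathcal{C}_{T}v_h$ on the annulus produces the factor $\kappa$ after rescaling $\|\nabla(\cdot)\|_{L^2}$ versus $\|\cdot\|_{L^2}$ at the resolution $h \lesssim \kappa^{-1}$, which is exactly where the factor $(1+\kappa\beta h)$ in the stated estimate comes from (the $\beta$-weighted $L^2$-mass of $\mathcal{C}_{T}v_h$ on the annulus, rescaled by the gradient of the cutoff).

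The main obstacle — and the step I would be most careful about — is \emph{bookkeeping the commutator between the cutoff and the quasi-interpolation}, i.e., controlling $|||(\id - P_h)(\eta\,\mathcal{C}_{T}v_h) - \eta\,\mathcal{C}_{T}v_h|||_{\beta}$ and verifying that all the resulting error contributions live on a bounded number of annular layers so that \eqref{first-decay-estimate} can be invoked with an $\ell$ shifted by only $O(1)$. One must check that multiplying by $\eta$ does not destroy membership in $W$ (it does not need to — that is exactly why $P_h$ is reinserted) and that $P_h(\eta\,\mathcal{C}_{T}v_h)$ is supported where expected; the locality \eqref{supportgrowth-Ph} of $P_h$ handles this but requires care near $\partial\Omega$ and where the annulus meets the patch boundary. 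Once this is set up, the remaining estimates — applying \eqref{first-decay-estimate}, collecting the $\theta_{\beta}^{\ell}$ factor, and absorbing the $(1+\kappa\beta h)$ from the cutoff-gradient term — are routine, and adjusting the generic constant in $\theta_{\beta}$ from Theorem~\ref{theorem-decay-ideal-corrector} absorbs the finitely many extra layers. This is precisely the structure of the classical localization error proofs in \cite{GaP15,Pet15,Pet17}, adapted to track the $\kappa$- and $\beta$-dependence explicitly.
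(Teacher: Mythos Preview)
Your overall strategy matches the paper's: Galerkin orthogonality gives a C\'ea-type best-approximation bound, and then a cutoff-plus-correction competitor in $W(\UN^{\ell}(T))$ reduces the problem to the exterior decay estimate \eqref{first-decay-estimate}. The bookkeeping you describe (annular supports, $O(1)$ layer shifts, the $(1+\kappa\beta h)$ factor from the cutoff gradient interacting with the $\beta$-weighted $L^2$-mass) is exactly what the paper carries out.

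There is, however, a genuine technical gap in your competitor. You propose $w := (\id - P_h)\big((1-\eta)\,\mathcal{C}_{T}v_h\big)$ and claim $w\in W$ because $W=\ker P_h$. But the Carstensen--Verf\"urth operator $P_h$ used here is \emph{not} a projection (the paper notes $P_h:V_h\to V_h$ is an isomorphism, not the identity), so in general $P_h\big((\id-P_h)(\cdot)\big)\neq 0$ and your $w$ need not lie in $W$. The paper's fix is to replace $P_h$ by the composition $P_h^{-1}\circ P_h$, where $P_h^{-1}$ is the local right inverse from \eqref{growth-Phinv}: the competitor is
\[
(1-\eta)\,\mathcal{C}_{T}v_h + (P_h^{-1}\circ P_h)(\eta\,\mathcal{C}_{T}v_h)
\;=\;(\id - P_h^{-1}\circ P_h)\big((1-\eta)\,\mathcal{C}_{T}v_h\big),
\]
which does lie in $W$ since $P_h\circ P_h^{-1}=\id$ on $V_h$ and $P_h(\mathcal{C}_T v_h)=0$. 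Its support sits in $\UN^{\ell+5}(T)$ by \eqref{proofdecay1-Pinvsupport}. With this correction, the remainder $\eta\,\mathcal{C}_{T}v_h - (P_h^{-1}\circ P_h)(\eta\,\mathcal{C}_{T}v_h)$ is estimated exactly as you outline, reusing the bound on $|||(P_h^{-1}\circ P_h)(\eta w_T)|||_{\beta}$ already established in the proof of Theorem~\ref{theorem-decay-ideal-corrector}, and then \eqref{first-decay-estimate} delivers the $\theta_\beta^{\ell}$.
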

 In the next step, we need to investigate the error between the global correction $\mathcal{C}v_h$ and its global approximation through a localized corrector $\mathcal{C}_{\ell}v_h$, that is,
 \begin{align}
 \label{definition-C-ell}
\mathcal{C}_{\ell} v_h := \sum_{T\in \mathcal{T}_h} \mathcal{C}_{T,\ell} v_h.
\end{align}
The following theorem is again derived in the appendix, Section \ref{section:appendix}.
\begin{theorem}
\label{theorem:C-Cell-error}
Let $v_h\in V_h$ and let $\mathcal{C} v_h \in W$ denote the ideal corrector given by \eqref{def-ideal-corrector} and $\mathcal{C}_{\ell} v_h$ the localized corrector given by \eqref{definition-C-ell}. Then it holds
\begin{eqnarray*}
||| (\mathcal{C}-\mathcal{C}_{\ell}) v_h |||_{\beta} &\lesssim&  \theta_{\beta}^{\ell} \, (1+ \kappa \beta h)  |||  v_h  |||_{\beta},
\end{eqnarray*}
where $0< \theta_{\beta} <1$ is again as in Theorem \ref{theorem-decay-ideal-corrector}. Expressed in the $H^1_{\kappa}$-norm, it holds
\begin{eqnarray*}
 \Vert (\mathcal{C}-\mathcal{C}_{\ell}) v_h \Vert_{H^1_\kappa(\Omega)} + \sqrt{\beta}\, \|  (\mathcal{C}-\mathcal{C}_{\ell}) v_h \|_{L^2(\Omega)}
&\lesssim&  \theta_{\beta}^{\ell} \, (1+ \kappa \beta h) \, \left( \Vert v_h \Vert_{H^1_\kappa(\Omega)} + \sqrt{\beta} \, \Vert v_h \Vert_{L^2(\Omega)} \right).
\end{eqnarray*}
\end{theorem}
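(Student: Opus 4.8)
The plan is to deduce the estimate from the per-element bound of Lemma \ref{lemma:local-corrector-estimate} by a classical finite-overlap argument, exploiting throughout that $(\mathcal{C}-\mathcal{C}_{\ell})v_h$ lies in the detail space $W$, on which $\abeta(\cdot,\cdot)$ is coercive by Lemma \ref{lemma-abeta-coercive-on-W} (using \ref{A6}). Concretely, I would set $z := (\mathcal{C}-\mathcal{C}_{\ell})v_h = \sum_{T\in\mathcal{T}_h} z_T$ with $z_T := (\mathcal{C}_T - \mathcal{C}_{T,\ell})v_h$. Since $\mathcal{C}_T v_h\in W$ and $\mathcal{C}_{T,\ell} v_h\in W(\UN^{\ell}(T))\subset W$, all of $z$ and $z_T$ lie in $W$; moreover, both $\mathcal{C}_T v_h$ and $\mathcal{C}_{T,\ell} v_h$ satisfy the defining corrector equation tested against $W(\UN^{\ell}(T))$, so the Galerkin orthogonality $\abeta(z_T,w)=0$ holds for all $w\in W(\UN^{\ell}(T))$. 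Coercivity on $W$ then gives $|||z|||_{\beta}^2 = \abeta(z,z) = \sum_T \abeta(z_T,z)$, and it remains to bound each summand.

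First I would, for each $T$, replace $z$ in the $T$-th summand by a localized surrogate $\tilde z_T \in W(\UN^{\ell}(T))$ without changing the value of $\abeta(z_T,\cdot)$. The construction: fix a small integer $c$, take a Lipschitz cutoff $\eta_T$ with $\eta_T\equiv 1$ on $\UN^{\ell-c}(T)$, $\supp\eta_T\subset\UN^{\ell-c+1}(T)$ and $\|\nabla\eta_T\|_{L^{\infty}(\Omega)}\lesssim h^{-1}$, and put $\tilde z_T := \eta_T z - P_h^{-1}\bigl(P_h(\eta_T z)\bigr)$. Then $P_h\tilde z_T = 0$, so $\tilde z_T\in W$; because $z\in W=\ker P_h$ one has $P_h(\eta_T z) = -P_h\bigl((1-\eta_T)z\bigr)$, which is supported away from the core of $T$, so that the one-layer support growth of $P_h$ and $P_h^{-1}$ (cf.\ \eqref{supportgrowth-Ph}, \eqref{growth-Phinv}) yields $\supp\tilde z_T\subset\UN^{\ell}(T)$ and, for a slightly larger constant $c'$, that $z-\tilde z_T$ vanishes on $\UN^{\ell-c'}(T)$. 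Using $\abeta(z_T,\tilde z_T)=0$ I then estimate
\[
\abeta(z_T,z) \;=\; \abeta(z_T,z-\tilde z_T) \;=\; a_{\beta,\,\Omega\setminus\UN^{\ell-c'}(T)}(z_T, z-\tilde z_T) \;\le\; |||z_T|||_{\beta,\,\Omega\setminus\UN^{\ell-c'}(T)}\;|||z-\tilde z_T|||_{\beta,\,\Omega\setminus\UN^{\ell-c'}(T)}.
\]
For the first factor I combine Theorem \ref{theorem-decay-ideal-corrector} (which controls $z_T=\mathcal{C}_T v_h$ on $\Omega\setminus\UN^{\ell}(T)$) with Lemma \ref{lemma:local-corrector-estimate} (to absorb $z_T$ on the remaining annulus $\UN^{\ell}(T)\setminus\UN^{\ell-c'}(T)$) and obtain $|||z_T|||_{\beta,\,\Omega\setminus\UN^{\ell-c'}(T)}\lesssim\theta_{\beta}^{\ell-c'}(1+\kappa\beta h)\,|||v_h|||_{\beta,T}$. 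For the second factor I bound $|||z-\tilde z_T|||_{\beta}\lesssim|||z|||_{\beta,S_T}$ on a bounded-width annulus $S_T$ around the $\ell$-th layer of $T$; the only delicate point is the cutoff commutator $\tfrac1\kappa\|z\,\nabla\eta_T\|_{L^2(\Omega)}\lesssim\tfrac{1}{\kappa h}\|z\|_{L^2(\text{transition strip})}$, which is kept free of any $\kappa$-blow-up by the estimate $\|z\|_{L^2(K)} = \|z-P_h z\|_{L^2(K)}\lesssim h\,\|\nabla z\|_{L^2(\UN(K))}$ valid for $z\in W$, while the $L^2$- and $P_h^{-1}$-contributions are handled by the stability bounds \eqref{supportgrowth-Ph}, \eqref{growth-Phinv}.

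It then remains to sum over $T$: the elements being disjoint gives $\sum_T|||v_h|||_{\beta,T}^2 = |||v_h|||_{\beta}^2$, the bounded overlap of the patches (cf.\ \eqref{bounded-overlap}) gives $\sum_T|||z|||_{\beta,S_T}^2\lesssim C_{\ol,\ell}\,|||z|||_{\beta}^2$ with $C_{\ol,\ell}$ polynomial in $\ell$, and Cauchy--Schwarz over $T$ yields $|||z|||_{\beta}^2\lesssim\theta_{\beta}^{\ell-c'}\sqrt{C_{\ol,\ell}}\,(1+\kappa\beta h)\,|||v_h|||_{\beta}\,|||z|||_{\beta}$; dividing by $|||z|||_{\beta}$ and absorbing $\theta_{\beta}^{-c'}\sqrt{C_{\ol,\ell}}$ into a marginally enlarged decay rate (still $<1$, and, when $h\lesssim\kappa^{-1}$, still $\kappa$-independent, by the discussion after Theorem \ref{theorem-decay-ideal-corrector}) gives the first claim. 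For the $H^1_{\kappa}$-version I use that $(\mathcal{C}-\mathcal{C}_{\ell})v_h\in W$ to apply the lower bound $|||v|||_{\beta}^2\ge\tfrac14\|v\|_{H^1_{\kappa}(\Omega)}^2+\beta\|v\|_{L^2(\Omega)}^2$ of Lemma \ref{lemma-abeta-coercive-on-W} on the left-hand side, together with the unconditional upper bound $|||v_h|||_{\beta}\lesssim\|v_h\|_{H^1_{\kappa}(\Omega)}+\sqrt{\beta}\,\|v_h\|_{L^2(\Omega)}$ on the right-hand side. The hard part will be the construction and analysis of the localized surrogate $\tilde z_T$: one must simultaneously land in $W(\UN^{\ell}(T))$ (so that Galerkin orthogonality applies), make $z-\tilde z_T$ vanish near $T$ (so that it can be paired against the decay of $z_T$), and keep every stability constant independent of $\kappa$, $\beta$ and $h$ --- the latter relying on the $W$-specific local Poincar\'e estimate to tame the $h^{-1}$ coming from $\nabla\eta_T$, and on careful bookkeeping of the fixed number of extra mesh layers, which is ultimately harmlessly absorbed into the exponential decay rate $\theta_{\beta}$.
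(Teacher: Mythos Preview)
Your overall architecture is right, and the second estimate (passing from $|||\cdot|||_{\beta}$ to $H^1_{\kappa}$ via Lemma \ref{lemma-abeta-coercive-on-W} and Lemma \ref{Lemma_2.1}) is exactly what the paper does. The gap is in the per-element step: the claim that $|||z-\tilde z_T|||_{\beta}\lesssim|||z|||_{\beta,S_T}$ for a bounded-width annulus $S_T$ is false with your orientation of the cutoff. With $\eta_T\equiv 1$ on $\UN^{\ell-c}(T)$ and $\supp\eta_T\subset\UN^{\ell-c+1}(T)$, your surrogate $\tilde z_T=\eta_T z-P_h^{-1}P_h(\eta_T z)$ is supported in $\UN^{\ell}(T)$, so on all of $\Omega\setminus\UN^{\ell}(T)$ one has $z-\tilde z_T=z$. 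Hence $|||z-\tilde z_T|||_{\beta}\ge |||z|||_{\beta,\Omega\setminus\UN^{\ell}(T)}$, which is a global (not annular) quantity. After the Cauchy--Schwarz step over $T$ you then need $\sum_T |||z|||_{\beta,\Omega\setminus\UN^{\ell}(T)}^2$, which is of order $(\#\mathcal{T}_h)\,|||z|||_{\beta}^2$ and destroys the estimate.

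The paper avoids this by reversing the cutoff: it takes $\eta\equiv 0$ near $T$ and $\eta\equiv 1$ far from $T$, and exploits a \emph{different} orthogonality, namely that $\abeta(z_T,w)=0$ for every $w\in W$ supported in $\Omega\setminus\UN^{\ell}(T)$ (because $\abeta(\mathcal{C}_T v_h,w)=a_{\beta,T}(v_h,w)=0$ by the corrector equation with source on $T$, and $\abeta(\mathcal{C}_{T,\ell}v_h,w)=0$ by disjoint supports). Subtracting such a $w=\eta z-P_h^{-1}P_h(\eta z)$ from $z$ leaves a remainder supported in the \emph{patch} $\UN^{\ell+m}(T)$ for a fixed $m$, so the second factor becomes $|||z|||_{\beta,\UN^{\ell+m}(T)}$ and the bounded-overlap argument goes through. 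Note that your construction and the paper's are formally complementary (set $\eta=1-\eta_T$ and use $P_hz=0$), but the localization of the remainder, and hence the summability, only works with the paper's orientation.
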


\subsection{Localized approximations of Ginzburg--Landau minimizers}
\label{subsection:final-estimates-full-LOD}
With the previous considerations, we are now able to formulate the final localized orthogonal decomposition based on  the $\ell$-layer approximations $\mathcal{C}_{\ell} : V_h \rightarrow W$ of the ideal correctors that is given by \eqref{definition-C-ell}, i.e., $\mathcal{C}_{\ell} v_h := \sum_{T\in \mathcal{T}_h} \mathcal{C}_{T,\ell} v_h$. For that, recall that the element correctors $\mathcal{C}_{T,\ell} v_h \in W(\,\UN^{\ell}(T)\,)$ are given by \eqref{definition-CT-ell} and that they can be computed by solving problems on small patches $\UN^{\ell}(T)$. Compared to the original correctors which required to solve global problems, this reduces the computational complexity significantly. Even though the solving of all local corrector problems still introduces a considerable computational overhead, this is justified for our application of the Ginzburg--Landau equation since the correctors (respectively the LOD space) can be frequently reused within the iterations of, for example, a gradient method to find a minimizer of $E$. \\[0.3em] 
We are ready to introduce the LOD space. According to the characterization \eqref{characterizationVhLOD} of the ideal space $\Vlod$, we define the LOD space with localization parameter $\ell\in \mathbb{N}$ as
\begin{align}
\label{def-VLOD-ell}
\Vlodell=(\id - \mathcal{C}_{\ell})V_h.
\end{align}
The corresponding minimizers $u_{h,\ell}^{\LOD}\in \Vlodell$ of the energy on $\Vlodell$ are given by
\begin{align}
\label{LOD-minimizer-ell-def}
E(u_{h,\ell}^{\LOD}) = \min_{v\in \Vlodell} E(v).
\end{align}
In order to present an error estimate for the fully localized approximation $u_{h,\ell}^{\LOD}$, we start with quantifying the approximation properties of the Ritz-projection onto $\Vlodell$ similarily as in Lemma \ref{lemma:estimatesRitzprojectionLOD}. For that we set, as before, $\tilde{\beta}:=\tfrac{1}{2}+  \| \bfA \|_{L^\infty(\Omega)}^2$ and let $\RitzLODell : H^1(\Omega) \rightarrow \Vlodell$ be given by
\begin{align}
\label{RitzLODell-def}
a_{\tilde{\beta}} ( \RitzLODell v , w_h^{\LOD} ) = a_{\tilde{\beta}} ( v , w_h^{\LOD} )
\qquad
\mbox{for all } w_h^{\LOD} \in \Vlodell.
\end{align}
The following lemma quantifies the approximation properties.
\begin{lemma}
\label{lemma:estimatesRitzprojectionLOD-ell}
Assume \ref{A1}-\ref{A3} and \ref{A5}-\ref{A6}, let $\beta \ge 0$ be arbitrary and let $\Vlodell$ denote the corresponding LOD space given by \eqref{def-VLOD-ell} for some localization parameter $\ell \in \mathbb{N}$.\\[0.3em]
If $z \in H^1(\Omega)$ fulfills
\begin{align*}
\abeta(z , w ) = (f ,w)_{L^2(\Omega)} \qquad \mbox{ for all } w \in H^1(\Omega)
\end{align*}
for some $f \in L^2(\Omega)$, then we have the following error estimates for the Ritz-projection \eqref{RitzLODell-def}:
\begin{align*}
\Vert z - \RitzLODell z \Vert_{H^1_\kappa(\Omega)} \lesssim  h \,\kappa \, \| f  -\pi_h f \|_{L^2(\Omega)} +  \theta_{\beta}^{\ell} \, (1+ \kappa \beta h) \, \left( \Vert z \Vert_{H^1_\kappa(\Omega)} + \sqrt{\beta} \, \Vert z \Vert_{L^2(\Omega)} \right)
\end{align*}
and
\begin{align*}
 \| z - \RitzLODell z  \|_{L^2(\Omega)} \lesssim  \sqrt{1+ \beta} \left(  h \, \kappa\,  + \theta_{\beta}^{\ell} \, (1+ \kappa \beta h) \right) \|  z - \RitzLODell z\|_{H^1_{\kappa}(\Omega)}
\end{align*}
with the decay parameter $0<\theta_{\beta}<1$ from Theorem \ref{theorem-decay-ideal-corrector}.
\end{lemma}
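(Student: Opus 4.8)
The plan is to replay the proof of Lemma~\ref{lemma:estimatesRitzprojectionLOD} almost verbatim, inserting at the two decisive spots the localization error furnished by Theorem~\ref{theorem:C-Cell-error}: the ideal corrector $\mathcal{C}$ is replaced by $\mathcal{C}_{\ell}$, and the exact $a_\beta$-orthogonality to $W$ that was available before now survives only up to the exponentially small defect $\theta_\beta^\ell(1+\kappa\beta h)$. First observe that $\RitzLODell$ is well defined: for the fixed choice $\tilde{\beta}=\tfrac{1}{2}+\|\bfA\|_{L^\infty(\Omega)}^2$, Lemma~\ref{Lemma_2.1} makes $a_{\tilde{\beta}}(\cdot,\cdot)$ coercive and continuous on all of $H^1(\Omega)$, hence on the finite-dimensional subspace $\Vlodell=(\id-\mathcal{C}_{\ell})V_h$, with constants independent of $h$, $\kappa$ and $\beta$.

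For the $H^1_\kappa$-bound I would combine C\'ea's lemma with the equivalence of $a_{\tilde{\beta}}(\cdot,\cdot)$ and $\|\cdot\|_{H^1_\kappa(\Omega)}$ to reduce to a best-approximation estimate, then choose the explicit candidate $(\id-\mathcal{C}_{\ell})\pi_h z\in\Vlodell$ and split
\[
z-(\id-\mathcal{C}_{\ell})\pi_h z \;=\; \bigl(z-(\id-\mathcal{C})\pi_h z\bigr)\;+\;(\mathcal{C}-\mathcal{C}_{\ell})\pi_h z .
\]
The first summand is controlled by $h\kappa\,\|f-\pi_h f\|_{L^2(\Omega)}$ thanks to \eqref{eq3.1}, while the second is controlled by $\theta_\beta^\ell(1+\kappa\beta h)\bigl(\|\pi_h z\|_{H^1_\kappa(\Omega)}+\sqrt{\beta}\,\|\pi_h z\|_{L^2(\Omega)}\bigr)$ via the $H^1_\kappa$-form of Theorem~\ref{theorem:C-Cell-error} applied with $v_h=\pi_h z$. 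Finally I would absorb $\pi_h z$ using the $L^2$- and $H^1$-stability of $\pi_h$ on quasi-uniform meshes (guaranteed by~\ref{A5}), i.e.\ $\|\pi_h z\|_{H^1_\kappa(\Omega)}\lesssim\|z\|_{H^1_\kappa(\Omega)}$ and $\|\pi_h z\|_{L^2(\Omega)}\le\|z\|_{L^2(\Omega)}$, which yields the first asserted estimate.

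For the $L^2$-bound I would run an Aubin--Nitsche argument exactly as in Lemma~\ref{lemma:estimatesRitzprojectionLOD}: let $\xi\in H^1(\Omega)$ solve $a_{\tilde{\beta}}(v,\xi)=(v,\,z-\RitzLODell z)_{L^2(\Omega)}$ for all $v\in H^1(\Omega)$, so that by Galerkin orthogonality and continuity of $a_{\tilde{\beta}}(\cdot,\cdot)$ one has $\|z-\RitzLODell z\|_{L^2(\Omega)}^2 = a_{\tilde{\beta}}(z-\RitzLODell z,\xi-\RitzLODell \xi)\lesssim\|z-\RitzLODell z\|_{H^1_\kappa(\Omega)}\,\|\xi-\RitzLODell\xi\|_{H^1_\kappa(\Omega)}$. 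Rewriting the adjoint equation as $a_{\beta}(\xi,v)=(f_\xi,v)_{L^2(\Omega)}$ with $f_\xi:=(z-\RitzLODell z)+(\beta-\tilde{\beta})\xi$ brings $\xi$ under the hypothesis of the first estimate proved above; applying that estimate to $\xi$, bounding $\|f_\xi-\pi_h f_\xi\|_{L^2(\Omega)}\lesssim h\kappa\bigl(\|z-\RitzLODell z\|_{H^1_\kappa(\Omega)}+(1+\beta)\|\xi\|_{H^1_\kappa(\Omega)}\bigr)$ by the approximation property of $\pi_h$, and using the stability estimate $\|\xi\|_{H^1_\kappa(\Omega)}\lesssim\|z-\RitzLODell z\|_{L^2(\Omega)}$ (test the adjoint problem with $\xi$ and use coercivity), gives
\[
\|\xi-\RitzLODell\xi\|_{H^1_\kappa(\Omega)} \;\lesssim\; h^2\kappa^2\|z-\RitzLODell z\|_{H^1_\kappa(\Omega)}\;+\;\bigl((1+\beta)h^2\kappa^2+\theta_\beta^\ell(1+\kappa\beta h)(1+\sqrt{\beta})\bigr)\|z-\RitzLODell z\|_{L^2(\Omega)} .
\]
Substituting this, using $\|\cdot\|_{L^2(\Omega)}\le\|\cdot\|_{H^1_\kappa(\Omega)}$ and a Young inequality to absorb the mixed term $\|z-\RitzLODell z\|_{H^1_\kappa(\Omega)}\|z-\RitzLODell z\|_{L^2(\Omega)}$ into the left-hand side, produces the second asserted estimate; the leftover factor $(1+\beta)h^2\kappa^2$ is harmless once $\sqrt{1+\beta}\,h\kappa\lesssim1$, and if this fails then the inequality is trivial since its prefactor already exceeds $1$ whereas $\|z-\RitzLODell z\|_{L^2(\Omega)}\le\|z-\RitzLODell z\|_{H^1_\kappa(\Omega)}$ always holds.

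The difficulty is organizational rather than conceptual: one must keep the two parameters $\beta$ (which defines $\Vlodell$) and $\tilde{\beta}$ (which defines $\RitzLODell$) strictly apart throughout, route the localization factor $\theta_\beta^\ell(1+\kappa\beta h)$ carefully through the duality argument (it enters both directly via the first-part estimate for $\xi$ and indirectly via the adjoint source $f_\xi$), and check that every constant multiplying these factors stays independent of $h$, $\kappa$ and $\beta$. Using the $H^1_\kappa$-formulation of Theorem~\ref{theorem:C-Cell-error} instead of its energy-norm version is exactly what keeps the $\beta$-dependence of the final bounds of the advertised shape $\sqrt{1+\beta}$ and $1+\kappa\beta h$.
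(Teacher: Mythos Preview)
Your proposal is correct and follows essentially the same approach as the paper: C\'ea's lemma with the splitting $z-(\id-\mathcal{C}_\ell)\pi_h z = (z-(\id-\mathcal{C})\pi_h z)+(\mathcal{C}-\mathcal{C}_\ell)\pi_h z$ for the $H^1_\kappa$-bound, and an Aubin--Nitsche argument with the adjoint source $f_\xi=(z-\RitzLODell z)+(\beta-\tilde{\beta})\xi$ for the $L^2$-bound. The only cosmetic differences are that the paper inserts the explicit approximant $(\id-\mathcal{C}_\ell)\pi_h\xi$ rather than $\RitzLODell\xi$ in the duality step, and bounds the cross term $h^2\kappa^2|\beta-\tilde{\beta}|\,\|\xi\|_{H^1_\kappa}\|z-\RitzLODell z\|_{H^1_\kappa}$ directly via $\|\xi\|_{H^1_\kappa}\lesssim\|z-\RitzLODell z\|_{L^2}\le\|z-\RitzLODell z\|_{H^1_\kappa}$, which avoids your residual factor $(1+\beta)h^2\kappa^2$ and the ensuing case distinction; both routes are valid.
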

\begin{proof}
For $z \in H^1(\Omega)$ we define $z_{h,\ell}^\LOD := (\id - \mathcal{C}_{\ell}) \pi_h z$ and obtain with Lemma \ref{lemma:bestapprox-LOD} and Theorem \ref{theorem:C-Cell-error}
\begin{eqnarray}
\nonumber \| z - \RitzLODell z \|_{H^1_{\kappa}(\Omega)}  &\lesssim& \| z - z_{h,\ell}^\LOD \|_{H^1_{\kappa}(\Omega)} 
\,\,\,\le\,\,\, \| z - (\id - \mathcal{C}) \pi_h z  \|_{H^1_{\kappa}(\Omega)}  + \| (\mathcal{C} - \mathcal{C}_{\ell}) \pi_h z \|_{H^1_{\kappa}(\Omega)} \\
\nonumber&\lesssim& h \,\kappa \, \| f  -\pi_h f \|_{L^2(\Omega)} +  \theta_{\beta}^{\ell} \, (1+ \kappa \beta h) \, \left( \Vert \pi_h z \Vert_{H^1_\kappa(\Omega)} + \sqrt{\beta} \, \Vert \pi_h z \Vert_{L^2(\Omega)} \right)  \\
&\lesssim& h \,\kappa \, \| f  -\pi_h f \|_{L^2(\Omega)} +  \theta_{\beta}^{\ell} \, (1+ \kappa \beta h) \, \left( \Vert z \Vert_{H^1_\kappa(\Omega)} + \sqrt{\beta} \, \Vert z \Vert_{L^2(\Omega)} \right),
\label{proof-step-1-LODell}
\end{eqnarray}
where we used the $L^2$- and $H^1$-stability of $\pi_h$ for quasi-uniform meshes. To estimate $z - \RitzLODell z $ in $L^2(\Omega)$, we use again an Aubin-Nitsche argument where  $\xi \in H^1(\Omega)$ solves
\begin{align*}
a_{\tilde{\beta}} ( v , \xi ) = ( v , z - \RitzLODell z  )_{L^2(\Omega)} \qquad \mbox{for all } v\in H^1(\Omega).
\end{align*}
Analogously as in the proof of Lemma \ref{lemma:estimatesRitzprojectionLOD} we obtain with $f_{\xi}:= z - \RitzLODell z  + (\beta-\tilde{\beta}) \xi $:
\begin{eqnarray*}
\lefteqn{ \| z - \RitzLODell z  \|_{L^2(\Omega)}^2 \,\,\,=\,\,\, a_{\tilde{\beta}} (  z - \RitzLODell z  , \xi ) 
\,\,\,=\,\,\, a_{\tilde{\beta}} (  z - \RitzLODell z  , \xi - (\id - \mathcal{C}_{\ell}) \pi_h \xi ) }\\
&\overset{\eqref{proof-step-1-LODell}}{\lesssim}& \| z - \RitzLODell z  \|_{H^1_{\kappa}(\Omega)}
\left( h \,\kappa \, \| f_{\xi}  -\pi_h f_{\xi} \|_{L^2(\Omega)} +  \theta_{\beta}^{\ell} \, (1+ \kappa \beta h) \, \left( \Vert \xi \Vert_{H^1_\kappa(\Omega)} + \sqrt{\beta} \, \Vert \xi \Vert_{L^2(\Omega)} \right) \right) \\
&\lesssim& \| z - \RitzLODell z  \|_{H^1_{\kappa}(\Omega)}
\left( h \,\kappa \, \| f_{\xi}  -\pi_h f_{\xi} \|_{L^2(\Omega)} +  \theta_{\beta}^{\ell} \, (1+ \kappa \beta h) (1+\sqrt{\beta}) \, \| z - \RitzLODell z  \|_{L^2(\Omega)} \right) \\
&\lesssim& \| z - \RitzLODell z  \|_{H^1_{\kappa}(\Omega)}
\left( \|  z - \RitzLOD z\|_{H^1_{\kappa}(\Omega)}\,  h^2\, \kappa^2\,
+ |\beta -\tilde{\beta} |\,\| \xi \|_{H^1_{\kappa}(\Omega)} \,h^2\, \kappa^2\,  \right.\\
&\enspace&\hspace{100pt} \left. +  \theta_{\beta}^{\ell} \, (1+ \kappa \beta h) (1+\sqrt{\beta}) \, \| z - \RitzLODell z  \|_{L^2(\Omega)} \right) \\
&\lesssim&  h^2\, \kappa^2\, (1+ \beta) \|  z - \RitzLODell z\|_{H^1_{\kappa}(\Omega)}^2
+ \frac{1}{\delta} \theta_{\beta}^{2\ell} \, (1+ \kappa \beta h)^2 (1+\sqrt{\beta})^2 \| z - \RitzLODell z  \|_{H^1_{\kappa}(\Omega)}^2 \\
&\enspace&\hspace{20pt} +  \delta \| z - \RitzLODell z  \|_{L^2(\Omega)}^2,
\end{eqnarray*}
for any $\delta>0$ by Young's inequality. With an absorption argument we hence conclude
\begin{eqnarray*}
 \| z - \RitzLODell z  \|_{L^2(\Omega)}
&\lesssim& \sqrt{1+ \beta} \left(  h \, \kappa\,  + \theta_{\beta}^{\ell} \, (1+ \kappa \beta h) \right) \|  z - \RitzLODell z\|_{H^1_{\kappa}(\Omega)}.
\end{eqnarray*}
\end{proof}
With the above lemma, we can quantify the projection errors for minimizers $u$ of the Ginzburg-Landau energy.
\begin{lemma} \label{lem_ritz_proj_hk-localized}
Assume \ref{A1}-\ref{A3}, \ref{A5}-\ref{A6}, $\beta \ge 0$ and let $\Vlodell$ be given by \eqref{def-VLOD-ell} for $\ell \in \mathbb{N}$ and  let $u \in H^1(\Omega)$ denote a global minimizer of $E$. Then it holds
\begin{align*}
\Vert u - \RitzLODell u \Vert_{H^1_\kappa(\Omega)} \,\,\lesssim\,\,  (1+\beta) \, h^3\,\kappa^3 +  (1+\beta^{1/2}) \, (1+ \kappa \beta h)\, \theta_{\beta}^{\ell}
\end{align*}
and
\begin{align*}
 \| u - \RitzLODell u  \|_{L^2(\Omega)} \,\,\lesssim \,\,
 (1+\beta)^{3/2} \left( h^4\,\kappa^4 +  (1+ \kappa \beta h)^2 \, \theta_{\beta}^{2\ell}
 + (1+\beta)^{-1/2} \, (1+ \kappa \beta h)  \,h \, \kappa\, \theta_{\beta}^{\ell} \right).
\end{align*}
\end{lemma}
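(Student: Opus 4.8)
The plan is to mirror the proof of Lemma~\ref{lem_ritz_proj_hk} almost verbatim, replacing the ideal Ritz-projection $\RitzLOD$ by the localized one $\RitzLODell$ and hence invoking Lemma~\ref{lemma:estimatesRitzprojectionLOD-ell} in place of Lemma~\ref{lemma:estimatesRitzprojectionLOD}. No new analytic ingredient is needed; the work lies in collecting the various powers of $h$, $\kappa$, $\beta$ and $\theta_\beta$.

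First I would rewrite $u$ as the solution of a source problem. Since any global minimizer $u$ solves the Ginzburg--Landau equation~\eqref{eq_GL}, the same computation as in the proof of Lemma~\ref{lem_ritz_proj_hk} yields $\abeta(u,w) = (f_u,w)_{L^2(\Omega)}$ for all $w\in H^1(\Omega)$, with $f_u := (\beta - |u|^2 + 1)\,u$, and Theorem~\ref{estimates_u} gives $f_u \in H^2(\Omega)$ together with $|f_u|_{H^s(\Omega)} \lesssim (1+\beta)\,\kappa^s$ for $s=0,1,2$ as well as $\|u\|_{H^1_\kappa(\Omega)} \lesssim \|u\|_{L^2(\Omega)} \lesssim 1$. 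Thus $u$ is an admissible $z$ in Lemma~\ref{lemma:estimatesRitzprojectionLOD-ell} with $f = f_u$. Applying the first bound there, the consistency term is controlled by the standard $L^2$-projection estimate $\|f_u - \pi_h f_u\|_{L^2(\Omega)} \lesssim h^2 |f_u|_{H^2(\Omega)} \lesssim (1+\beta)\,h^2\kappa^2$, contributing $(1+\beta)\,h^3\kappa^3$, while the localization term $\theta_\beta^\ell(1+\kappa\beta h)(\|u\|_{H^1_\kappa(\Omega)} + \sqrt\beta\,\|u\|_{L^2(\Omega)})$ is bounded by $(1+\beta^{1/2})(1+\kappa\beta h)\,\theta_\beta^\ell$ using the size estimates for $u$. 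Summing the two gives the first claimed inequality.

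For the $L^2$-estimate I would feed this $H^1_\kappa$-bound into the second inequality of Lemma~\ref{lemma:estimatesRitzprojectionLOD-ell}, namely $\|u-\RitzLODell u\|_{L^2(\Omega)} \lesssim \sqrt{1+\beta}\,(h\kappa + \theta_\beta^\ell(1+\kappa\beta h))\,\|u-\RitzLODell u\|_{H^1_\kappa(\Omega)}$, and then multiply the two factors out. The four resulting products are reorganized using the resolution condition $h\kappa\lesssim 1$ from~\ref{A6} (so that $h^3\kappa^3\lesssim h\kappa$ and $h^2\kappa^2\lesssim 1$), the trivial bound $\theta_\beta^\ell\le 1$, and the elementary inequality $(1+\beta^{1/2})^2 \lesssim 1+\beta$; what remains are exactly the three displayed terms $(1+\beta)^{3/2}h^4\kappa^4$, $(1+\beta)^{3/2}(1+\kappa\beta h)^2\theta_\beta^{2\ell}$, and $(1+\beta)(1+\kappa\beta h)\,h\kappa\,\theta_\beta^\ell$ (the last written with the prefactor $(1+\beta)^{3/2}(1+\beta)^{-1/2}$ as in the statement).

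The main obstacle is purely organizational: one has to verify that every mixed cross term — those pairing a factor $\theta_\beta^\ell$ with a power of $h\kappa$ that is not already in the list — is dominated by one of the three target terms, which ultimately rests on the smallness $h\kappa\lesssim 1$ guaranteed by~\ref{A6}. Apart from Lemma~\ref{lemma:estimatesRitzprojectionLOD-ell}, the Ginzburg--Landau equation, and the a priori bounds of Theorem~\ref{estimates_u}, no further tools enter.
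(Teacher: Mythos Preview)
Your proposal is correct and follows essentially the same route as the paper: express $u$ via the Ginzburg--Landau equation as $\abeta(u,\cdot)=(f_u,\cdot)_{L^2}$ with $f_u=(\beta-|u|^2+1)u$, invoke Lemma~\ref{lemma:estimatesRitzprojectionLOD-ell} with the $H^2$-bounds on $f_u$ from Theorem~\ref{estimates_u}, and then multiply out the $L^2$-estimate. The only point worth making explicit is the handling of the cross term $(1+\beta)^{3/2}h^3\kappa^3(1+\kappa\beta h)\theta_\beta^\ell$: it is absorbed not by $h^3\kappa^3\lesssim h\kappa$ alone (that would leave an excess factor $(1+\beta)^{1/2}$) but by Young's inequality together with $h^2\kappa^2\lesssim 1$, splitting it into the $h^4\kappa^4$ and $\theta_\beta^{2\ell}$ terms.
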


\begin{proof}
We proceed as in the proof of Lemma \ref{lem_ritz_proj_hk} and express $u \in H^1(\Omega)$ as
	\begin{align*}
\abeta (u,w)  &=  (\, (\beta - \vert u \vert^2 +1) u , w )_{L^2(\Omega)}
\qquad
\mbox{for all } w \in H^1(\Omega),
\end{align*}
where $(\beta - \vert u \vert^2 +1) u \in H^2(\Omega)$ and $| (\beta - \vert u \vert^2 +1) u |_{H^s(\Omega)} \lesssim (\beta \,  + 1) \kappa^s$ for $s=0,1,2$. Hence, Lemma \ref{lemma:estimatesRitzprojectionLOD-ell}  yields
\begin{eqnarray*}
\lefteqn{ \Vert u - \RitzLODell u \Vert_{H^1_\kappa(\Omega)} } \\
&\lesssim&
h \,\kappa \, \| (\id - \pi_h) \left((\beta - \vert u \vert^2 +1) u\right) \|_{L^2(\Omega)}  +  \theta_{\beta}^{\ell} \, (1+ \kappa \beta h) \, \left( \Vert u \Vert_{H^1_\kappa(\Omega)} + \sqrt{\beta} \, \Vert u \Vert_{L^2(\Omega)} \right) \\
  &\lesssim&
 (1+\beta) \, h^3\,\kappa^3 + (1+\beta)^{1/2} \, (1+ \kappa \beta h)\, \theta_{\beta}^{\ell}.
\end{eqnarray*}
The $L^2$-estimate follows readily with the second part of Lemma \ref{lemma:estimatesRitzprojectionLOD-ell} as
\begin{eqnarray*}
\lefteqn{ \| u - \RitzLODell u  \|_{L^2(\Omega)} }\\
&\lesssim& (1+\beta)^{1/2}  \left(  h \, \kappa\,  + \theta_{\beta}^{\ell} \, (1+ \kappa \beta h) \right)
\left( (\beta \,  + 1) \, h^3\,\kappa^3 +  \theta_{\beta}^{\ell} \, (1+ \kappa \beta h) \, (1+\beta)^{1/2}  \right) \\
&\lesssim& (1+\beta)^{3/2} h^4\,\kappa^4 \,+\, (1+\beta)^{3/2}  \theta_{\beta}^{2\ell} \, (1+ \kappa \beta h)^2 \,+\, (1+\beta)  h \, \kappa\, \theta_{\beta}^{\ell} \, (1+ \kappa \beta h) .
\end{eqnarray*}
\end{proof}
We are now prepared to formulate the final approximation result in the fully localized setting.
\begin{theorem}
\label{abstract-error-estimate-idealLOD-ell}
Assume \ref{A1}-\ref{A6} and let $u_{h,\ell}^{\LOD} \in \Vlod$ denote a minimizer of the Ginzburg-Landau energy in the LOD space as in \eqref{LOD-minimizer-ell-def}. Then, for all sufficiently small mesh sizes $h$, in particular
\begin{align*}
h \lesssim \, \min\left\{  (1+\beta)^{-1/2} , \beta^{-1} \right\} \kappa^{-1},
\end{align*}
there exists a unique minimizer $u \in H^1(\Omega)$ with $(u_{h}^{\LOD} , \ci u)_{L^2(\Omega)}=0$ (and $(u_{h}^{\LOD} , u)_{L^2(\Omega)}\ge 0$) and such that
\begin{eqnarray*}
\| u - u_{h}^{\LOD} \|_{H^1_\kappa(\Omega)} &\lesssim& \left( 1 + h \, \rho(\kappa) \kappa + \theta_{\beta}^{\ell} \, \rho(\kappa) \sqrt{1+\beta} \right) \, \| u - \RitzLODell \|_{H^1_\kappa(\Omega)}.
\end{eqnarray*}
Furthermore, with Lemma \ref{lem_ritz_proj_hk-localized} we have
\begin{eqnarray*}
\| u - u_{h}^{\LOD} \|_{H^1_\kappa(\Omega)} &\lesssim& \left( 1 + h \, \rho(\kappa) \kappa + \theta_{\beta}^{\ell} \, \rho(\kappa) \sqrt{1+\beta} \right) \,
\left( (1+\beta) \, h^3\,\kappa^3 +  (1+\beta^{1/2}) \, \theta_{\beta}^{\ell} \right).
\end{eqnarray*}
In the regime $\beta \lesssim 1$, the estimate simplifies to
\begin{eqnarray*}
\| u - u_{h}^{\LOD} \|_{H^1_\kappa(\Omega)} &\lesssim&
h^3\,\kappa^3 \,+\, \theta_{\beta}^{\ell} \,+\, \rho(\kappa)  ( h\,\kappa + \theta_{\beta}^{\ell} ) \, ( h^3\,\kappa^3 + \theta_{\beta}^{\ell} ).
\end{eqnarray*}
Finally, if $\ell \gtrsim 3 \log( \theta_{\beta} )^{-1} \log( h \kappa)$ we have
\begin{eqnarray*}
\| u - u_{h}^{\LOD} \|_{H^1_\kappa(\Omega)} &\lesssim&
h^3\,\kappa^3 \,+\, h^4\,\kappa^4 \, \rho(\kappa).
\end{eqnarray*}
\end{theorem}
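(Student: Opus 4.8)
The plan is to mirror the proof of Lemma~\ref{abstract-error-estimate-idealLOD} line by line, simply replacing the ideal objects $\Vlod$, $\RitzLOD$, $\RitzLODperp$ by their localized counterparts $\Vlodell$, $\RitzLODell$ (from \eqref{RitzLODell-def}) and $\RitzLODperpell:\orthiu \to \Vlodell\cap\orthiu$, the $a_{\tilde\beta}$-orthogonal projection onto $\Vlodell\cap\orthiu$. The first thing I would check is that the abstract approximation result \cite[Proposition~5.6]{DoeHe23} is still applicable, i.e.\ that $\{\Vlodell\}$ is a dense sequence of subspaces of $H^1(\Omega)$; this follows from the density \eqref{density-of-LOD-spaces} of $\{\Vlod\}$ together with the localization estimate of Theorem~\ref{theorem:C-Cell-error} once $\ell$ is coupled to $h$ (e.g.\ $\ell \gtrsim \log(h\kappa)$). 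With density in hand, \cite[Proposition~5.6]{DoeHe23} yields a unique exact minimizer $u$ with $(u_{h,\ell}^{\LOD},\ci u)_{L^2(\Omega)}=0$, $(u_{h,\ell}^{\LOD},u)_{L^2(\Omega)}\ge 0$, and
\begin{align*}
\| u - u_{h,\ell}^{\LOD} \|_{H^1_\kappa(\Omega)} \lesssim \; &\| u - \RitzLODperpell u \|_{H^1_\kappa(\Omega)} + \rho(\kappa)\, \| u - \RitzLODperpell u \|_{L^2(\Omega)} \\
&+ \rho(\kappa)\left( \| u - u_{h,\ell}^{\LOD} \|_{L^4(\Omega)}^2 + \| u - u_{h,\ell}^{\LOD} \|_{L^6(\Omega)}^3 \right).
\end{align*}

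The second step is to transfer the estimates from $\RitzLODperpell$ to $\RitzLODell$. Exactly as in Lemma~\ref{abstract-error-estimate-idealLOD}, I would use $\RitzLODell(\ci z)=\ci\RitzLODell z$ (immediate from \eqref{RitzLODell-def}), the $H^1_\kappa$-quasi-optimality of $\RitzLODperpell$ on $\orthiu$, and the $L^2$-estimate of Lemma~\ref{lemma:estimatesRitzprojectionLOD-ell}; under the stated smallness $h\lesssim \min\{(1+\beta)^{-1/2},\beta^{-1}\}\kappa^{-1}$ the denominator $|1-Ch\kappa\sqrt{1+\beta}|$ stays bounded away from zero, so that $\| z-\RitzLODperpell z\|_{H^1_\kappa(\Omega)}\lesssim \| z-\RitzLODell z\|_{H^1_\kappa(\Omega)}$ for all $z\in\orthiu$. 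An Aubin--Nitsche argument with an adjoint solution $\xi$ that simultaneously solves an $a_\beta$-problem with right-hand side of the form $z-\RitzLODperpell z+(\beta-\tilde\beta)\xi+\alpha(z,u,\xi)\,\ci u$ (as in \cite[proof of Lemma~2.8]{DoeHe23}) then upgrades this to the $L^2$-bound $\| z-\RitzLODperpell z\|_{L^2(\Omega)}\lesssim \sqrt{1+\beta}\,(h\kappa+\theta_\beta^\ell(1+\kappa\beta h))\,\| z-\RitzLODell z\|_{H^1_\kappa(\Omega)}$. Inserting both bounds, treating the $L^4$- and $L^6$-terms by Sobolev embedding and absorbing them for $h$ small enough (which simultaneously pins down the unique $u$), one obtains the first displayed estimate of the theorem,
\begin{align*}
\| u - u_{h,\ell}^{\LOD} \|_{H^1_\kappa(\Omega)} \lesssim \left( 1 + h\,\rho(\kappa)\,\kappa + \theta_\beta^\ell\,\rho(\kappa)\sqrt{1+\beta} \right)\| u - \RitzLODell u \|_{H^1_\kappa(\Omega)}.
\end{align*}

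Finally, I would substitute the concrete bound for $\| u-\RitzLODell u\|_{H^1_\kappa(\Omega)}$ from Lemma~\ref{lem_ritz_proj_hk-localized} to get the second displayed estimate, specialize to $\beta\lesssim 1$ and multiply out the factors for the third, and then choose $\ell\gtrsim 3\log(\theta_\beta)^{-1}\log(h\kappa)$ so that $\theta_\beta^\ell\lesssim (h\kappa)^3$; with this choice every term carrying a factor $\theta_\beta^\ell$ is dominated by $h^3\kappa^3$ (respectively by $h^4\kappa^4\rho(\kappa)$), leaving the last estimate. The main obstacle is the bookkeeping in the second step: one has to track carefully how the localization error $\theta_\beta^\ell(1+\kappa\beta h)$ propagates through the $\orthiu$-restriction and the Aubin--Nitsche duality and combines with the $h\kappa\sqrt{1+\beta}$ contributions, and one must verify that the smallness hypotheses on $h$ (together with the implicit coupling forcing $\ell\to\infty$) genuinely suffice for all absorption arguments; the density of $\{\Vlodell\}$ required to invoke \cite[Proposition~5.6]{DoeHe23} is the only other point needing a short, but non-automatic, justification.
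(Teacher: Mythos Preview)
Your proposal is correct and follows exactly the approach the paper indicates: the paper's own proof consists of a single sentence stating that one reuses \cite[Proposition~5.6]{DoeHe23} and repeats the arguments of Lemma~\ref{abstract-error-estimate-idealLOD} with the localized objects, which is precisely what you outline. Your additional remarks on verifying density of $\{\Vlodell\}$ and on tracking the factor $(1+\kappa\beta h)$ (which collapses under the assumption $h\lesssim\beta^{-1}\kappa^{-1}$) are the right points to flag, and they do not deviate from the paper's intended route.
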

\begin{proof}
The proof is again based on \cite[Proposition 5.6]{DoeHe23} with analogous arguments as in the proof of Lemma \ref{abstract-error-estimate-idealLOD}.
\end{proof}
The theorem shows that if the localization parameter $\ell$ is sufficiently large (with a constraint only depending on the size of $h \kappa$), we recover the same approximation properties as for the ideal method in Theorem \ref{theorem-ideal-LOD}. Furthermore, we again observe that large values of $\beta$ are not desirable as it appears as a multiplicative pollution factor in the error estimates and imposes further constraints for $\ell$, which becomes at least $\ell \gtrsim 3 \log( \theta_{\beta} )^{-1} \log( h \kappa (1+\beta)^{1/6})$.

\section{Numerical Experiments}
\label{section:numerical_experiments}

In this section we verify the theoretical findings with numerical experiments. Therefore, we first describe the specific problem setting for our tests. The different tests are divided according to the influence of the examined parameters. The implementation of the experiments is available as a MATLAB code on \url{https://github.com/cdoeding/GinzburgLandauLOD}.

\subsection{Problem setting}
According to the definition of $\Vlodell$ in \eqref{def-VLOD-ell}, the LOD space is constructed from linear Lagrange finite elements on a quasi-uniform triangular mesh $\mathcal{T}_h$ with mesh size $h$. On $\Vlodell$, we compute a minimizer $u_{h,\ell}^{\LOD}$ fulfilling \eqref{LOD-minimizer-ell-def} with a simple gradient descent method as described in \cite{DoeHe23} with a tolerance of $\delta =10^{-10}$ w.r.t. the difference in energy of two consecutive approximations. We fix the domain $\Omega = [0,1]^2 \subset \R^2$ and consider the magnetic potential
$$
\bfA(x,y):= \sqrt{2} \begin{pmatrix} \sin (\pi x) \cos (\pi y)\\ -\cos (\pi x) \sin (\pi y) \end{pmatrix}
$$
that fulfills our assumption \ref{A2}. For a practical implementation of the LOD we need two different meshes. The aforementioned \quotes{coarse} mesh $\mathcal{T}_h$ and an additional (quasi-uniform) \quotes{fine} mesh $\mathcal{T}_{h_{\fine}}$ on which we solve the local corrector problems \eqref{definition-CT-ell} whose solutions are required to assemble $\Vlodell$ via \eqref{def-VLOD-ell}. For more details on the implementation, we refer to \cite{EHMP19}. The fine mesh size is consistently chosen as $h_{\fine}= 2^{-10}$ throughout all the experiments, which is significantly smaller than the coarse meshes sizes which fulfill $h\ge 2^{-6}$. Furthermore, all reference minimizers (for the various $\kappa$ values) are computed in the $\mathbb{P}^1$-FE space $V_{h_{\fine}}$. We shall denote these reference minimizers by
\begin{align*}
u_{\text{\tiny ref}}^{\FEM} := \underset{v \in V_{h_{\fine}}}{\mbox{arg\,min}} \, E(v).
\end{align*}
 In order to ensure that our numerically computed errors are not polluted by inconsistent phase factors (due to the missing uniqueness of minimizers), we always adjust the phase of $u_{h,\ell}^{\LOD}$ such that $\Re \int_{\Omega} u_{\text{\tiny ref}}^{\FEM} (u_{h,\ell}^{\LOD})^{\ast}\,\,\mbox{d}x  \ge 0$ and $\Im \int_{\Omega} u_{\text{\tiny ref}}^{\FEM} (u_{h,\ell}^{\LOD})^{\ast}\,\,\mbox{d}x = 0$. This can be easily achieved by multiplying $u_{h,\ell}^{\LOD}$ with $\tfrac{\alpha}{|\alpha|}$ where $\alpha:= \int_{\Omega} u_{\text{\tiny ref}}^{\FEM} (u_{h,\ell}^{\LOD})^{\ast}\,\,\mbox{d}x$. We exploit this in the error computation without further mentioning. All errors are measured in the $H^1_\kappa$-norm and we mainly study $\varepsilon^{\LOD}_h := \| u_{h,\ell}^{\LOD} - u_{\text{\tiny ref}}^{\FEM} \|_{H^1_{\kappa}(\Omega)}$ and $\varepsilon^{\FEM}_h := \| u_{h}^{\FEM} - u_{\text{\tiny ref}}^{\FEM} \|_{H^1_{\kappa}(\Omega)}$,
where the coarse mesh size in our experiments varies between $h = 2^{-2}$ and $h = 2^{-6}$. We also consider various parameter constellations for $\kappa$, $\beta$ and $\ell$.

Figure \ref{fig:vortices_reference} shows the reference solution $u_{\text{\tiny ref}}^{\FEM}$, and in particular the  Abrikosov vortex lattice, for the values $\kappa=8,12,16,20,32$ and Table \ref{tab:energy_reference_and_ev} lists the corresponding minimal energy values. We further compute the spectrum of the second derivative $E''(u_{\text{\tiny ref}}^{\FEM})$ to verify assumption \ref{A4} numerically. The five smallest eigenvalues are also shown in Table \ref{tab:energy_reference_and_ev} and we can clearly identify the zero eigenvalue $\lambda_1$ followed by four positive eigenvalues $\lambda_2,\dots,\lambda_5$. Finally, note that the vortex pattern for $\kappa = 20$ is remarkably non-symmetric. Due to the symmetry of the vector potential and the domain we see that rotations by an angle of $\pi/2$ are again minimizers of the Ginzburg--Landau energy. Thus, there are four isolated minimizers of the same energy with this significant vortex pattern, and indeed we have found them in numerical experiments.\\[-0.5em]

\begin{figure}[h]
\centering
\begin{minipage}{0.3\textwidth}
\includegraphics[scale=0.25]{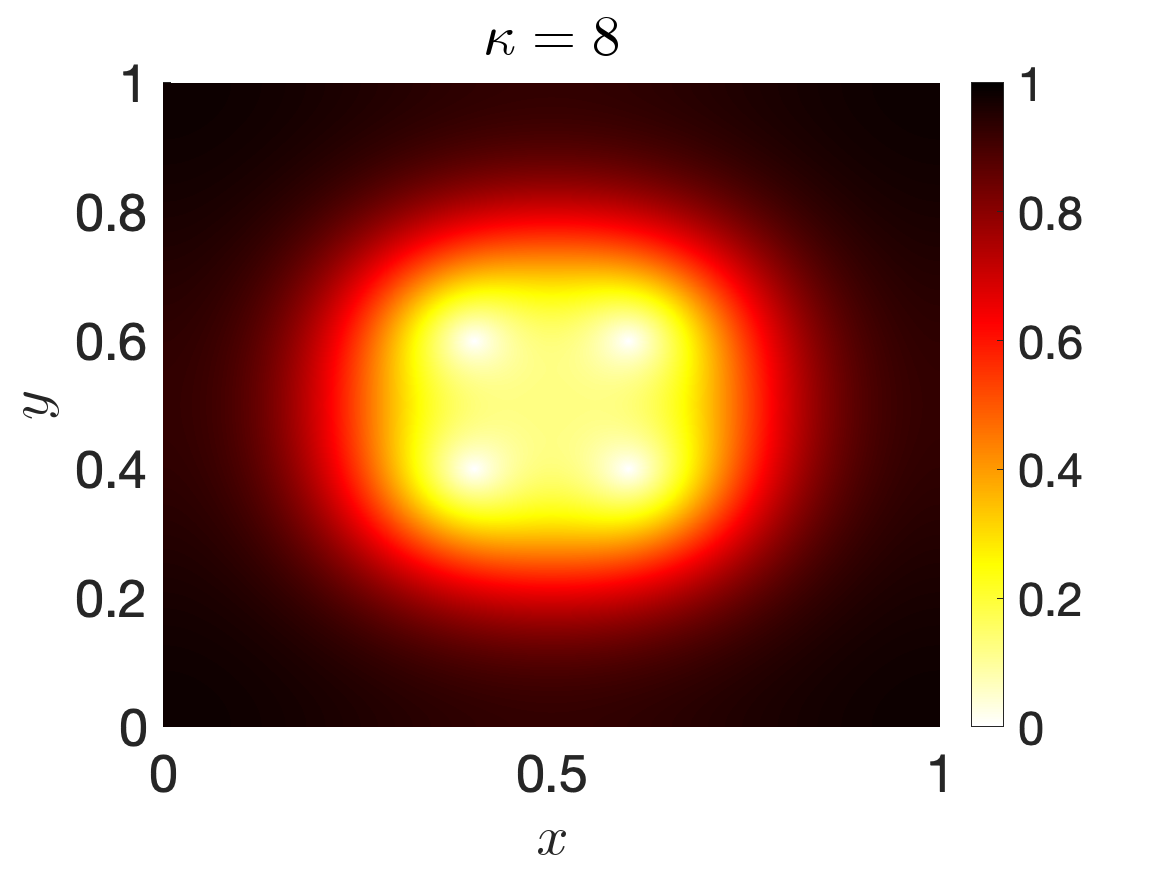}
\end{minipage}
\begin{minipage}{0.3\textwidth}
\includegraphics[scale=0.25]{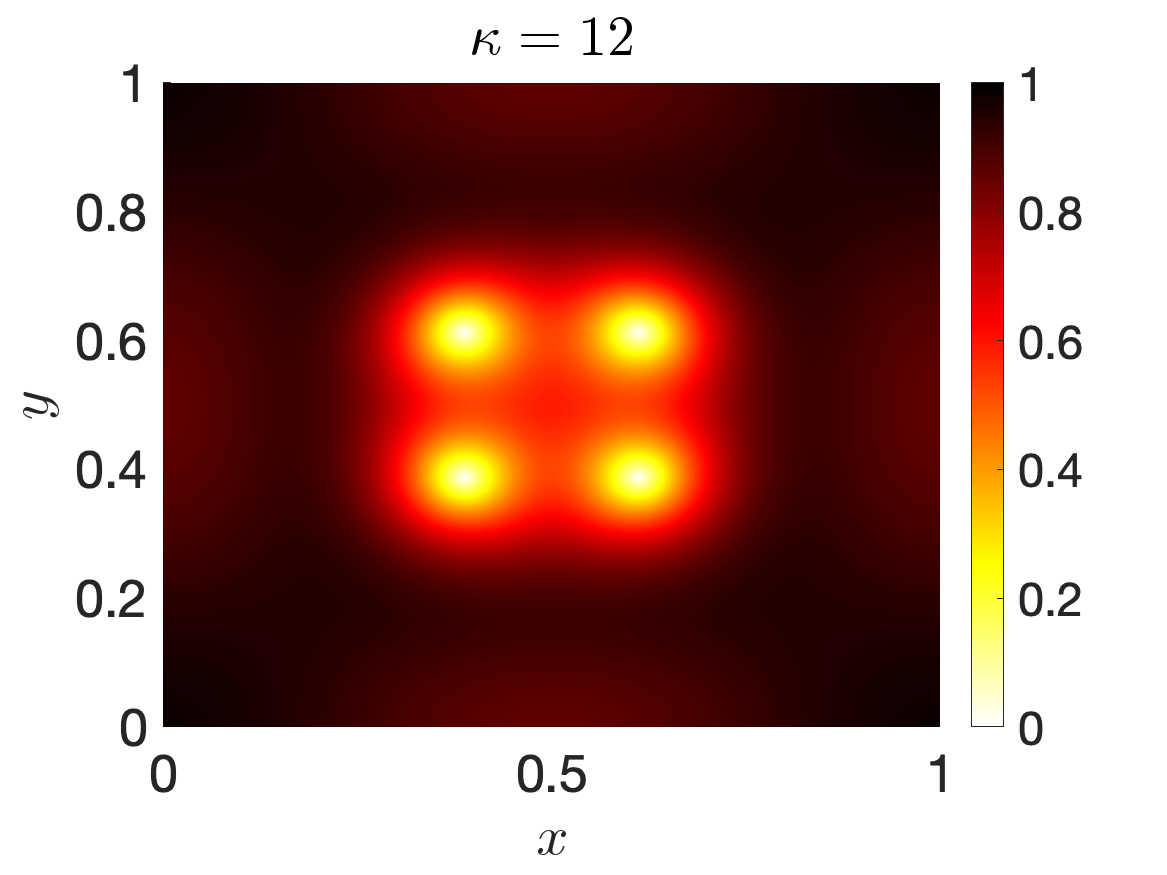}
\end{minipage}
\begin{minipage}{0.3\textwidth}
\includegraphics[scale=0.25]{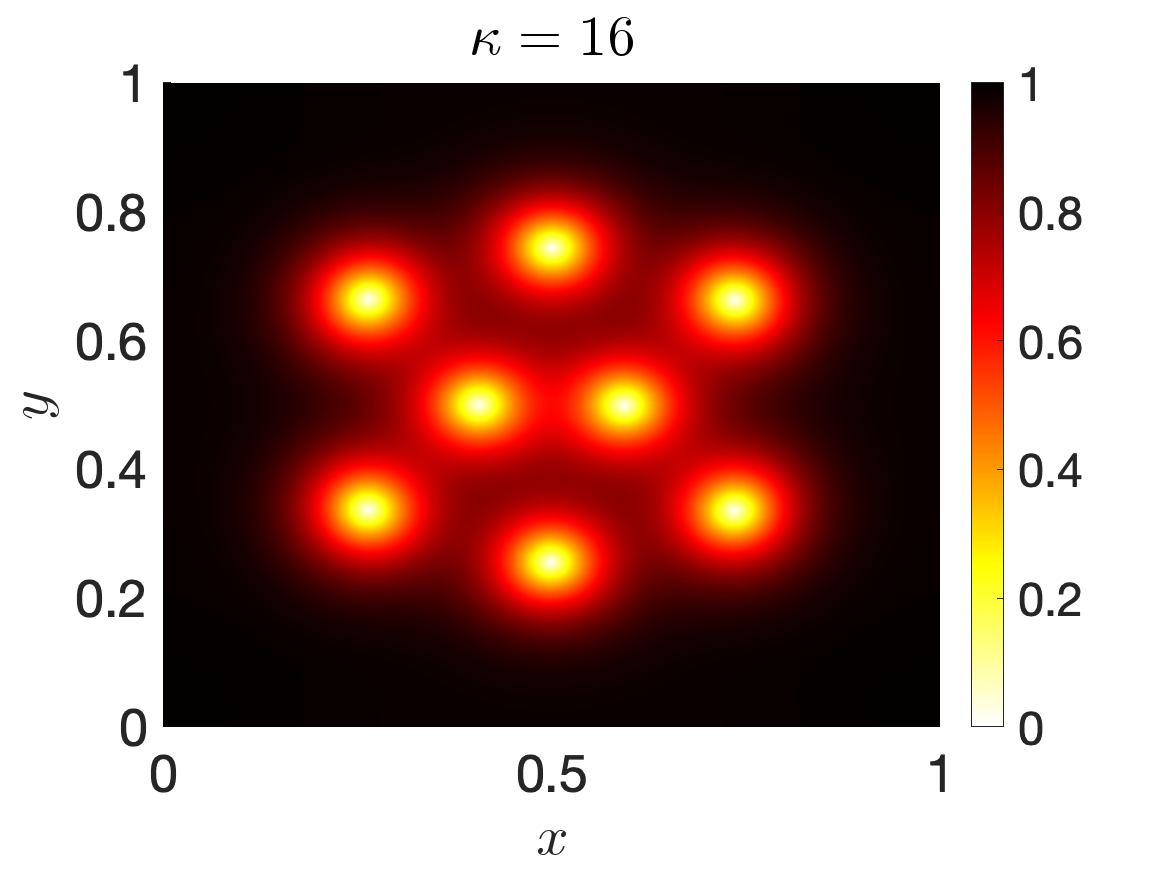}
\end{minipage} \\
\begin{minipage}{0.3\textwidth}
\includegraphics[scale=0.25]{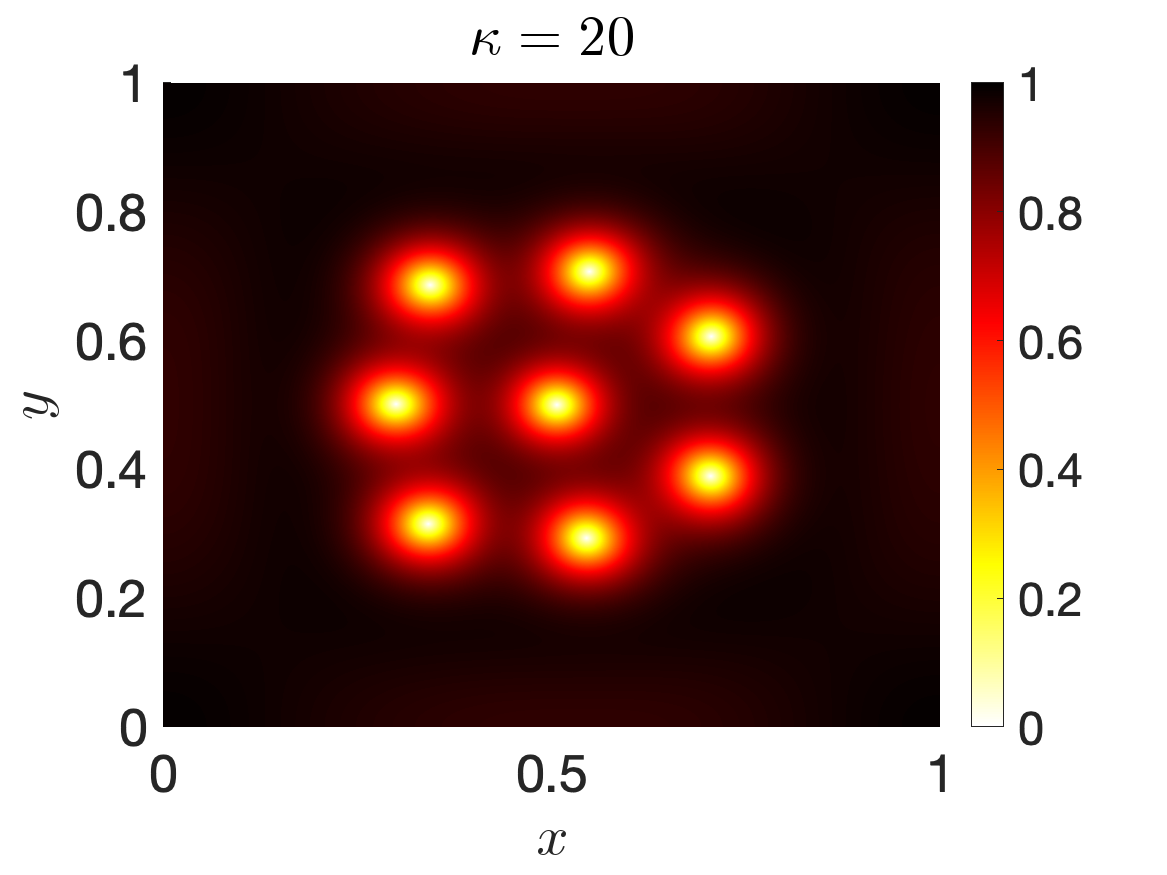}
\end{minipage}
\begin{minipage}{0.3\textwidth}
\includegraphics[scale=0.25]{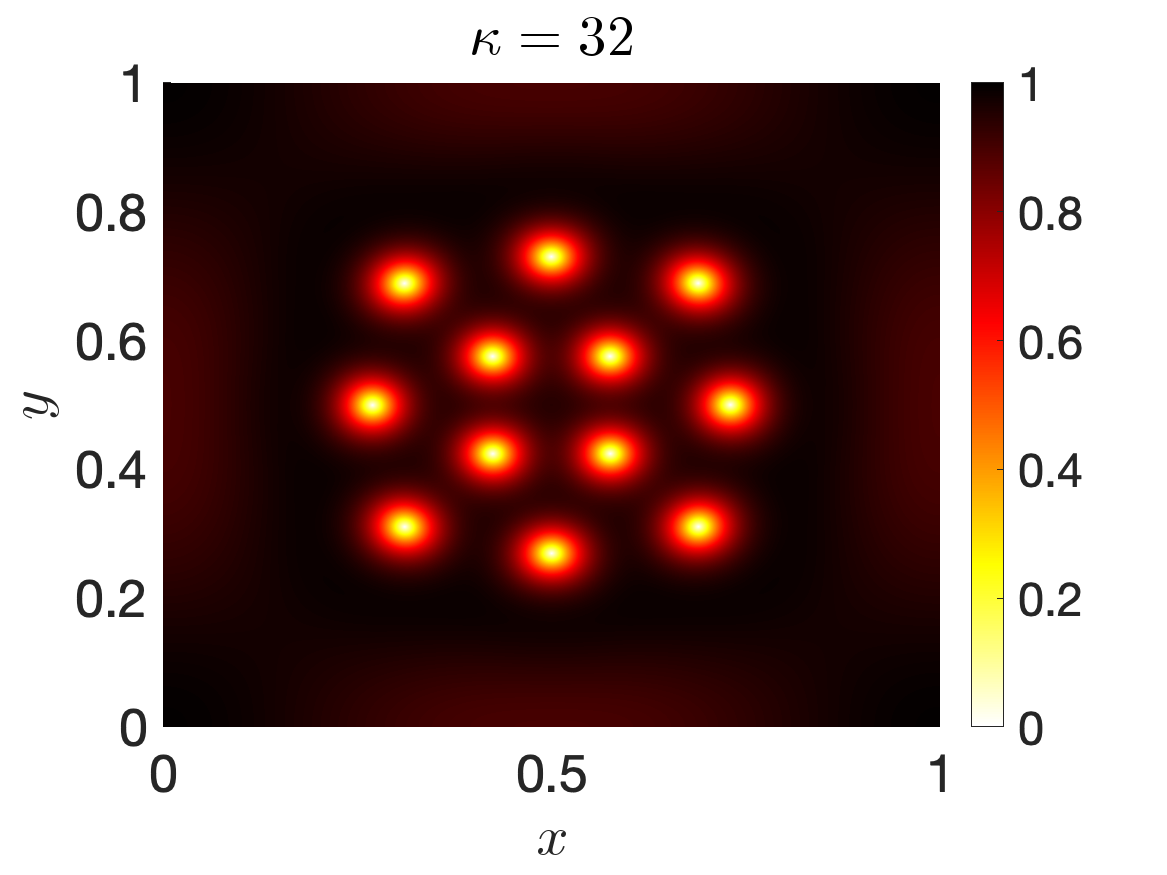}
\end{minipage}
	\caption{Vortex patterns of the reference solution $|u_{\text{\tiny ref}}^{\FEM}|$ with $\kappa=8,12,16,20,32$.}
	\label{fig:vortices_reference}
\end{figure}

\renewcommand*{\arraystretch}{1.2}
\begin{table}[h]
	\centering

	\begin{tabular}[h]{ |c|c|c|c|c|c| }
	\hline
		$\kappa$ & 8 & 12 & 16 & 20 & 32 \\
		\hlinewd{1.2pt}
		$E(u_{\text{\tiny ref}}^{\FEM})$ 
		& 1.2853e-01%
		& 1.0392e-01 %
		& 8.8528e-02 %
		& 7.5310e-02 %
		& 6.3582e-02 \\ %
		\hlinewd{1.2pt} 
		$\rho(\kappa)^{-1}$ 
		& 1.7124e-02 
		& 1.2444e-02 
		& 1.1930e-03 
		& 4.0621e-05 
		& 3.8279e-04 \\ 
		\hlinewd{1.2pt}
		$\lambda_1$ & $\sim 10^{-12}$ & $\sim 10^{-12}$ &  $\sim 10^{-10}$ & $\sim 10^{-8}$	& $\sim 10^{-12}$ \\ %
		\hline $\lambda_2$ & 4.1479e-02 & 2.2205e-02 &  1.9948e-03 & 6.0232e-05	& 5.6319e-04\\
		\hline $\lambda_3$ & 4.1480e-02 & 2.2577e-02 &  2.7939e-03 & 7.3989e-03	& 2.7402e-03\\
		\hline $\lambda_4$ & 1.1784e-01 & 2.2578e-02 &  9.2460e-03 & 7.4579e-03	& 2.7410e-03\\
		\hline $\lambda_5$ & 1.2045e-01 & 6.9642e-02 &  9.6807e-03 & 1.7718e-02	& 7.4593e-03\\
		\hline
	\end{tabular}
	\caption{Minimal energy values $E(u_{\text{\tiny ref}}^{\FEM})$, the coercivity constant $\rho(\kappa)^{-1}$ and the five smallest eigenvalues $\lambda_1,\dots,\lambda_5$ of $E''(u_{\text{\tiny ref}}^{\FEM})$ for the reference solution $u_{\text{\tiny ref}}^{\FEM}$ for $\kappa=8,12,16,20,32$.}
	\label{tab:energy_reference_and_ev}
\end{table}
\renewcommand*{\arraystretch}{1}

Last regarding the problem setting, we also checked the size of the coercivity constant $\rho(\kappa)^{-1}$ from Proposition \ref{coercivity_secE_u}. Note that the constant can be computed by solving the eigenvalue problem for $v_i \in H^1(\Omega)$ and $\mu_i \in \mathbb{R}$ such that
\begin{align*}
\langle E^{\prime\prime}(u) v_i , w \rangle = \mu_i \, ( v_i , w )_{H^1_{\kappa}(\Omega)}\qquad \mbox{for all } w \in H^1(\Omega).
\end{align*}
In this case, the smallest eigenvalue is $\mu_1 = 0$ and belongs to the eigenfunction $v_1 = \ci u$, whereas the second smallest eigenvalue $\mu_2$ is just the coercivity constant, i.e. $\mu_2 = \rho(\kappa)^{-1}$. The values for different $\kappa$ are shown in Table \ref{tab:energy_reference_and_ev} and we observe that they roughly behave as $\rho(\kappa) \sim \kappa^{\alpha}$ for $\alpha \approx 3.7$. Hence, we see the polynomial growth of $\rho(\kappa)$ conjectured earlier.

\subsection{Convergence with respect to $h$ and $\kappa$}
\label{subsection:numerics:hvskappa}
We start with investigating the error behavior with respect to $h$ and $\kappa$ as predicted in Theorem \ref{abstract-error-estimate-idealLOD-ell}. For the construction of the LOD space we use the stabilization parameter $\beta = 1$ and the fixed localization parameter $\ell=10$. The latter is selected sufficiently large so that the error $\varepsilon_h^{\LOD}$ is expected to behave as $\mathcal{O}(\kappa^3\,h^3 + \rho(\kappa)\kappa^4 \, h^4)$. 

The results are depicted in Figure \ref{fig:weighted_error_beta1} for $\kappa=8,12,16,20,32$, where we observe three different error regimes. First, there is a small preasymptotic regime (which seems even absent for $\kappa=8$), where we do not observe any error reduction. In this regime, the error component $\rho(\kappa)\kappa^4 \, h^4$ seems to be of order $1$ and is dominating (cf. Remark \ref{remark:preasymptotic}). We can identify differences in the regime for increasing $\kappa$-values but it is too small to draw any reliable conclusions. Error reduction is first observed from $h=2^{-2}$ to $h=2^{-3}$. After that, i.e., in the second error regime, the error follows the expected rate of $\kappa^3\,h^3$. To stress the uniform convergence in terms of $\kappa h$, Figure \ref{fig:weighted_error_beta1} also contains a plot where the $H^1_{\kappa}$-error is scaled with $\kappa^{-3}$ (right picture). This shows that all error curves are almost on top of each other and follow the line $h^3$. This confirms both the third order convergence in $h$, but also the asymptotically correct scaling of the error with $\kappa^3$. 
Finally, we can also identify a third error regime where the error flattens out again. On the one hand, this is partially caused by the localization parameter $\ell$, where we start to see the influence of the localization error $\theta_{\beta}^{\ell}$. However, we also tested larger values of $\ell$ and we could not observe any considerable improvement. For that reason, it is most likely that the flattening is caused by the fine scale discretization error. More precisely, we probably see the error of the reference solution $\| u - u_{\text{\tiny ref}}^{\FEM} \|_{H^1_{\kappa}(\Omega)}$ and the fine scale discretization error for the corrector problems, which are of order $\mathcal{O}(h_{\fine})$ and hence no longer negligible compared to $\mathcal{O}(h^3)$. In the spirit of Theorem \ref{theorem-ideal-LOD}, we can expect that a fully-discrete error analysis leads, for sufficiently large $\ell$ and $\beta \lesssim 1$, to an estimate of the form
\begin{align*}
\| u_{h,\ell}^{\LOD} - u_{\text{\tiny ref}}^{\FEM}  \|_{H^1_{\kappa}(\Omega)}
\,\,\,\lesssim\,\,\, h_{\fine} \kappa \, (1 + \kappa \, \rho(\kappa) \,h_{\fine})
\,\,+\,\,  \kappa^3 \,h^3 \, (1 + \kappa \, \rho(\kappa) \,h),
\end{align*}
where the error is eventually dominated by $h_{\fine} \kappa$ in the regime $h^3 \lesssim h_{\fine} \ll \kappa^{-1}$.
Note that this is different to linear elliptic problems, where $\| u_{h,\ell}^{\LOD} - u_{\text{\tiny ref}}^{\FEM} \|_{H^1_{\kappa}(\Omega)}$ can be bounded independent of the fine scale $h_{\fine}$. For nonlinear problems, this is typically no longer possible and the fine scale error can indeed become visible. In other words, the LOD approximations for $h=2^{-6}$ are essentially as accurate as the reference solution obtained for $h_{\fine}=2^{-10}$.
To explore if the (small) preasymptotic regime in Figure \ref{fig:weighted_error_beta1} is indeed related to the term $\rho(\kappa)\kappa^4 \, h^4$, we carry out a second experiment, the results of which are shown in Figure \ref{fig:compbest}. There we compare the error $\varepsilon_h^{\LOD}$ with the best-approximation error (of the reference solution) given by
\begin{align}
\label{best-approx-error-Vlodell}
\varepsilon_{h}^{\mathrm{best}} := \inf_{v_{h,\ell}^{\LOD} \in \Vlodell}  \| u_{\text{\tiny ref}}^{\FEM} - v_{h,\ell}^{\LOD}  \|_{H^1_{\kappa}(\Omega)}.
\end{align}
Exemplary, we compare for the values $\kappa=8,12,16,20,32$. In view of Lemma \ref{lem_ritz_proj_hk}, the best-approximation error behaves like $\mathcal{O}(h^3 \kappa^3)$ without any $\mathcal{O}(\rho(\kappa)\kappa^4 \, h^4)$-pollution. Hence, we expect the errors to be slightly different in a short regime and later match each other in the asymptotic behavior. This is precisely what we observe in Figure \ref{fig:compbest}, where the best-approximation error almost instantly follows $h^3$ without preasymptotic effects and quickly aligns with $\varepsilon_h^{\LOD}$. Note that we still expect a preasymptotic regime (for both errors) in the range $h \gg \kappa^{-1}$, which however only became visibly detectable for $\kappa=32$.

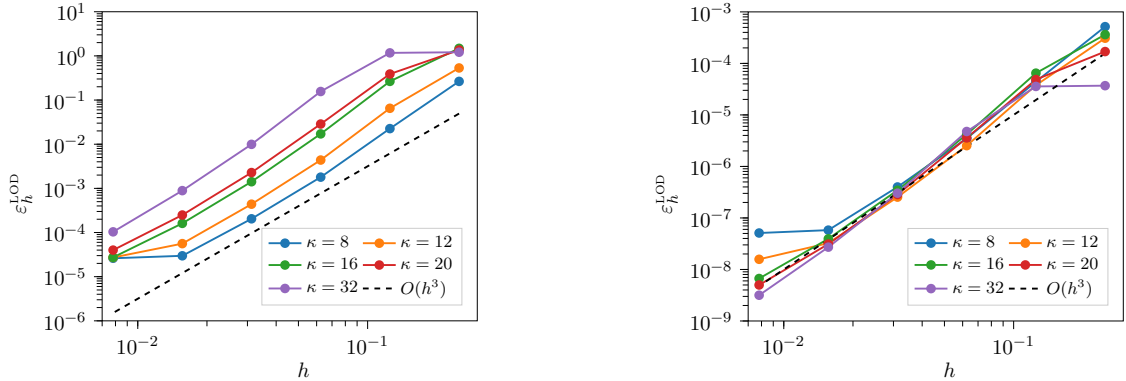
\begin{figure}[h]
	\definecolor{color0}{rgb}{0.12156862745098,0.466666666666667,0.705882352941177}
	\definecolor{color1}{rgb}{1,0.498039215686275,0.0549019607843137}
	\definecolor{color2}{rgb}{0.172549019607843,0.627450980392157,0.172549019607843}
	\definecolor{color3}{rgb}{0.83921568627451,0.152941176470588,0.156862745098039}
	\definecolor{color4}{rgb}{0.580392156862745,0.403921568627451,0.741176470588235}
	\definecolor{color5}{rgb}{0.549019607843137,0.337254901960784,0.294117647058824}
	\definecolor{color6}{rgb}{0.890196078431372,0.466666666666667,0.76078431372549}
	
	\begin{adjustbox}{width=0.4\linewidth}
		\begin{tikzpicture}
			\begin{axis}[
				legend cell align={left},
				legend style={
					fill opacity=0.8,
					draw opacity=1,
					text opacity=1,
					at={(0.96,0.05)},
					anchor=south east,
					draw=white!80!black
				},
				log basis x={2},
				log basis y={10},
				tick align=outside,
				tick pos=left,
				x grid style={white!69.0196078431373!black},
				xmin=1.4e-02, xmax=3e-01,
				xmode=log,
				xtick style={color=black},
				xlabel={$h$},
				xlabel style={xshift=0.25cm, yshift=-0.1cm},
				y grid style={white!69.0196078431373!black},
				ymin=1e-06, ymax=1e+01,
				ymode=log,
				ytick style={color=black},
				ylabel={$\varepsilon_h^{\LOD}$},
				ylabel style={xshift=-0.5cm, yshift=0.25cm},
				legend columns=2,
				]
				
				\addplot [line width = 1, color0, mark=*, mark size=2, mark options={solid}]
				table {%
					2.500000000000000e-01     2.646701063281616e-01
					1.250000000000000e-01     2.255503682609795e-02
					6.250000000000000e-02     1.795684303882613e-03
					3.125000000000000e-02     2.044910493346325e-04
					1.562500000000000e-02     2.970340831674449e-05
				};
				\addlegendentry{\footnotesize $\kappa = 8$}
				\addplot [line width = 1, color1, mark=*, mark size=2, mark options={solid}]
				table {%
					2.500000000000000e-01     3.997989078105458e-01
					1.250000000000000e-01     6.265805504326169e-02
					6.250000000000000e-02     4.460653056098238e-03
					3.125000000000000e-02     4.349410662409484e-04
					1.562500000000000e-02     5.560580600405315e-05
				};
				\addlegendentry{\footnotesize $\kappa = 12$}
				\addplot [line width = 1, color2, mark=*, mark size=2, mark options={solid}]
				table {%
					2.500000000000000e-01     1.364544026599912e+00
					1.250000000000000e-01     4.367392511317231e-01
					6.250000000000000e-02     1.713100970621161e-02
					3.125000000000000e-02     1.864924928745459e-03
					1.562500000000000e-02     9.205110724281587e-04
				};
				\addlegendentry{\footnotesize $\kappa = 16$}
				\addplot [line width = 1, color3, mark=*, mark size=2, mark options={solid}]
				table {%
					2.500000000000000e-01     1.390846386837328e+00
					1.250000000000000e-01     5.227967021619514e-01
					6.250000000000000e-02     2.938646712976997e-02
					3.125000000000000e-02     2.881889024011982e-03
					1.562500000000000e-02     2.642882865748880e-03
				};
				\addlegendentry{\footnotesize $\kappa = 20$}
				\addplot [line width = 1, color4, mark=*, mark size=2, mark options={solid}]
				table {%
					2.500000000000000e-01     1.400111402095018e+00
					1.250000000000000e-01     1.344524082761501e+00
					6.250000000000000e-02     4.284172920643262e-01
					3.125000000000000e-02     9.826353648021787e-03
					1.562500000000000e-02     8.953187995286539e-04
				};
				\addlegendentry{\footnotesize $\kappa = 32$}
				\addplot [line width = 1, black, dashed]
				table {%
					2.500000000000000e-01     4.941058844013093e-02
					1.250000000000000e-01     6.176323555016366e-03
					6.250000000000000e-02     7.720404443770458e-04
					3.125000000000000e-02     9.650505554713072e-05
					1.562500000000000e-02     1.206313194339134e-05
				};
				\addlegendentry{\footnotesize $O(h^3)$}
			\end{axis}
		\end{tikzpicture}
	\end{adjustbox}
	\hspace{2cm}
	\begin{adjustbox}{width=0.4\linewidth}
		\begin{tikzpicture}
			\begin{axis}[
				legend cell align={left},
				legend style={
					fill opacity=0.8,
					draw opacity=1,
					text opacity=1,
					at={(0.96,0.05)},
					anchor=south east,
					draw=white!80!black
				},
				log basis x={2},
				log basis y={10},
				tick align=outside,
				tick pos=left,
				x grid style={white!69.0196078431373!black},
				xmin=1.4e-02, xmax=3e-01,
				xmode=log,
				xtick style={color=black},
				xlabel={$h$},
				xlabel style={xshift=0.25cm, yshift=-0.1cm},
				y grid style={white!69.0196078431373!black},
				ymin=1e-09, ymax=1e-03,
				ymode=log,
				ytick style={color=black},
				ylabel={$\varepsilon_h^{\LOD}$},
				ylabel style={xshift=-0.5cm, yshift=0.25cm},
				legend columns=2,
				]
				
				\addplot [line width = 1, color0, mark=*, mark size=2, mark options={solid}]
				table {%
					2.500000000000000e-01     5.169338014221907e-04
					1.250000000000000e-01     4.405280630097257e-05
					6.250000000000000e-02     3.507195906020728e-06
					3.125000000000000e-02     3.993965807317040e-07
					1.562500000000000e-02     5.801446936864159e-08
				};
				\addlegendentry{\footnotesize $\kappa = 8$}
				\addplot [line width = 1, color1, mark=*, mark size=2, mark options={solid}]
				table {%
					2.500000000000000e-01     2.313651086866584e-04
					1.250000000000000e-01     3.626044852040607e-05
					6.250000000000000e-02     2.581396444501295e-06
					3.125000000000000e-02     2.517020059264748e-07
					1.562500000000000e-02     3.217928588197520e-08
				};
				\addlegendentry{\footnotesize $\kappa = 12$}
				\addplot [line width = 1, color2, mark=*, mark size=2, mark options={solid}]
				table {%
					2.500000000000000e-01     3.331406314941193e-04
					1.250000000000000e-01     1.066257937333308e-04
					6.250000000000000e-02     4.182375416555569e-06
					3.125000000000000e-02     4.553039376819968e-07
					1.562500000000000e-02     2.247341485420309e-07
				};
				\addlegendentry{\footnotesize $\kappa = 16$}
				\addplot [line width = 1, color3, mark=*, mark size=2, mark options={solid}]
				table {%
					2.500000000000000e-01     1.738557983546660e-04
					1.250000000000000e-01     6.534958777024392e-05
					6.250000000000000e-02     3.673308391221246e-06
					3.125000000000000e-02     3.602361280014978e-07
					1.562500000000000e-02     3.303603582186100e-07
				};
				\addlegendentry{\footnotesize $\kappa = 20$}
				\addplot [line width = 1, color4, mark=*, mark size=2, mark options={solid}]
				table {%
					2.500000000000000e-01     4.272800909713800e-05
					1.250000000000000e-01     4.103161873661807e-05
					6.250000000000000e-02     1.307425818067402e-05
					3.125000000000000e-02     2.998765151373836e-07
					1.562500000000000e-02     2.732296141139691e-08
				};
				\addlegendentry{\footnotesize $\kappa = 32$}
				\addplot [line width = 1, black, dashed]
				table {%
					2.500000000000000e-01     1.562500000000000e-04
					1.250000000000000e-01     1.953125000000000e-05
					6.250000000000000e-02     2.441406250000000e-06
					3.125000000000000e-02     3.051757812500000e-07
					1.562500000000000e-02     3.814697265625000e-08
				};
				\addlegendentry{\footnotesize $O(h^3)$}
			\end{axis}
		\end{tikzpicture}
	\end{adjustbox}
	\caption{Convergence of the $H^1_\kappa$-errors $\varepsilon_h^{\LOD}$ (left) and $\kappa^{-3} \varepsilon_h^{\LOD}$ (right) with $\beta=1$.}
	\label{fig:weighted_error_beta1}
\end{figure}

\begin{figure}[h]
	\definecolor{color0}{rgb}{0.12156862745098,0.466666666666667,0.705882352941177}
	\definecolor{color1}{rgb}{1,0.498039215686275,0.0549019607843137}
	\definecolor{color2}{rgb}{0.172549019607843,0.627450980392157,0.172549019607843}
	\definecolor{color3}{rgb}{0.83921568627451,0.152941176470588,0.156862745098039}
	\definecolor{color4}{rgb}{0.580392156862745,0.403921568627451,0.741176470588235}
	\definecolor{color5}{rgb}{0.549019607843137,0.337254901960784,0.294117647058824}
	\definecolor{color6}{rgb}{0.890196078431372,0.466666666666667,0.76078431372549}
	
	\begin{adjustbox}{width=0.4\linewidth}
		\begin{tikzpicture}
			\begin{axis}[
				legend cell align={left},
				legend style={
					fill opacity=0.8,
					draw opacity=1,
					text opacity=1,
					at={(0.96,0.05)},
					anchor=south east,
					draw=white!80!black
				},
				log basis x={2},
				log basis y={10},
				tick align=outside,
				tick pos=left,
				x grid style={white!69.0196078431373!black},
				xmin=1.4e-02, xmax=3e-01,
				xmode=log,
				xtick style={color=black},
				xlabel={$h$},
				xlabel style={xshift=0.25cm, yshift=-0.1cm},
				y grid style={white!69.0196078431373!black},
				ymin=1e-06, ymax=3,
				ymode=log,
				ytick style={color=black},
				ylabel={$\varepsilon_h^{\LOD}$ , $\varepsilon_h^{\mathrm{best}}$},
				ylabel style={xshift=-0.5cm, yshift=0.25cm},
				legend columns=2,
				legend pos = south east,
				]
				
				\addplot [line width = 1, color0, mark=*, mark size=2, mark options={solid}]
				table {%
					2.500000000000000e-01     2.646701063281616e-01
					1.250000000000000e-01     2.255503682609795e-02
					6.250000000000000e-02     1.795684303882613e-03
					3.125000000000000e-02     2.044910493346325e-04
					1.562500000000000e-02     2.970340831674449e-05
				};
				\addlegendentry{\footnotesize $\varepsilon_h^{\LOD}$, $\kappa = 8$}
				\addplot [line width = 1, color0, densely dotted, mark=square*, mark size=2, mark options={solid}]
				table {%
					2.500000000000000e-01     2.103258878632883e-01
					1.250000000000000e-01     2.173571814680273e-02
					6.250000000000000e-02     1.795402190898332e-03
					3.125000000000000e-02     2.044908194711811e-04
					1.562500000000000e-02     2.970339024444866e-05
				};
				\addlegendentry{\footnotesize $\varepsilon_h^{\mathrm{best}}$, $\kappa= 8$}
				\addplot [line width = 1, color1, mark=*, mark size=2, mark options={solid}]
				table {%
					2.500000000000000e-01     3.997989078105458e-01
					1.250000000000000e-01     6.265805504326169e-02
					6.250000000000000e-02     4.460653056098238e-03
					3.125000000000000e-02     4.349410662409484e-04
					1.562500000000000e-02     5.560580600405315e-05
				};
				\addlegendentry{\footnotesize $\varepsilon_h^{\LOD}$, $\kappa = 12$}
				\addplot [line width = 1, color1,densely dotted, mark=square*, mark size=2, mark options={solid}]
				table {%
					2.500000000000000e-01     3.192153255744047e-01
					1.250000000000000e-01     5.677171067980221e-02
					6.250000000000000e-02     4.430181054944957e-03
					3.125000000000000e-02     4.348965460099623e-04
					1.562500000000000e-02     5.537052778168657e-05
				};
				\addlegendentry{\footnotesize $\varepsilon_h^{\mathrm{best}}$, $\kappa = 12$}
				\addplot [line width = 1, color2, mark=*, mark size=2, mark options={solid}]
				table {%
					2.500000000000000e-01     1.364544026599912e+00
					1.250000000000000e-01     4.367392511317231e-01
					6.250000000000000e-02     1.713100970621161e-02
					3.125000000000000e-02     1.864924928745459e-03
					1.562500000000000e-02     9.205110724281587e-04
				};
				\addlegendentry{\footnotesize $\varepsilon_h^{\LOD}$, $\kappa = 16$}
				\addplot [line width = 1, color2,densely dotted, mark=square*, mark size=2, mark options={solid}]
				table {%
					2.500000000000000e-01     8.207401641858331e-01
					1.250000000000000e-01     1.796484607248328e-01
					6.250000000000000e-02     1.626367593899552e-02
					3.125000000000000e-02     1.399714073644773e-03
					1.562500000000000e-02     1.597288093350175e-04
				};
				\addlegendentry{\footnotesize $\varepsilon_h^{\mathrm{best}}$, $\kappa = 16$}
			\end{axis}
		\end{tikzpicture}
	\end{adjustbox}
\hspace{2cm}
	\begin{adjustbox}{width=0.4\linewidth}
		\begin{tikzpicture}
			\begin{axis}[
				legend cell align={left},
				legend style={
					fill opacity=0.8,
					draw opacity=1,
					text opacity=1,
					at={(0.96,0.05)},
					anchor=south east,
					draw=white!80!black
				},
				log basis x={2},
				log basis y={10},
				tick align=outside,
				tick pos=left,
				x grid style={white!69.0196078431373!black},
				xmin=1.4e-02, xmax=3e-01,
				xmode=log,
				xtick style={color=black},
				xlabel={$h$},
				xlabel style={xshift=0.25cm, yshift=-0.1cm},
				y grid style={white!69.0196078431373!black},
				ymin=1e-06, ymax=3,
				ymode=log,
				ytick style={color=black},
				ylabel={$\varepsilon_h^{\LOD}$ , $\varepsilon_h^{\mathrm{best}}$},
				ylabel style={xshift=-0.5cm, yshift=0.25cm},
				legend columns=2,
				]
				
				\addplot [line width = 1, color3, mark=*, mark size=2, mark options={solid}]
				table {%
					2.500000000000000e-01     1.390846386837328e+00
					1.250000000000000e-01     5.227967021619514e-01
					6.250000000000000e-02     2.938646712976997e-02
					3.125000000000000e-02     2.881889024011982e-03
					1.562500000000000e-02     2.642882865748880e-03
				};
				\addlegendentry{\footnotesize $\varepsilon_h^{\LOD}$, $\kappa = 20$}
				\addplot [line width = 1, color3,densely dotted, mark=square*, mark size=2, mark options={solid}]
				table {%
					2.500000000000000e-01     9.088239535476509e-01
					1.250000000000000e-01     2.630429182839991e-01
					6.250000000000000e-02     2.721439191118549e-02
					3.125000000000000e-02     2.297565934609675e-03
					1.562500000000000e-02     2.516536713673625e-04
				};
				\addlegendentry{\footnotesize $\varepsilon_h^{\mathrm{best}}$, $\kappa = 20$}
				\addplot [line width = 1, color4, mark=*, mark size=2, mark options={solid}]
				table {%
					2.500000000000000e-01     1.400111402095018e+00
					1.250000000000000e-01     1.344524082761501e+00
					6.250000000000000e-02     4.284172920643262e-01
					3.125000000000000e-02     9.826353648021787e-03
					1.562500000000000e-02     8.953187995286539e-04
				};
				\addlegendentry{\footnotesize $\varepsilon_h^{\LOD}$, $\kappa = 32$}
				\addplot [line width = 1, color4,densely dotted, mark=square*, mark size=2, mark options={solid}]
				table {%
					2.500000000000000e-01     1.125455690856485e+00
					1.250000000000000e-01     6.733552648167486e-01
					6.250000000000000e-02     9.906265839578014e-02
					3.125000000000000e-02     9.473538783011350e-03
					1.562500000000000e-02     8.841103179519604e-04
				};
				\addlegendentry{\footnotesize $\varepsilon_h^{\mathrm{best}}$, $\kappa = 32$, }
			\end{axis}
		\end{tikzpicture}
	\end{adjustbox}
	\caption{Comparison of $\varepsilon_h^{\LOD}$ (solid lines) and the $H^1_\kappa$-best-approximation error $\varepsilon_h^{\mathrm{best}}$ from \eqref{best-approx-error-Vlodell} (dashed lines) with $\beta=1$. }
	\label{fig:compbest}
\end{figure}
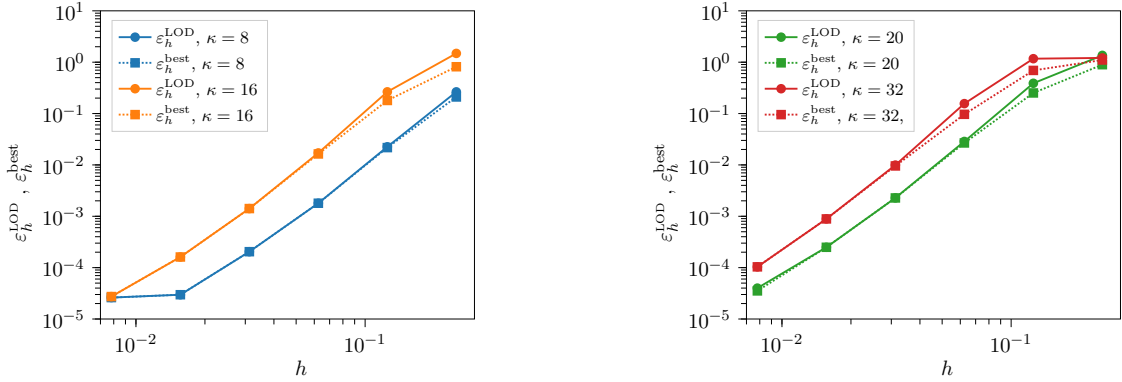

\subsection{Influence of the stabilization $\beta$}
In the previous subsection, we used $\beta=1$ for the construction of the LOD-space, where we recall that $\beta$ influences the element correctors $\mathcal{C}_{T,\ell} v_h$, defined in \eqref{definition-CT-ell}. According to the error estimates in Section \ref{subsection:final-estimates-full-LOD}, small values for $\beta$ should be preferable, since the error estimates become best for $\beta=0$. In the following, we compare the choices $\beta=0$ and $\beta=1$. As in the previous section, we keep $\ell=10$. \\
In the left picture of Figure \ref{fig:weighted_error_beta0}, the $H^1_\kappa$-error $\varepsilon_h^{\LOD}$ is shown for $\beta=0$ and different $\kappa$ values. Essentially, we make the same observations regarding the error behavior as in Section \ref{subsection:numerics:hvskappa} for $\beta=1$ (cf. Figure \ref{fig:weighted_error_beta1}). This is consistent with the analytical results in Theorem \ref{abstract-error-estimate-idealLOD-ell}.

A direct comparison of the $H^1_\kappa$-errors for both stabilization parameters is exemplary shown in the right picture of Figure \ref{fig:weighted_error_beta0} for $\kappa=8,\, 12, \,16$. The graph shows, that the errors for the same $\kappa$ and the different stabilization parameters just have small deviations for every mesh size $h$. However, the figure also supports our predictions, that small value of $\beta$ are preferable and that we get the visibly better errors for $\beta = 0$. In particular, the choice $\beta = 0$ is advantageous to capture the vortex lattice of the minimizers as we further emphasize in Section \ref{subsection:comparison-FEM-LOD} below.
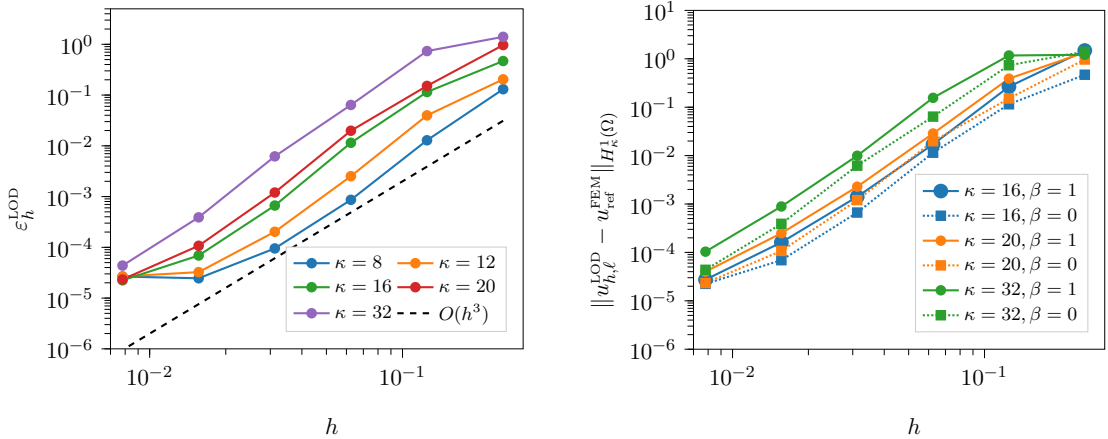
\begin{figure}[h]
	\definecolor{color0}{rgb}{0.12156862745098,0.466666666666667,0.705882352941177}
	\definecolor{color1}{rgb}{1,0.498039215686275,0.0549019607843137}
	\definecolor{color2}{rgb}{0.172549019607843,0.627450980392157,0.172549019607843}
	\definecolor{color3}{rgb}{0.83921568627451,0.152941176470588,0.156862745098039}
	\definecolor{color4}{rgb}{0.580392156862745,0.403921568627451,0.741176470588235}
	\definecolor{color5}{rgb}{0.549019607843137,0.337254901960784,0.294117647058824}
	\definecolor{color6}{rgb}{0.890196078431372,0.466666666666667,0.76078431372549}
	
	\centering
	\begin{adjustbox}{width=0.44\linewidth}
		\begin{tikzpicture}
			\begin{axis}[
				legend cell align={left},
				legend style={
					fill opacity=0.8,
					draw opacity=1,
					text opacity=1,
					at={(0.96,0.05)},
					anchor=south east,
					draw=white!80!black
				},
				log basis x={2},
				log basis y={10},
				tick align=outside,
				tick pos=left,
				x grid style={white!69.0196078431373!black},
				xmin=1.4e-02, xmax=3e-01,
				xmode=log,
				xtick style={color=black},
				xlabel={$h$},
				xlabel style={xshift=0.25cm, yshift=-0.5cm},
				y grid style={white!69.0196078431373!black},
				ymin=1e-06, ymax=0.5e+01,
				ymode=log,
				ytick style={color=black},
				ylabel={$\varepsilon_h^{\LOD}$},
				ylabel style={xshift=-0.5cm, yshift=0.25cm},
				legend columns=2,
				]
				
				\addplot [line width = 1, color0, mark=*, mark size=2, mark options={solid}]
				table {%
					2.500000000000000e-01    1.304706007964277e-01
					1.250000000000000e-01     1.285678514182162e-02
					6.250000000000000e-02     8.629814361590047e-04
					3.125000000000000e-02    9.509590395142185e-05
					1.562500000000000e-02     2.455832305304613e-05
				};
				\addlegendentry{\footnotesize $\kappa = 8$}
				\addplot [line width = 1, color1, mark=*, mark size=2, mark options={solid}]
				table {%
					2.500000000000000e-01     1.756487646700691e-01
					1.250000000000000e-01     3.883106715390972e-02
					6.250000000000000e-02     2.731329506493729e-03
					3.125000000000000e-02     2.080956954132298e-04
					1.562500000000000e-02     3.298224497909854e-05
				};
				\addlegendentry{\footnotesize $\kappa = 12$}
				\addplot [line width = 1, color2, mark=*, mark size=2, mark options={solid}]
				table {%
					2.500000000000000e-01     5.906680133984222e-01
					1.250000000000000e-01     3.878070710202812e-01
					6.250000000000000e-02     1.156764661862669e-02
					3.125000000000000e-02     1.400934613354597e-03
					1.562500000000000e-02     9.091779250625508e-04
				};
				\addlegendentry{\footnotesize $\kappa = 16$}
				\addplot [line width = 1, color3, mark=*, mark size=2, mark options={solid}]
				table {%
					2.500000000000000e-01     1.016021025248976e+00
					1.250000000000000e-01     3.388261637287355e-01
					6.250000000000000e-02     2.022473326609193e-02
					3.125000000000000e-02     2.123920041558945e-03
					1.562500000000000e-02     2.633302146863045e-03
				};
				\addlegendentry{\footnotesize $\kappa = 20$}
				\addplot [line width = 1, color4, mark=*, mark size=2, mark options={solid}]
				table {%
					2.500000000000000e-01     1.392752149813496e+00
					1.250000000000000e-01     5.519839882804088e-01
					6.250000000000000e-02     4.463368544846230e-01
					3.125000000000000e-02     6.161786715740432e-03
					1.562500000000000e-02     4.144399282156794e-04
				};
				\addlegendentry{\footnotesize $\kappa = 32$}
				\addplot [line width = 1, black, dashed]
				table {%
					2.500000000000000e-01     3.117597367138874e-02
					1.250000000000000e-01     3.896996708923593e-03
					6.250000000000000e-02     4.871245886154491e-04
					3.125000000000000e-02     6.089057357693114e-05
					1.562500000000000e-02     7.611321697116392e-06
				};
				\addlegendentry{\footnotesize $O(h^3)$}
			\end{axis}
		\end{tikzpicture}
	\end{adjustbox}
	\hspace{15pt}
	\centering
	\begin{adjustbox}{width=0.44\linewidth}
		\begin{tikzpicture}
			\begin{axis}[
				legend cell align={left},
				legend style={
					fill opacity=0.8,
					draw opacity=1,
					text opacity=1,
					at={(0.96,0.05)},
					anchor=south east,
					draw=white!80!black
				},
				log basis x={2},
				log basis y={10},
				tick align=outside,
				tick pos=left,
				x grid style={white!69.0196078431373!black},
				xmin=1.4e-02, xmax=3e-01,
				xmode=log,
				xtick style={color=black},
				xlabel={$h$},
				xlabel style={xshift=0.25cm, yshift=-0.5cm},
				y grid style={white!69.0196078431373!black},
				ymin=1.5e-05, ymax=4e+01,
				ymode=log,
				ytick style={color=black},
				ylabel={$\| u_{h,\ell}^{\LOD} - u_{\text{\tiny ref}}^{\FEM} \|_{H^1_{\kappa}(\Omega)}$},
				ylabel style={xshift=-0.5cm, yshift=0.25cm},
				legend columns=2,
				legend pos = north west,
				]
				\addplot [line width = 1, color0, mark=*, mark size=3, mark options={solid}]
				table {%
					2.500000000000000e-01     2.646701063281616e-01
					1.250000000000000e-01     2.255503682609795e-02
					6.250000000000000e-02     1.795684303882613e-03
					3.125000000000000e-02     2.044910493346325e-04
					1.562500000000000e-02     2.970340831674449e-05
				};
				\addlegendentry{\footnotesize $\kappa = 8,\beta=1$}
				\addplot [line width = 1, color0,densely dotted, mark=square*, mark size=2, mark options={solid}]
				table {%
					2.500000000000000e-01    1.304706007964277e-01
					1.250000000000000e-01     1.285678514182162e-02
					6.250000000000000e-02     8.629814361590047e-04
					3.125000000000000e-02    9.509590395142185e-05
					1.562500000000000e-02     2.455832305304613e-05
				};
				\addlegendentry{\footnotesize $\kappa = 8,\beta=0$}
				\addplot [line width = 1, color1, mark=*, mark size=3, mark options={solid}]
				table {%
					2.500000000000000e-01     3.997989078105458e-01
					1.250000000000000e-01     6.265805504326169e-02
					6.250000000000000e-02     4.460653056098238e-03
					3.125000000000000e-02     4.349410662409484e-04
					1.562500000000000e-02     5.560580600405315e-05
				};
				\addlegendentry{\footnotesize $\kappa = 12,\beta=1$}
				\addplot [line width = 1, color1,densely dotted, mark=square*, mark size=2, mark options={solid}]
				table {%
					2.500000000000000e-01     1.756487646700691e-01
					1.250000000000000e-01     3.883106715390972e-02
					6.250000000000000e-02     2.731329506493729e-03
					3.125000000000000e-02     2.080956954132298e-04
					1.562500000000000e-02     3.298224497909854e-05
				};
				\addlegendentry{\footnotesize $\kappa = 12,\beta=0$}
				\addplot [line width = 1, color2, mark=*, mark size=3, mark options={solid}]
				table {%
					2.500000000000000e-01     1.364544026599912e+00
					1.250000000000000e-01     4.367392511317231e-01
					6.250000000000000e-02     1.713100970621161e-02
					3.125000000000000e-02     1.864924928745459e-03
					1.562500000000000e-02     9.205110724281587e-04
				};
				\addlegendentry{\footnotesize $\kappa = 16,\beta=1$}
				\addplot [line width = 1, color2,densely dotted, mark=square*, mark size=2, mark options={solid}]
				table {%
					2.500000000000000e-01     5.906680133984222e-01
					1.250000000000000e-01     3.878070710202812e-01
					6.250000000000000e-02     1.156764661862669e-02
					3.125000000000000e-02     1.400934613354597e-03
					1.562500000000000e-02     9.091779250625508e-04
				};
				\addlegendentry{\footnotesize $\kappa = 16,\beta=0$}
			\end{axis}
		\end{tikzpicture}
	\end{adjustbox}
	\caption{Left: Convergence of the $H^1_\kappa$-error $\varepsilon_h^{\LOD}$ with $\beta=0$. Right: Comparison of the $H^1_\kappa$-error $\varepsilon_h^{\LOD}$ for the stabilization parameters $\beta=1$ vs. $\beta=0$ with $\kappa = 8,12,16$.}
	\label{fig:weighted_error_beta0}
\end{figure}

\subsection{Influence of the localization parameter $\ell$}

Next we investigate the influence of the localization parameter $\ell$ and compute $u_{h,\ell}^{\LOD}$ for different values of $\ell$ and fixed $\kappa=16$, $h_{\text{\tiny fine}}=2^{-10}$, $\beta=0$. For the coarse mesh size we choose $h=2^{-4}$ and $h=2^{-5}$. As a reference solution we use $u_{h,15}^{\LOD}$  with $\ell=15$ and compute the error $\varepsilon_\ell^{\LOD} := \| u_{h,\ell}^{\LOD} - u_{h,15}^{\LOD} \|_{H^1_{\kappa}(\Omega)}$. Figure \ref{fig:decay_error_and_FEMvsLOD} (left) shows the expected exponential decay for the $H^1_\kappa$-error, where we observe in average the decay rate $\exp(-1.0066 \ell)$ for $h=2^{-4}$ and  $\exp(-0.8741 \ell)$ for $h=2^{-5}$. This gives the decay parameters $\theta_\beta \approx 0.3655$ for $h=2^{-4}$ and $\theta_\beta \approx 0.4172$ for $h = 2^{-5}$ and coincides with the theory.

\begin{figure}[h]
	\definecolor{color0}{rgb}{0.12156862745098,0.466666666666667,0.705882352941177}
	\definecolor{color1}{rgb}{1,0.498039215686275,0.0549019607843137}
	\definecolor{color2}{rgb}{0.172549019607843,0.627450980392157,0.172549019607843}
	\definecolor{color3}{rgb}{0.83921568627451,0.152941176470588,0.156862745098039}
	\definecolor{color4}{rgb}{0.580392156862745,0.403921568627451,0.741176470588235}
	\definecolor{color5}{rgb}{0.549019607843137,0.337254901960784,0.294117647058824}
	\definecolor{color6}{rgb}{0.890196078431372,0.466666666666667,0.76078431372549}
	
	\centering
	\begin{adjustbox}{width=0.44\linewidth}
		\begin{tikzpicture}
			\begin{axis}[
					legend cell align={left},
					legend style={
						fill opacity=0.8,
						draw opacity=1,
						text opacity=1,
						at={(0.96,0.05)},
						anchor=south east,
						draw=white!80!black
					},
					log basis x={2},
					log basis y={10},
					tick align=outside,
					tick pos=left,
					x grid style={white!69.0196078431373!black},
					xmin=1, xmax=10,
					%xmode=log,
					xtick style={color=black},
					xlabel={$\ell$},
					xlabel style={xshift=0.25cm, yshift=-0.5cm},
					y grid style={white!69.0196078431373!black},
					ymin=1e-05, ymax=1e+00,
					ymode=log,
					ytick style={color=black},
					ylabel={$\varepsilon_\ell^{\LOD}$},
					ylabel style={xshift=-0.5cm, yshift=0.25cm},
					legend columns=2,
					legend pos = north east,
					]
					\addplot [line width = 1, color0, mark=*, mark size=2, mark options={solid}]
					table {%
							1.000000000000000e+00     1.168107943932533e-01
							2.000000000000000e+00     4.468336060042428e-02
							3.000000000000000e+00     1.750244336516688e-02
							4.000000000000000e+00     6.648979976824701e-03
							5.000000000000000e+00     2.662044985356583e-03
							6.000000000000000e+00     1.054498966086386e-03
							7.000000000000000e+00     4.032293746584898e-04
							8.000000000000000e+00     1.438617013489406e-04
							9.000000000000000e+00     5.134893951764564e-05
							1.000000000000000e+01     1.746051048409995e-05
					};
					\addlegendentry{$h=2^{-4}$}
					\addplot [line width = 1, color0,densely dotted, mark=square*, mark size=2, mark options={solid}]
					table {%
						1.000000000000000e+00     1.593412810125476e-01
						2.000000000000000e+00     5.823156960044490e-02
						3.000000000000000e+00     2.128083618120551e-02
						4.000000000000000e+00     7.777121442521544e-03
						5.000000000000000e+00     2.842163598117699e-03
						6.000000000000000e+00     1.038674010450618e-03
						7.000000000000000e+00     3.795853626089869e-04
						8.000000000000000e+00     1.387201817483489e-04
						9.000000000000000e+00     5.069555024996468e-05
						1.000000000000000e+01     1.852678379422091e-05
					};
					\addlegendentry{$\exp(-1.0066 \ell)$}
					\addplot [line width = 1, color1, mark=*, mark size=2, mark options={solid}]
					table {%
						1.000000000000000e+00     4.791351816103773e-02
						2.000000000000000e+00     2.416645391654149e-02
						3.000000000000000e+00     1.087083306277031e-02
						4.000000000000000e+00     4.727778410315846e-03
						5.000000000000000e+00     2.025683811543633e-03
						6.000000000000000e+00     8.509978861655315e-04
						7.000000000000000e+00     3.540077759180766e-04
						8.000000000000000e+00     1.471340694275191e-04
						9.000000000000000e+00     6.155266082150160e-05
						1.000000000000000e+01     2.570110827744109e-05
					};
					\addlegendentry{$h=2^{-5}$}
					\addplot [line width = 1, color1,densely dotted, mark=square*, mark size=2, mark options={solid}]
					table {%
						1.000000000000000e+00     6.702238050334852e-02
						2.000000000000000e+00     2.796450759869026e-02
						3.000000000000000e+00     1.166794851755728e-02
						4.000000000000000e+00     4.868350430555889e-03
						5.000000000000000e+00     2.031277038892484e-03
						6.000000000000000e+00     8.475327459655955e-04
						7.000000000000000e+00     3.536256954273594e-04
						8.000000000000000e+00     1.475472576862061e-04
						9.000000000000000e+00     6.156281495441175e-05
						1.000000000000000e+01     2.568655117380377e-05
					};
					\addlegendentry{$\exp(-0.8741 \ell)$}
			\end{axis}
		\end{tikzpicture}
	\end{adjustbox}
	\hspace{15pt}
	\centering
	\begin{adjustbox}{width=0.44\linewidth}
		\begin{tikzpicture}
			\begin{axis}[
				legend cell align={left},
				legend style={
					fill opacity=0.8,
					draw opacity=1,
					text opacity=1,
					at={(0.96,0.05)},
					anchor=south east,
					draw=white!80!black
				},
				legend pos= south east,
				log basis x={2},
				log basis y={10},
				tick align=outside,
				tick pos=left,
				x grid style={white!69.0196078431373!black},
				xmin=1.4e-02, xmax=3e-01,
				xmode=log,
				xtick style={color=black},
				xlabel={$h$},
				xlabel style={xshift=0.25cm, yshift=-0.5cm},
				y grid style={white!69.0196078431373!black},
				ymin=1e-07, ymax=5e+00,
				ymode=log,
				ytick style={color=black},
				ylabel={$\varepsilon_h^{\FEM}$, $\varepsilon_h^{\LOD}$},
				ylabel style={xshift=-0.5cm, yshift=0.25cm},
				legend columns=2,
				]

				\addplot [line width = 1, color0, mark=*, mark size=2, mark options={solid}]
				table {%
					2.500000000000000e-01    1.304706007964277e-01
					1.250000000000000e-01     1.285678514182162e-02
					6.250000000000000e-02     8.629814361590047e-04
					3.125000000000000e-02    9.509590395142185e-05
					1.562500000000000e-02     2.455832305304613e-05
				};
				\addlegendentry{\footnotesize $\varepsilon_h^{\LOD}$}
				\addplot [line width = 1, color0,densely dotted, mark=square*, mark size=2, mark options={solid}]
				table {%
					2.500000000000000e-01     1.407379981553553e+00
					1.250000000000000e-01     5.451725423173400e-01
					6.250000000000000e-02     2.321366278966517e-01
					3.125000000000000e-02     1.088933090671152e-01
					1.562500000000000e-02     5.316390962191783e-02
				};
				\addlegendentry{\footnotesize $\varepsilon_h^{\FEM}$, $\kappa = 8$}
				\addplot [line width = 1, color1, mark=*, mark size=2, mark options={solid}]
				table {%
					2.500000000000000e-01     1.756487646700691e-01
					1.250000000000000e-01     3.883106715390972e-02
					6.250000000000000e-02     2.731329506493729e-03
					3.125000000000000e-02     2.080956954132298e-04
					1.562500000000000e-02     3.298224497909854e-05
				};
				\addlegendentry{\footnotesize $\varepsilon_h^{\LOD}$}
				\addplot [line width = 1, color1,densely dotted, mark=square*, mark size=2, mark options={solid}]
				table {%
					2.500000000000000e-01     1.295718631825076e+00
					1.250000000000000e-01     4.859453981808573e-01
					6.250000000000000e-02     2.302823486534858e-01
					3.125000000000000e-02     1.063776325139076e-01
					1.562500000000000e-02     5.098021559437676e-02
				};
				\addlegendentry{\footnotesize $\varepsilon_h^{\FEM}$, $\kappa = 12$}
				\addplot [line width = 1, color2, mark=*, mark size=2, mark options={solid}]
				table {%
					2.500000000000000e-01     5.906680133984222e-01
					1.250000000000000e-01     3.878070710202812e-01
					6.250000000000000e-02     1.156764661862669e-02
					3.125000000000000e-02     1.400934613354597e-03
					1.562500000000000e-02     9.091779250625508e-04
				};
				\addlegendentry{\footnotesize $\varepsilon_h^{\LOD}$}
				\addplot [line width = 1, color2,densely dotted, mark=square*, mark size=2, mark options={solid}]
				table {%
					2.500000000000000e-01     1.378231919983623e+00
					1.250000000000000e-01     1.417908991443185e+00
					6.250000000000000e-02     1.628438071487168e+00
					3.125000000000000e-02     2.615105681100162e-01
					1.562500000000000e-02     1.114771725752012e-01
				};
				\addlegendentry{\footnotesize $\varepsilon_h^{\FEM}$, $\kappa = 16$}
				\addplot [line width = 1, color3, mark=*, mark size=2, mark options={solid}]
				table {%
					2.500000000000000e-01     1.016021025248976e+00
					1.250000000000000e-01     3.388261637287355e-01
					6.250000000000000e-02     2.022473326609193e-02
					3.125000000000000e-02     2.123920041558945e-03
					1.562500000000000e-02     2.633302146863045e-03
				};
				\addlegendentry{\footnotesize $\varepsilon_h^{\LOD}$}
				\addplot [line width = 1, color3,densely dotted, mark=square*, mark size=2, mark options={solid}]
				table {%
					2.500000000000000e-01     1.244409623829920e+00
					1.250000000000000e-01     1.306343086969237e+00
					6.250000000000000e-02     6.343066876185528e-01
					3.125000000000000e-02     5.638098935089952e-01
					1.562500000000000e-02     1.188300871845986e-01
				};
				\addlegendentry{\footnotesize $\varepsilon_h^{\FEM}$, $\kappa = 20$}
				\addplot [line width = 1, color4, mark=*, mark size=2, mark options={solid}]
				table {%
					2.500000000000000e-01     1.392752149813496e+00
					1.250000000000000e-01     5.519839882804088e-01
					6.250000000000000e-02     4.463368544846230e-01
					3.125000000000000e-02     6.161786715740432e-03
					1.562500000000000e-02     4.144399282156794e-04
				};
				\addlegendentry{\footnotesize $\varepsilon_h^{\LOD}$}
				\addplot [line width = 1, color4,densely dotted, mark=square*, mark size=2, mark options={solid}]
				table {%
					2.500000000000000e-01     1.275826973644790e+00
					1.250000000000000e-01     1.382864564310055e+00
					6.250000000000000e-02     8.920895223668666e-01
					3.125000000000000e-02     5.223001416551410e-01
					1.562500000000000e-02     1.807131843066118e-01
				};
				\addlegendentry{\footnotesize $\varepsilon_h^{\FEM}$, $\kappa = 32$}
				\addplot [line width = 1, black, dashed]
				table {%
					2.500000000000000e-01     4.988155787422199e-01
					1.250000000000000e-01     2.494077893711099e-01
					6.250000000000000e-02     1.247038946855550e-01
					3.125000000000000e-02     6.235194734277748e-02
					1.562500000000000e-02     3.117597367138874e-02
				};
				%\addlegendentry{\footnotesize $O(h)$ / $O(h^3)$}
				\addplot [line width = 1, black, dashed]
				table {%
					2.500000000000000e-01     3.117597367138874e-02
					1.250000000000000e-01     3.896996708923593e-03
					6.250000000000000e-02     4.871245886154491e-04
					3.125000000000000e-02     6.089057357693114e-05
					1.562500000000000e-02     7.611321697116392e-06
				};
				%\addlegendentry{\footnotesize $O(h^3)$}
			\end{axis}
		\end{tikzpicture}
	\end{adjustbox}
	\caption{Left: $H^1_\kappa$-error $\varepsilon_\ell^{\LOD}$ for $h=2^{-4}$ and $2^{-5}$. Right: Comparison of the $H^1_\kappa$-errors $\varepsilon_h^{\FEM}$ and $\varepsilon_h^{\LOD}$ for the cases $\kappa=8,12,16,20,32$ with $\ell=10$ and $\beta=0$}
	\label{fig:decay_error_and_FEMvsLOD}
\end{figure}
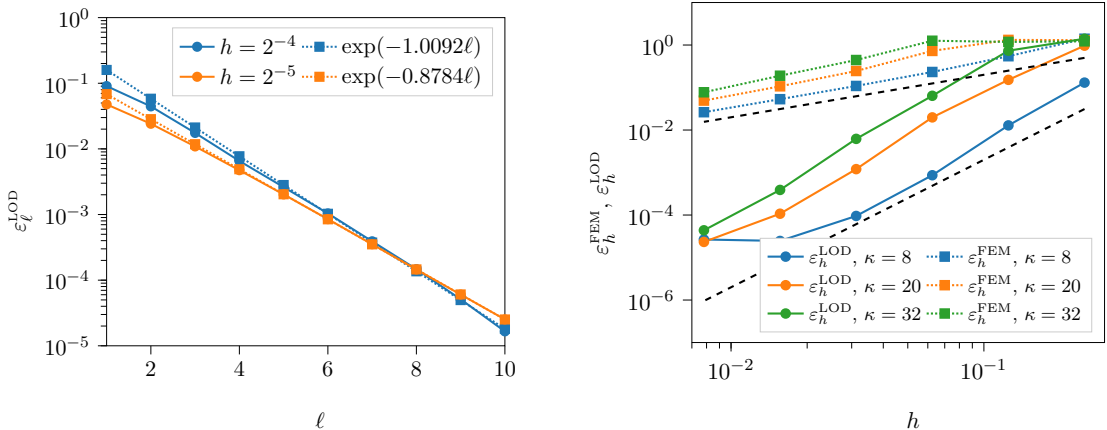

\subsection{Comparison of the LOD-method with the finite element method}
\label{subsection:comparison-FEM-LOD}

During our experiments it has become clear, that the LOD-method creates sharper images with less degrees of freedom compared to the standard finite element method. To demonstrate this, we show the vortex patterns for $\kappa=16$ computed with both methods. As parameters we use $h_{\text{\tiny fine}}=2^{-7}$, $h= 2^{-\{2,3,4,5\}}$, $\ell=4$ and $\beta=0$ or $\beta = 1$ for the LOD-method and $h=2^{-\{2,3,4,5\}}$ for the standard finite element method.

\begin{figure}[h!]
\centering
\begin{minipage}{0.24\textwidth}
\includegraphics[scale=0.21]{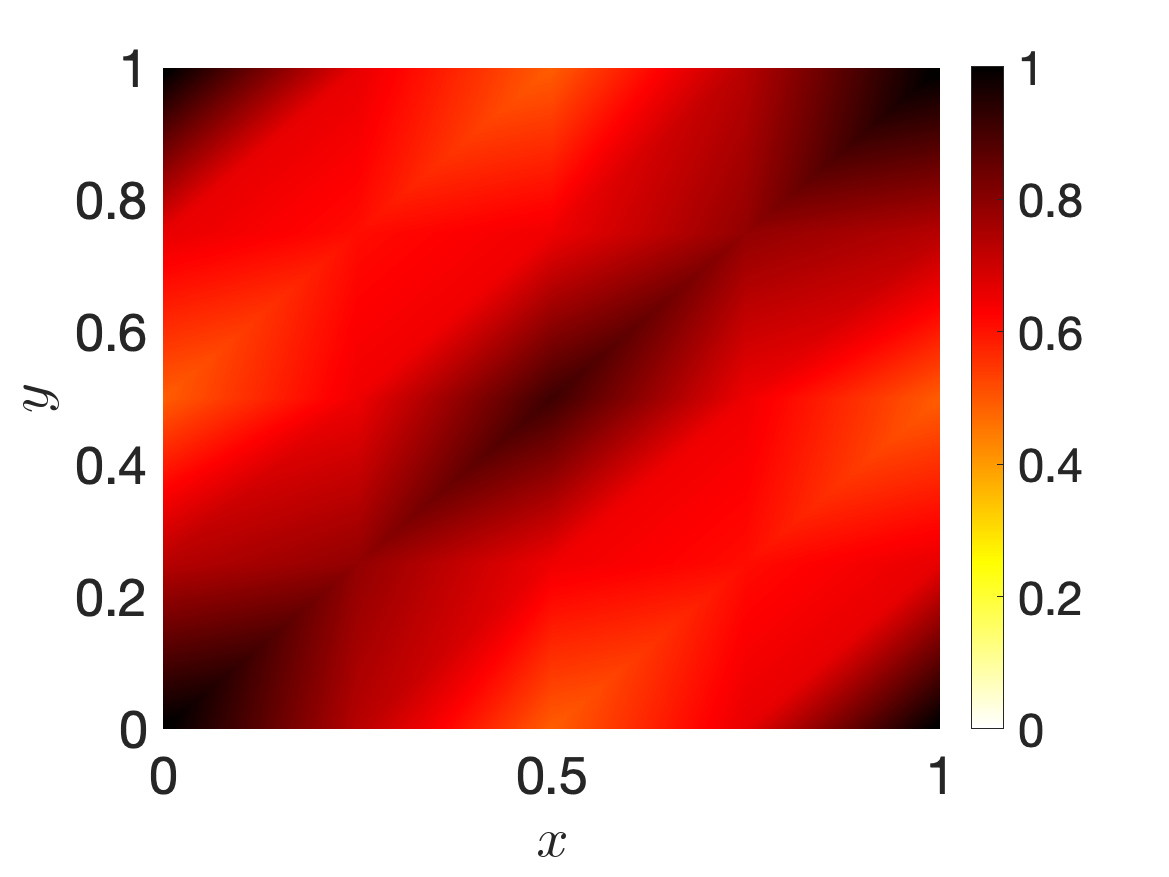}
\end{minipage}
\begin{minipage}{0.24\textwidth}
\includegraphics[scale=0.21]{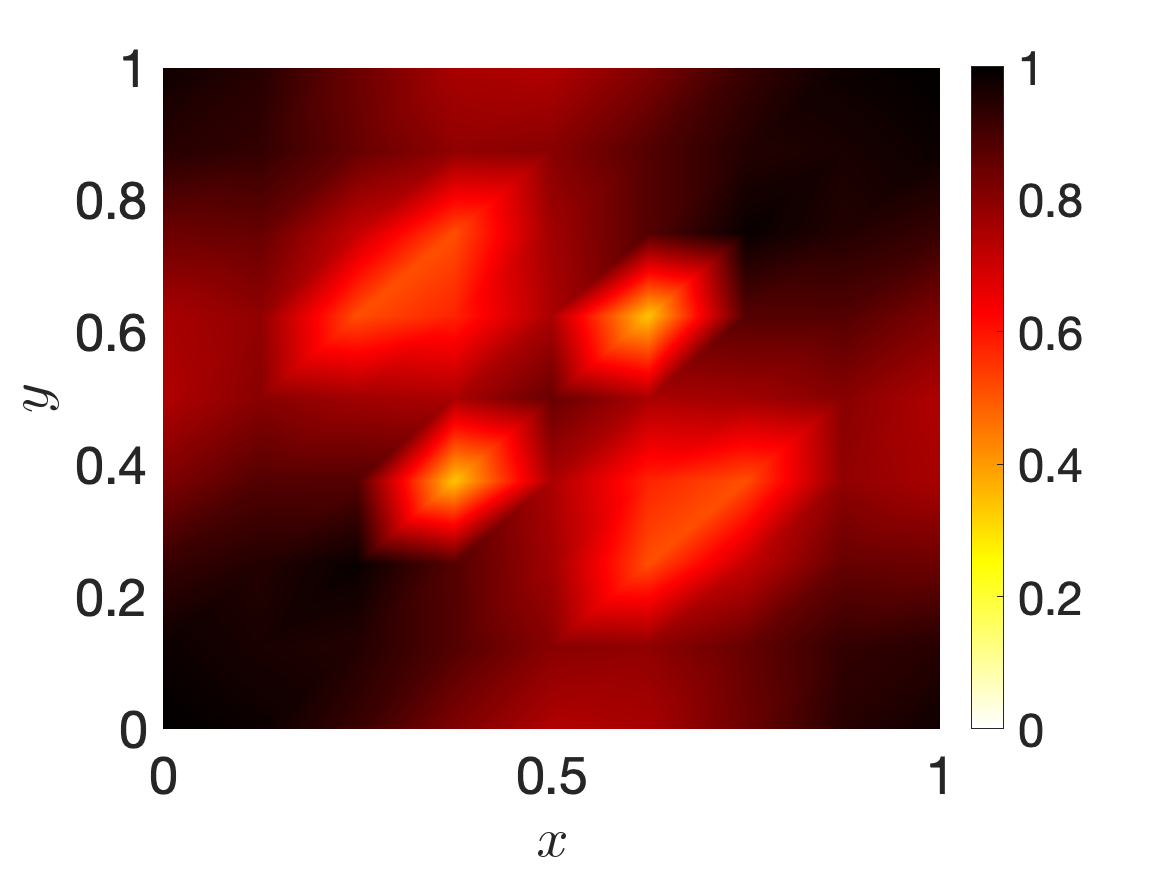}
\end{minipage}
\begin{minipage}{0.24\textwidth}
\includegraphics[scale=0.21]{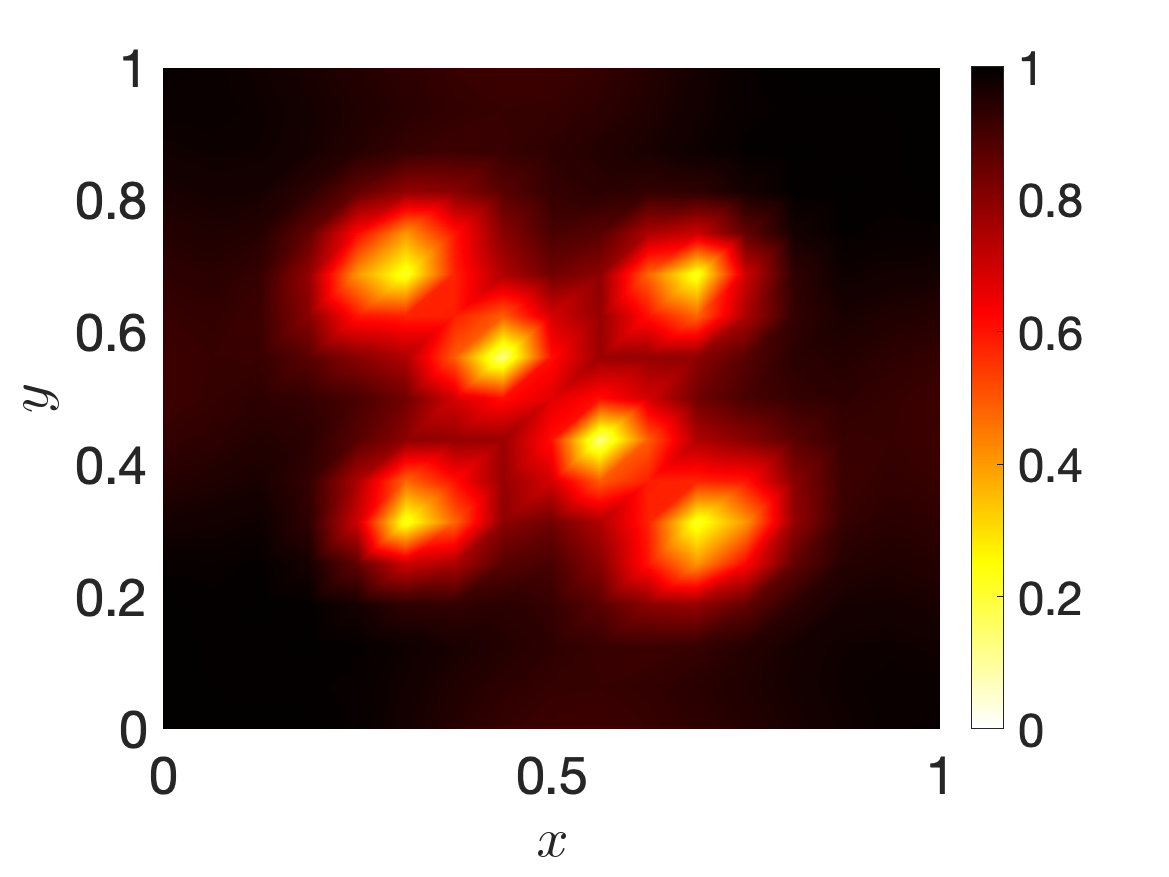}
\end{minipage}
\begin{minipage}{0.24\textwidth}
\includegraphics[scale=0.21]{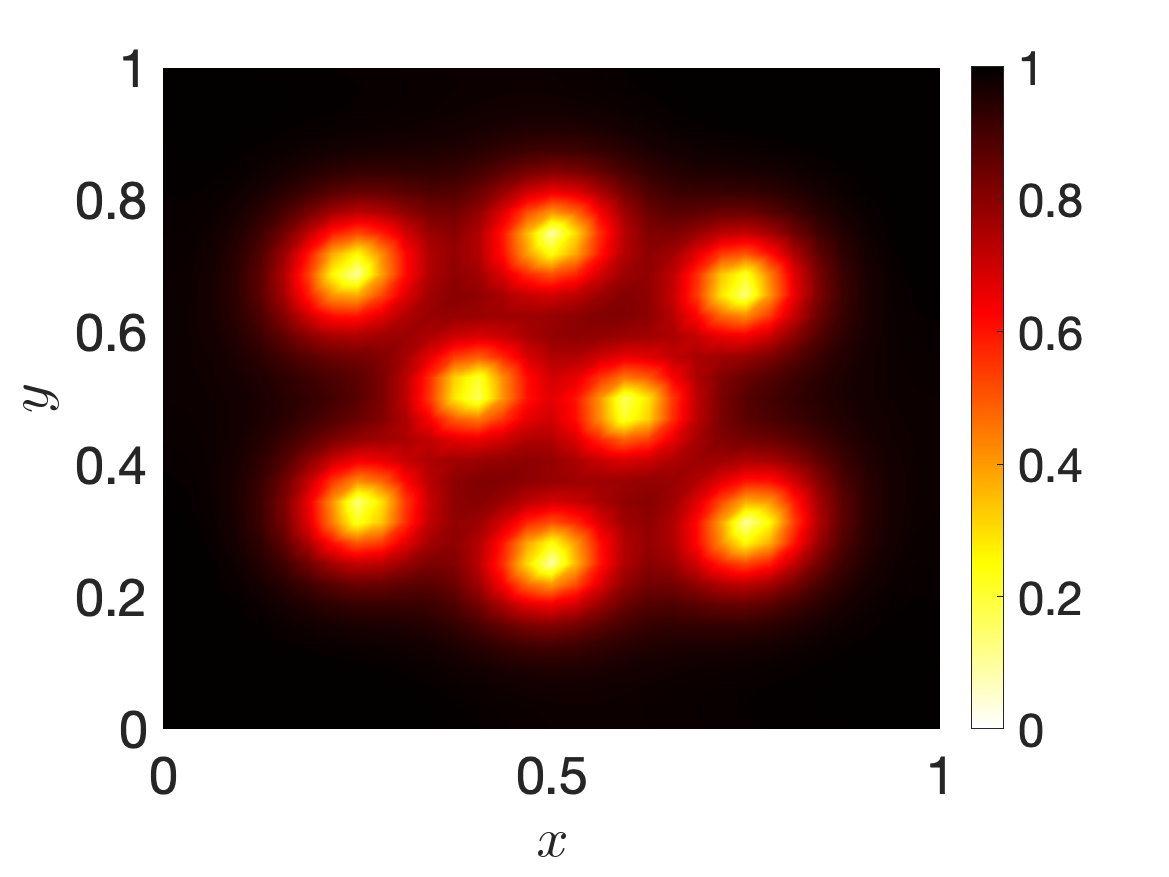}
\end{minipage}
\begin{minipage}{0.24\textwidth}
\includegraphics[scale=0.21]{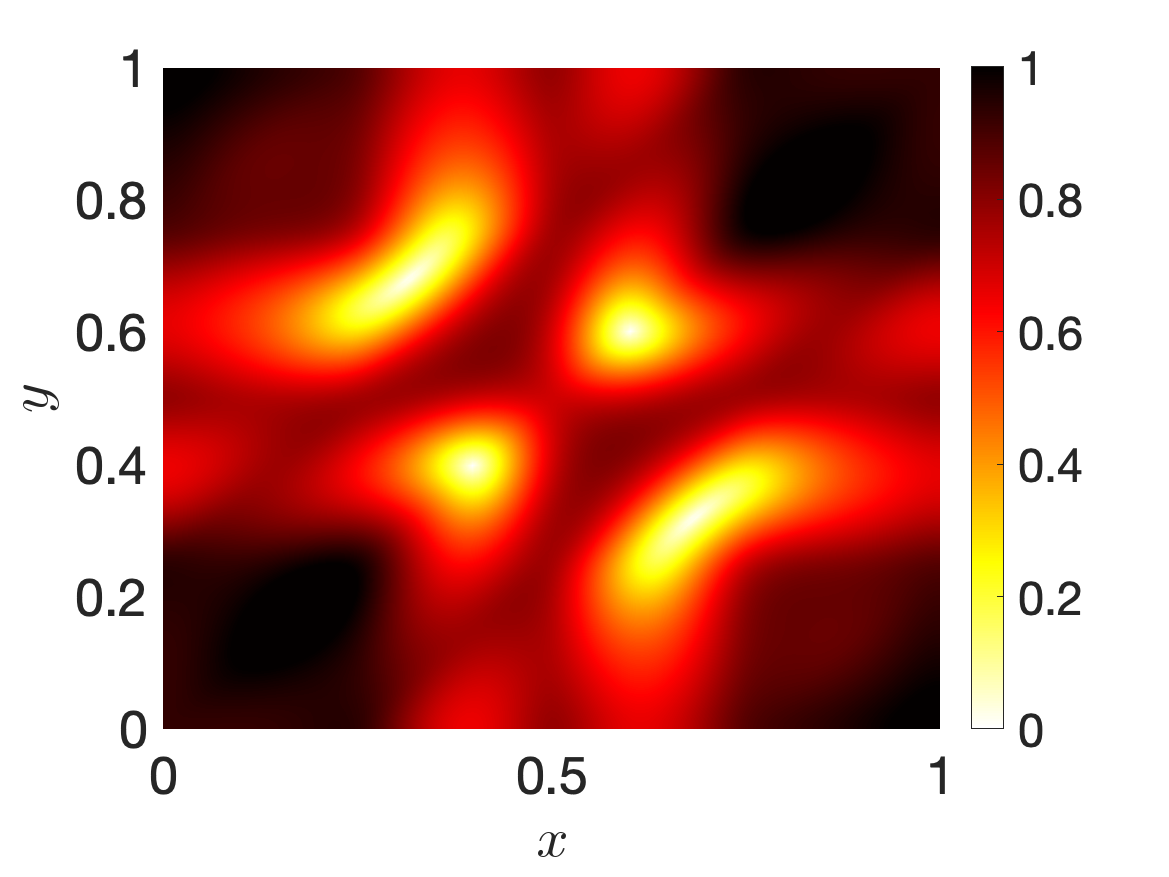}
\end{minipage}
\begin{minipage}{0.24\textwidth}
\includegraphics[scale=0.21]{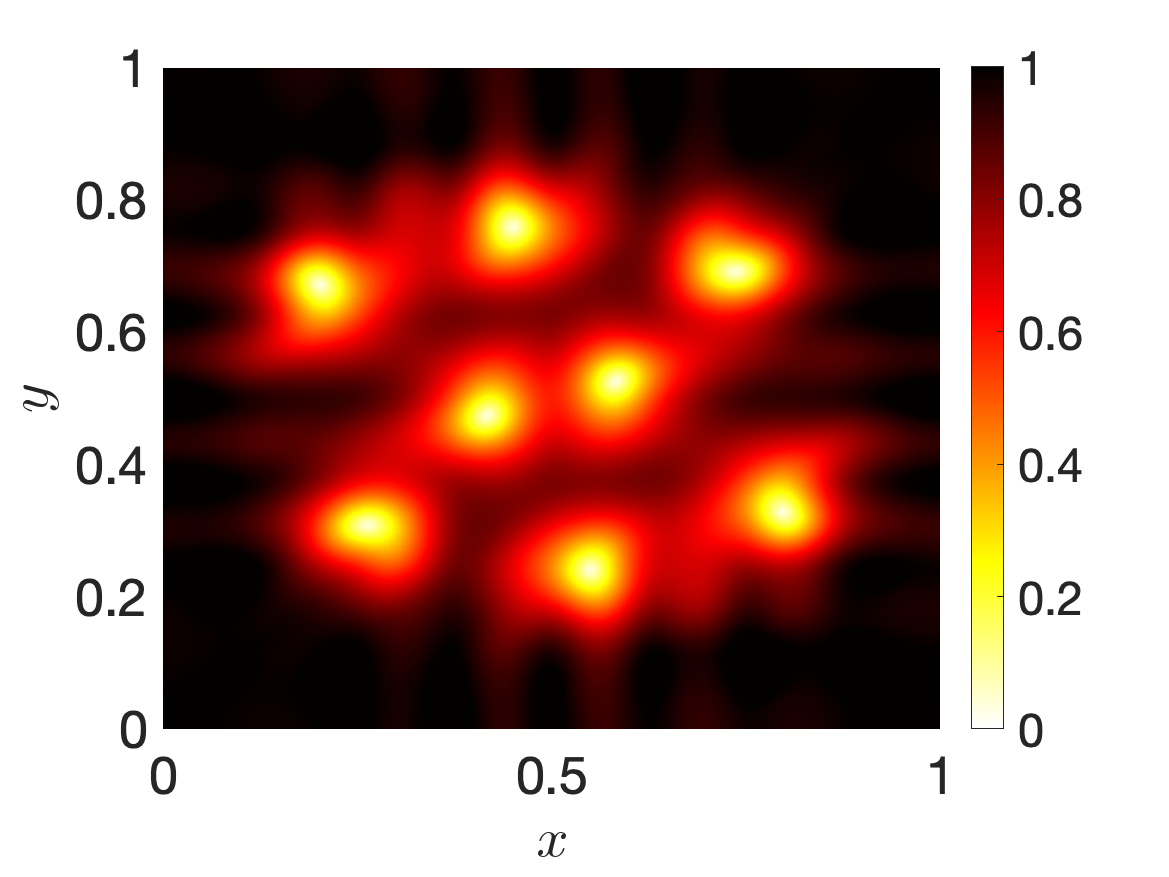}
\end{minipage}
\begin{minipage}{0.24\textwidth}
\includegraphics[scale=0.21]{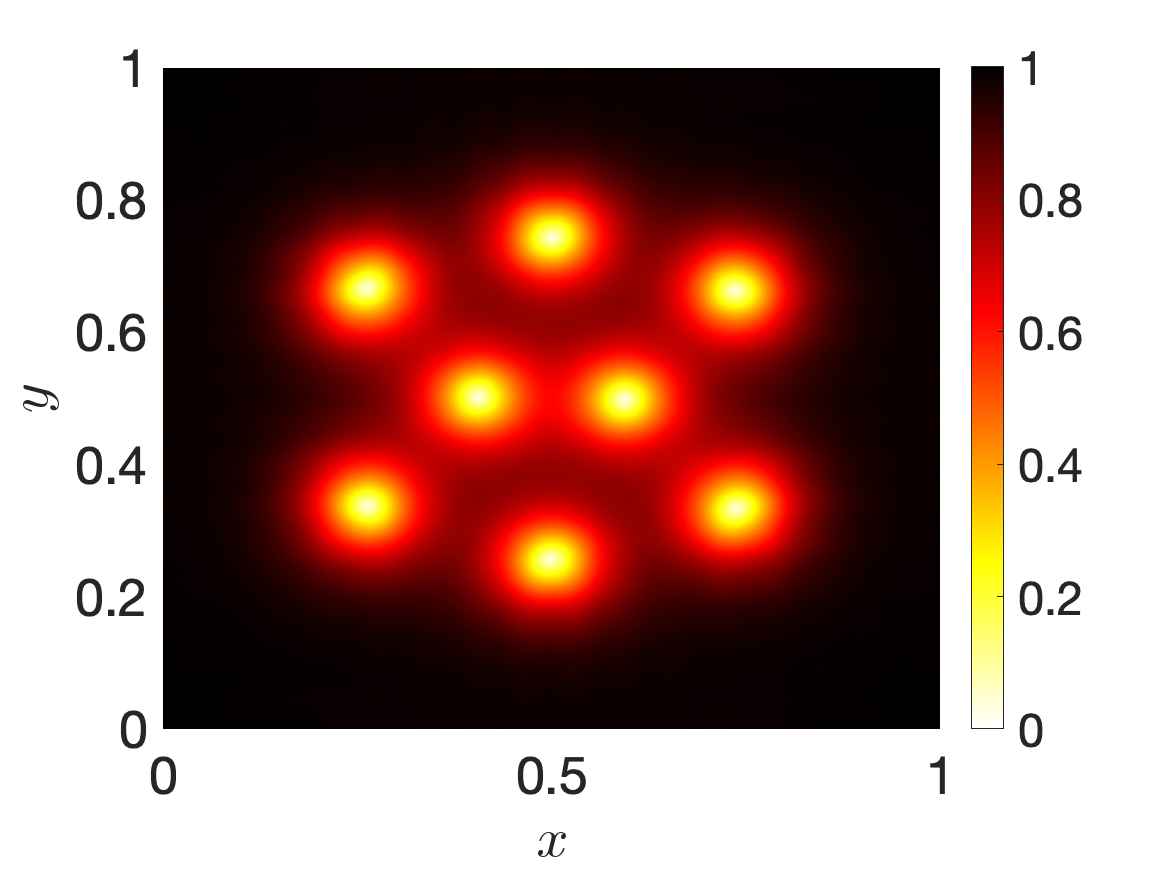}
\end{minipage}
\begin{minipage}{0.24\textwidth}
\includegraphics[scale=0.21]{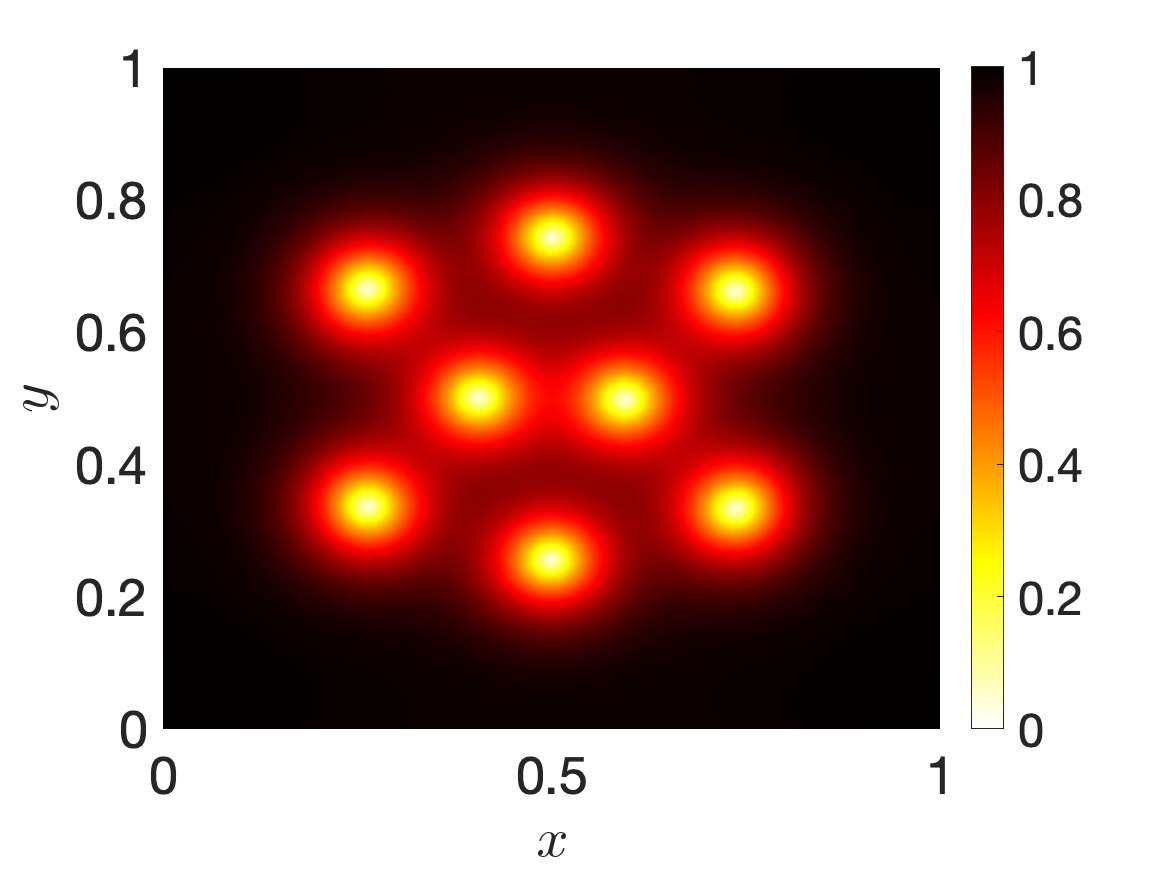}
\end{minipage}
\begin{minipage}{0.24\textwidth}
\includegraphics[scale=0.21]{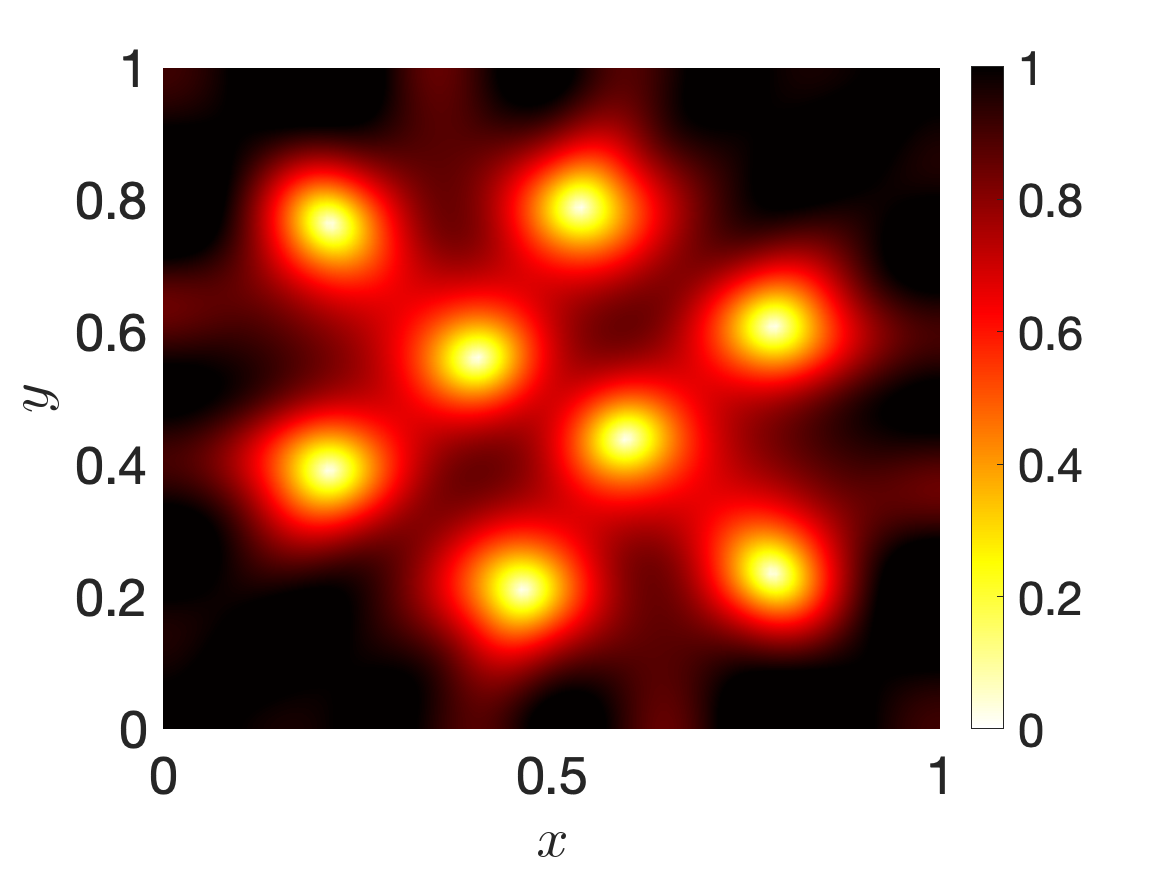}
\end{minipage}
\begin{minipage}{0.24\textwidth}
\includegraphics[scale=0.21]{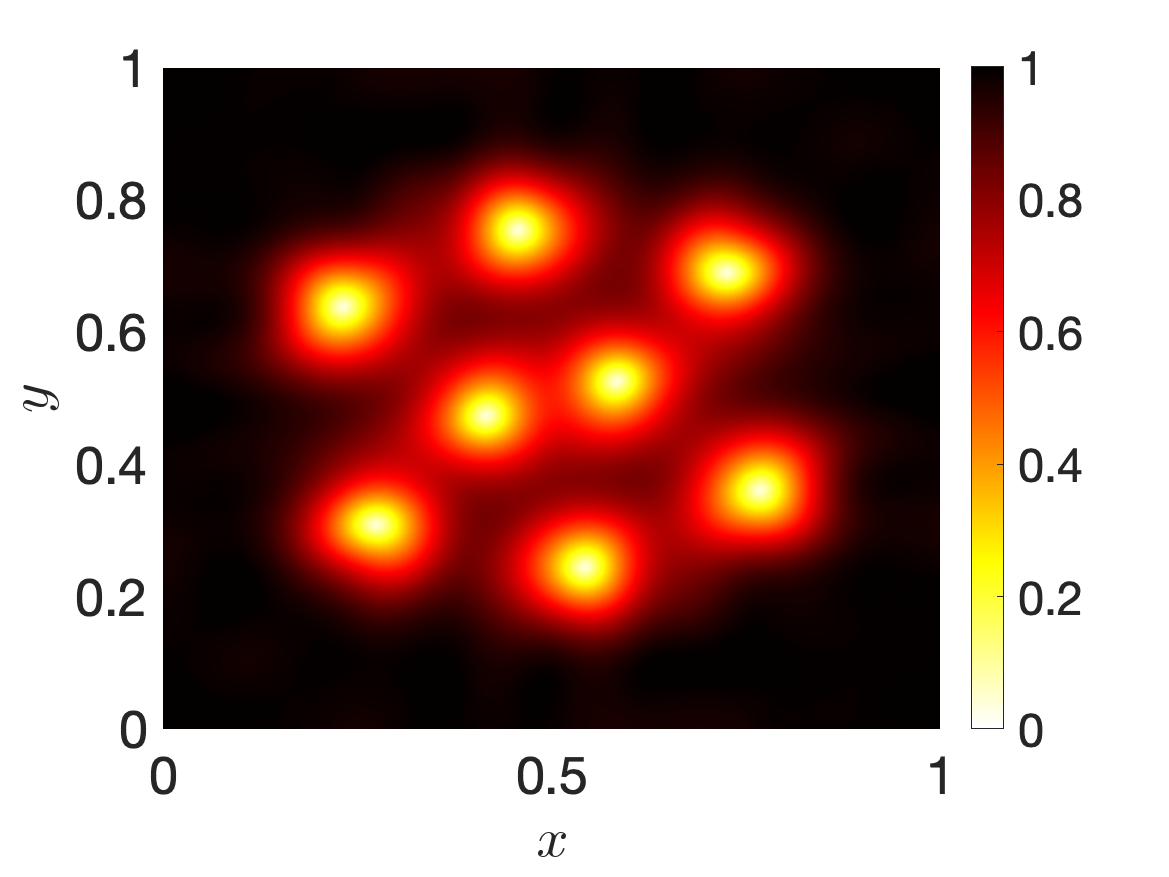}
\end{minipage}
\begin{minipage}{0.24\textwidth}
\includegraphics[scale=0.21]{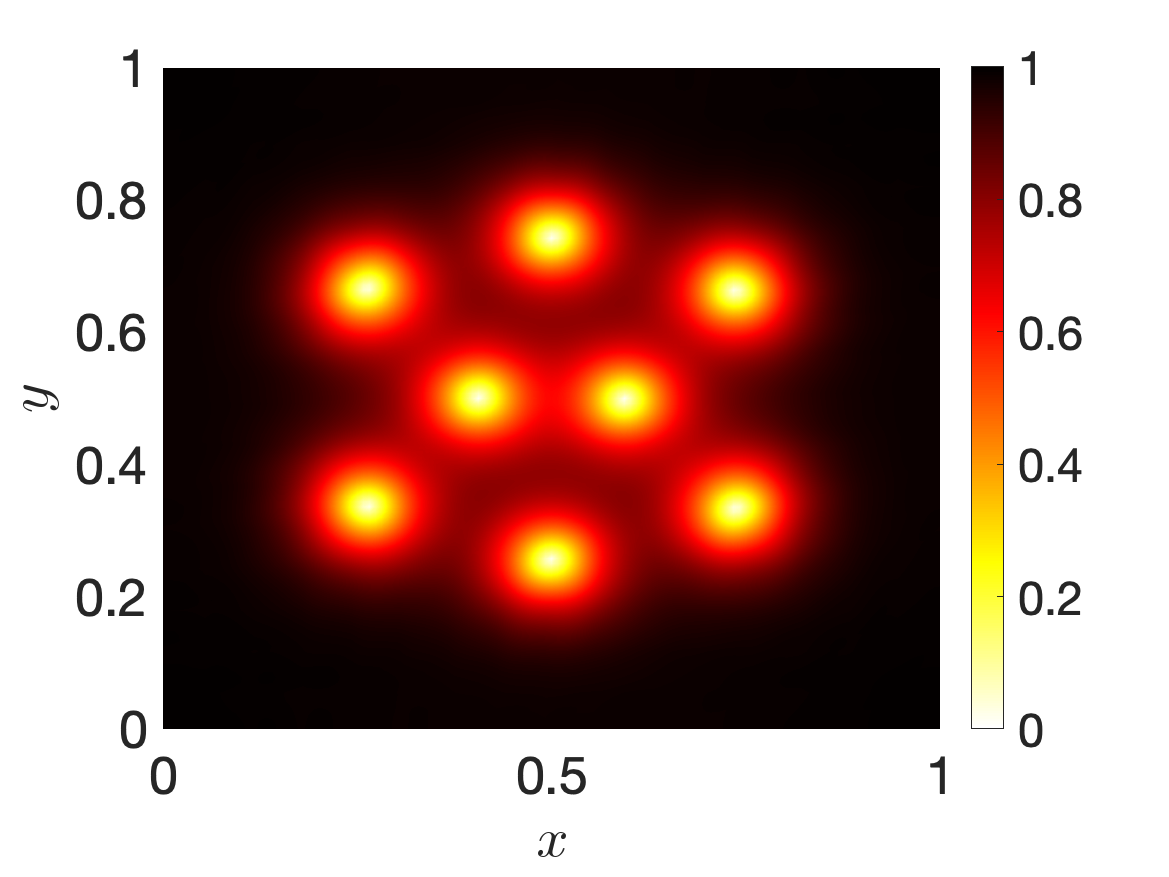}
\end{minipage}
\begin{minipage}{0.24\textwidth}
\includegraphics[scale=0.21]{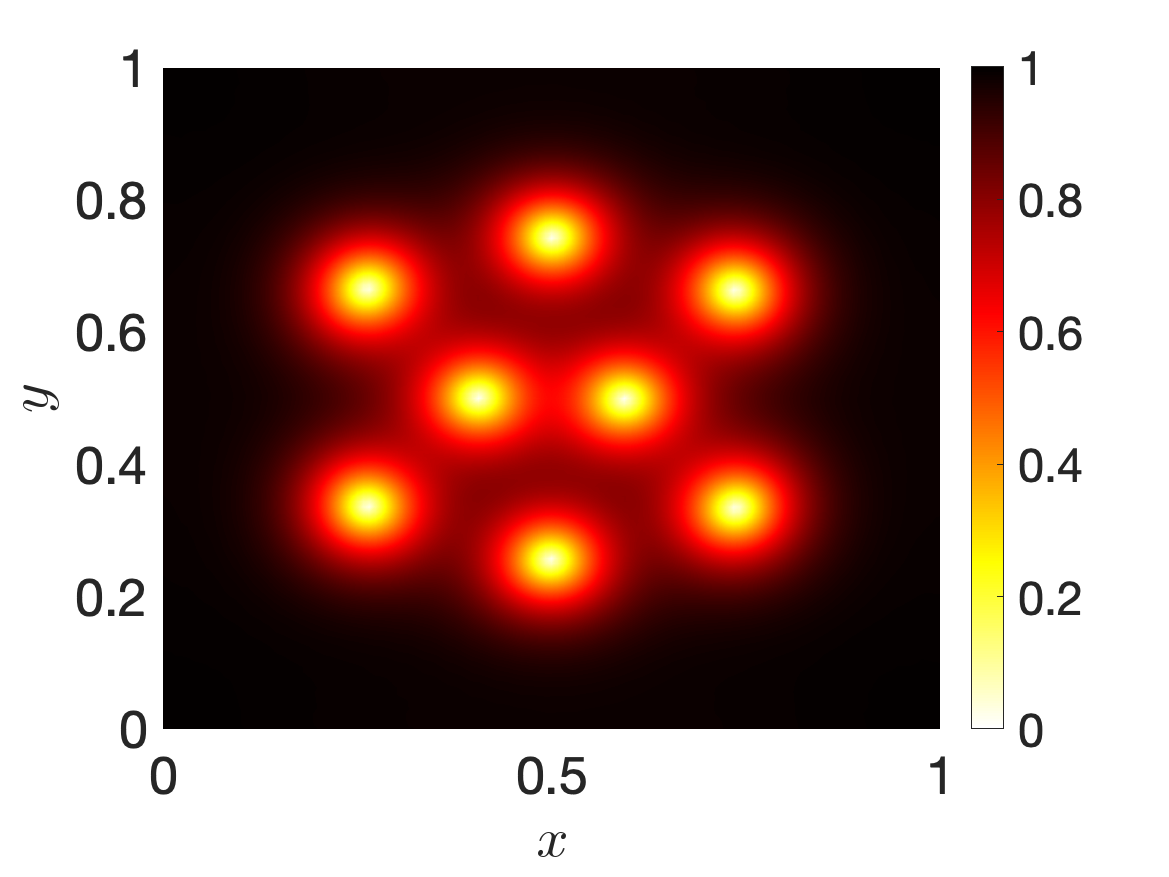}
\end{minipage}
\caption{The figure shows the absolute value $|u|$ for $\kappa = 16$ and different numerical methods. Upper row: Vortices computed with FEM, $h_{\text{\tiny ref}}=2^{-\{2,3,4,5\}}$ from left to right; Middle row: Vortices computed with LOD, $h_{\text{\tiny fine}}=2^{-7}$, $\ell=4$, $\beta=1$, and $h=2^{-\{2,3,4,5\}}$ from left to right; Lower row: Vortices computed with LOD, $h_{\text{\tiny fine}}=2^{-7}$, $\ell=4$, $\beta=0$, and $h=2^{-\{2,3,4,5\}}$ from left to right.}
	\label{fig:vortices}
\end{figure}

Figure \ref{fig:vortices} shows in the upper row the vortex pattern for the FEM approximations, the middle row for the LOD-approximations with $\beta=1$ and in the lower row for the LOD-approximations with $\beta=0$. We can already see a good resolution of the vortex pattern for a mesh size of $h=2^{-3}$ (i.e. 81 DOFs) with the LOD-method which becomes even sharper for the mesh size $h = 2^{-4}$ (i.e. 289 DOFs). In contrast to the LOD approximations, the finite element method requires a mesh size of at least $h_{\text{\tiny ref}}=2^{-5}$ (i.e. 1024 DOFs) so that the correct vortex pattern becomes visible. This emphasizes again the significantly improved resolution of the correct pattern in the LOD-spaces. Even in the case of $h = 2^{-2}$ (i.e. 25 DOFs) the almost correct vortex pattern roughly shows up that is the right number of vortices is captured but at relatively inaccurate positions. However, we see again that the stabilization parameter $\beta=0$ is clearly preferable over $\beta=1$.

Let us finally compare the errors for both methods. We choose exemplary $\kappa=8,\, 12,\, 16, \,20,\,32$ and the errors are shown in Figure \ref{fig:decay_error_and_FEMvsLOD} (right). We see that the LOD method converges faster and exhibits a smaller preasymptotic phase compared to the finite element method. In summary, our experiments show that the LOD method seems to be very suitable to compute minimizers of the Ginzburg-Landau energy and to capture vortex patterns in superconductors.

\def\cprime{$'$}

\appendix

\section{Proofs for the decay of element correctors}
\label{section:appendix}

In the following, we present the proofs to the decay results in Theorem \ref{theorem-decay-ideal-corrector}, Lemma \ref{lemma:local-corrector-estimate} and Theorem \ref{theorem:C-Cell-error}. The proofs in this appendix are essentially following the arguments by Peterseim \cite{Pet17} though various modifications are necessary to fit our setting.

Let us first recall the main statement of Theorem \ref{theorem-decay-ideal-corrector} which guarantees the existence of a constant $0< \theta_{\beta} <1$ such that for each $T \in \mathcal{T}_H$ and $v_h \in V_h$ the corrector $\mathcal{C}_{T}v_h \in W$ from \eqref{definition-CT} fulfills
\begin{eqnarray*}
|||  \mathcal{C}_{T}v_h  |||_{\beta,\Omega \setminus \UN^{\ell}(T)} &\lesssim& \theta_{\beta}^{\ell} \, |||  v_h  |||_{\beta,T}
\qquad
\mbox{where} \quad 
\theta_{\beta} \,\,\le\,\,  \left( 1 - \tfrac{ 1}{1+C (1 + \kappa \beta h)} \right)^{1/12} < 1.
\end{eqnarray*}

\begin{proof}[Proof of Theorem \ref{theorem-decay-ideal-corrector}]
Let $\ell \in \mathbb{N}$ be fixed and let $\eta \in W^{1,\infty}(\Omega)$ be a cutoff function with $0 \le \eta \le 1$ and the properties 
\begin{align}
\label{truncation-function}
\eta = 0 \quad \mbox{in } \UN^{\ell+2}(T); \qquad  \qquad \eta = 1 \quad \mbox{in } \Omega \setminus \UN^{\ell+3}(T) \qquad \mbox{and} \qquad \| \nabla \eta \|_{L^{\infty}(\Omega)} \le \tfrac{1}{h} C_{\eta} 
\end{align}
for some generic constant $C_{\eta}>0$. For brevity, let $w_{T}:=\mathcal{C}_{T}v_h$. With $w_{T}\in W$ we note that
\begin{align}
\label{proofdecay1-wT-identiy}
\mbox{on } \Omega \setminus \UN^{\ell+3}(T): \qquad w_T = \eta w _T = \eta w_T - (P_h^{-1} \circ P_h)(\eta w_T) + (P_h^{-1} \circ P_h)(\eta w_T),
\end{align}
as well as
\begin{align*}
\mbox{on } \Omega \setminus \UN^{\ell+4}(T): \hspace{5pt} 0 = P_h(w_T) = P_h(\eta w_T)
\qquad
\mbox{and}
\qquad \mbox{on } \UN^{\ell+1}(T): \hspace{5pt} 0 = \eta w_T \overset{\eqref{supportgrowth-Ph}}{= }P_h(\eta w_T).
\end{align*}
and consequently with \eqref{growth-Phinv}
\begin{align}
\label{proofdecay1-Pinvsupport}
\mbox{supp}\hspace{1pt}(P_h^{-1} \circ P_h)(\eta w_T) \subset \UN^{\ell+5}(T) \setminus \UN^{\ell}(T).
\end{align}
We obtain
\begin{align*}
& |||  w_T  |||_{\beta,\Omega \setminus \UN^{\ell+3}(T)}^2 \overset{\eqref{proofdecay1-wT-identiy}}{=} \underbrace{ a_{\beta,\Omega}( w_T ,  \eta w_T - (P_h^{-1} \circ P_h)(\eta w_T) ) }_{=:\mbox{I}} \\
& \qquad -  \underbrace{ a_{\beta, \Omega \setminus \UN^{\ell+3}(T) }( w_T ,  (P_h^{-1} \circ P_h)(\eta w_T)  ) }_{=:\mbox{II}} - \underbrace{ a_{\beta,\UN^{\ell+3}(T)} ( w_T , \eta w_T - (P_h^{-1} \circ P_h)(\eta w_T) ) }_{=:\mbox{III}}. 
\end{align*}
We now estimate the terms individually.\\[0.2em]
{\it Estimate of $\mbox{I}$:} Since $\eta w_T - (P_h^{-1} \circ P_h)(\eta w_T) \in W$ with no support on the element $T$, we obtain with the definition of $w_{T}=\mathcal{C}_{T}v_h$ (cf. \eqref{definition-CT}) that
\begin{align*}
\mbox{I} = a_{\beta,\Omega} ( w_T , \eta w_T - (P_h^{-1} \circ P_h)(\eta w_T) ) = a_{\beta,T} ( v_h , \eta w_T - (P_h^{-1} \circ P_h)(\eta w_T) ) = 0.
\end{align*}
{\it Estimate of $\mbox{II}$:} Exploiting \eqref{proofdecay1-Pinvsupport}, we obtain
\begin{align*}
\left| \mbox{II} \right| &= | a_{\beta, \UN^{\ell+5}(T) \setminus \UN^{\ell+3}(T) }( w_T ,  (P_h^{-1} \circ P_h)(\eta w_T)  ) | \\
&\le
||| w_T |||_{\beta, \UN^{\ell+5}(T) \setminus \UN^{\ell+3}(T) } \, \, |||  (P_h^{-1} \circ P_h)(\eta w_T) |||_{\beta, \UN^{\ell+5}(T) \setminus \UN^{\ell+3}(T) }.
\end{align*}
For the latter term we have with an analogous local version of Lemma \ref{Lemma_2.1} that
\begin{eqnarray*}
\lefteqn{ |||  (P_h^{-1} \circ P_h)(\eta w_T) |||_{\beta, \UN^{\ell+5}(T) \setminus \UN^{\ell+3}(T) } } \\ 
&\lesssim& \kappa^{-1} \Vert \nabla (P_h^{-1} \circ P_h)(\eta w_T) \Vert_{L^2(\UN^{\ell+5}(T) \setminus \UN^{\ell+3}(T))} + \beta \Vert (P_h^{-1} \circ P_h)(\eta w_T) \Vert_{L^2(\UN^{\ell+5}(T) \setminus \UN^{\ell+3}(T))},
\end{eqnarray*}
where we further estimate using \eqref{supportgrowth-Ph}, \eqref{growth-Phinv}, \eqref{truncation-function} and since $P_h w_T = 0$
\begin{eqnarray*}
\lefteqn{ \Vert \nabla (P_h^{-1} \circ P_h)(\eta w_T) \Vert_{L^2(\UN^{\ell+5}(T) \setminus \UN^{\ell+3}(T))} 
\,\,\,\lesssim\,\,\, \Vert \nabla P_h(\eta w_T) \Vert_{L^2(\UN^{\ell+6}(T) \setminus \UN^{\ell+2}(T))} } \\
&=& \Vert \nabla P_h(\eta w_T) \Vert_{L^2(\UN^{\ell+4}(T) \setminus \UN^{\ell+2}(T))} \,\,\, \lesssim \,\,\, \Vert \nabla (\eta w_T) \Vert_{L^2(\UN^{\ell+5}(T) \setminus \UN^{\ell+2}(T))} \\
&\lesssim& \Vert \nabla w_T \Vert_{L^2(\UN^{\ell+5}(T) \setminus \UN^{\ell+2}(T))} + \| \nabla \eta \|_{L^{\infty}(\Omega)}  \Vert w_T \Vert_{L^2(\UN^{\ell+3}(T) \setminus \UN^{\ell+2}(T))} \\
&\lesssim& \Vert \nabla w_T \Vert_{L^2(\UN^{\ell+5}(T) \setminus \UN^{\ell+2}(T))} + h^{-1}  \Vert w_T - P_h w_T\Vert_{L^2(\UN^{\ell+3}(T) \setminus \UN^{\ell+2}(T))} \\
&\lesssim& \Vert \nabla w_T \Vert_{L^2(\UN^{\ell+5}(T) \setminus \UN^{\ell+1}(T))}
\end{eqnarray*}
and similarly 
\begin{eqnarray*}
\lefteqn{ \Vert (P_h^{-1} \circ P_h)(\eta w_T) \Vert_{L^2(\UN^{\ell+5}(T) \setminus \UN^{\ell+3}(T)) } }\\ 
&\le& \Vert w_T \Vert_{L^2(\UN^{\ell+5}(T) \setminus \UN^{\ell+3}(T))} +  \Vert \eta w_T - (P_h^{-1} \circ P_h)(\eta w_T) \Vert_{L^2(\UN^{\ell+5}(T) \setminus \UN^{\ell+3}(T))} \\
&\lesssim& \Vert w_T \Vert_{L^2(\UN^{\ell+5}(T) \setminus \UN^{\ell+3}(T))} +  h \Vert \nabla (\eta w_T - (P_h^{-1} \circ P_h)(\eta w_T)) \Vert_{L^2(\UN^{\ell+6}(T) \setminus \UN^{\ell+2}(T))} \\
%&\overset{h\lesssim \kappa^{-1}}{\lesssim}& 
&\lesssim& 
\Vert w_T \Vert_{L^2(\UN^{\ell+5}(T) \setminus \UN^{\ell+3}(T))}  + h \Vert \nabla w_T \Vert_{L^2(\UN^{\ell+6}(T) \setminus \UN^{\ell+1}(T))} \\
&\lesssim&  h \Vert \nabla w_T \Vert_{L^2(\UN^{\ell+6}(T) \setminus \UN^{\ell+1}(T))}.
\end{eqnarray*}
Combining the estimates yields
\begin{align*}
\left| \mbox{II} \right| &\lesssim 
||| w_T |||_{\beta, \UN^{\ell+5}(T) \setminus \UN^{\ell+3}(T) } \, \,(1 + \kappa \beta h) \kappa^{-1} \Vert \nabla w_T \Vert_{L^2(\UN^{\ell+6}(T) \setminus \UN^{\ell}(T))} \\
&\lesssim (1 + \kappa \beta h) |||  w_T  |||_{\beta,\UN^{\ell+6}(T) \setminus \UN^{\ell+1}(T)}^2.
\end{align*}
{\it Estimate of $\mbox{III}$:} Since $\mbox{supp}\hspace{1pt}(P_h^{-1} \circ P_h)(\eta w_T) \subset \UN^{\ell+5}(T) \setminus \UN^{\ell}(T)$ we have
\begin{eqnarray*}
|\mbox{III}| &=& |a_{\beta,\UN^{\ell+3}(T)} ( w_T , \eta w_T - (P_h^{-1} \circ P_h)(\eta w_T) )| \\
&=&  |a_{\beta,\UN^{\ell+3}(T) \setminus \UN^{\ell}(T)} ( w_T , \eta w_T - (P_h^{-1} \circ P_h)(\eta w_T) )| \\
&\lesssim& ||| w_T   |||_{\beta, \UN^{\ell+3}(T) \setminus \UN^{\ell}(T) } \, \left( |||\eta w_T |||_{\beta, \UN^{\ell+3}(T) \setminus \UN^{\ell}(T) } + ||| (P_h^{-1} \circ P_h)(\eta w_T)  |||_{\beta, \UN^{\ell+3}(T) \setminus \UN^{\ell}(T) } \right) \\
&\lesssim&  ||| w_T   |||_{\beta, \UN^{\ell+3}(T) \setminus \UN^{\ell}(T) } \, \left(  ||| (P_h^{-1} \circ P_h)(\eta w_T)  |||_{\beta, \UN^{\ell+3}(T) \setminus \UN^{\ell}(T) } \right.\\
&\enspace& \qquad \left. +   \kappa^{-1} \|  \nabla  w_T \|_{L^2( \UN^{\ell+4}(T) \setminus \UN^{\ell+1}(T))} + \beta \| w_T \|_{L^2( \UN^{\ell+3}(T) \setminus \UN^{\ell+2}(T))}  \right).
\end{eqnarray*}
Here we obtain analogously to the estimates for $\mbox{II}$
\begin{eqnarray*}
\lefteqn{ |||  (P_h^{-1} \circ P_h)(\eta w_T)  |||_{\beta, \UN^{\ell+3}(T) \setminus \UN^{\ell}(T) } } \\ 
&\lesssim&  \kappa^{-1} \|  \nabla (P_h^{-1} \circ P_h)(\eta w_T) \|_{L^2( \UN^{\ell+3}(T) \setminus \UN^{\ell}(T))} + \beta \|  (P_h^{-1} \circ P_h)(\eta w_T)  \|_{L^2( \UN^{\ell+3}(T) \setminus \UN^{\ell}(T))} \\
&\lesssim& (1 + \kappa \beta h) |||  w_T  |||_{\beta,\UN^{\ell+5}(T) \setminus \UN^{\ell}(T)}
\end{eqnarray*}
and in conclusion
\begin{eqnarray*}
|\mbox{III}|  &\lesssim& (1 + \kappa \beta h) |||  w_T  |||_{\beta,\UN^{\ell+5}(T) \setminus \UN^{\ell}(T)}^2.
\end{eqnarray*}
Combing the estimates for I, II and III yields for some constant $C>0$:
\begin{eqnarray*}
|||  w_T  |||_{\beta,\Omega \setminus \UN^{\ell+6}(T)}^2 &\le& |||  w_T  |||_{\beta,\Omega \setminus \UN^{\ell+3}(T)}^2 
 \,\,\, \le \,\,\, C (1 + \kappa \beta h) |||  w_T  |||_{\beta,\UN^{\ell+6}(T) \setminus \UN^{\ell}(T)}^2 \\
 &=& C (1 + \kappa \beta h) \left( |||  w_T  |||_{\beta,\Omega \setminus \UN^{\ell}(T) }^2 - |||  w_T  |||_{\beta, \Omega \setminus \UN^{\ell+6}(T)}^2 \right).
\end{eqnarray*}
Hence
\begin{eqnarray*}
|||  w_T  |||_{\beta,\Omega \setminus \UN^{\ell+6}(T)}^2 &\le&
 \frac{ C (1 + \kappa \beta h) }{1+C (1 + \kappa \beta h)} |||  w_T  |||_{\beta,\Omega \setminus \UN^{\ell}(T)}^2
\end{eqnarray*}
and recursively
\begin{eqnarray*}
|||  w_T  |||_{\beta,\Omega \setminus \UN^{\ell+6}(T)}^2 &\le&
 \left( \frac{ C (1 + \kappa \beta h) }{1+C (1 + \kappa \beta h)} \right)^{\lfloor \tfrac{\ell}{6} \rfloor } |||  w_T  |||_{\beta,\Omega}^2 \\
&\lesssim&  \left( 1 - \frac{ 1}{1+C (1 + \kappa \beta h)} \right)^{\lfloor \tfrac{\ell}{6} \rfloor } |||  v_h  |||_{\beta,T}^2.
\end{eqnarray*}
\end{proof}
With this, we can estimate the error between an ideal element corrector $\mathcal{C}_{T}v_h \in W$ and its truncated approximation $\mathcal{C}_{T,\ell} v_h \in W(\,\UN^{\ell}(T)\,)$ which fulfills
$a_{\beta} ( \mathcal{C}_{T,\ell} v_h , w ) = a_{\beta,T} ( v_h , w )$
for all $w\in W(\,\UN^{\ell}(T)\,)$.  Here we recall the statement of Lemma \ref{lemma:local-corrector-estimate}, which guaranteed that if $h \lesssim \kappa^{-1}$, then it holds
\begin{eqnarray*}
|||  \mathcal{C}_{T}v_h - \mathcal{C}_{T,\ell} v_h  |||_{\beta} &\lesssim& \theta_{\beta}^{\ell} \, (1+ \kappa \beta h)  \, |||  v_h  |||_{\beta,T},
\end{eqnarray*}
with $0< \theta_{\beta} <1$ as before.
\begin{proof}[Proof of Lemma \ref{lemma:local-corrector-estimate}]
Due to the Galerkin orthogonality
\begin{align*}
\abeta ( \mathcal{C}_{T} v_h - \mathcal{C}_{T,\ell+5} v_h , w ) = 0  \qquad \mbox{for all } w\in W(\,\UN^{\ell+5}(T)\,)
\end{align*}
we have
\begin{align}
\label{bestapprox-error-Ctell}
||| \mathcal{C}_{T} v_h - \mathcal{C}_{T,\ell+5} v_h |||_{\beta} \lesssim \inf_{ w\in W(\,\UN^{\ell+5}(T)\,) } ||| \mathcal{C}_{T} v_h - w |||_{\beta}.
\end{align}
Let again $w_T:=\mathcal{C}_{T} v_h$ and the cutoff function $\eta \in W^{1,\infty}(\Omega)$ as in \eqref{truncation-function}. In order to estimate the best-approximation error in \eqref{bestapprox-error-Ctell}, we exploit \eqref{proofdecay1-Pinvsupport} and consider the function
\begin{align*}
(1-\eta) w_T + (P_h^{-1} \circ P_h)( \eta w_T)
%= (1-\eta) w_T - (P_h^{-1} \circ P_h)((1-\eta) w_T) 
\in W(\,\UN^{\ell+5}(T)\,).
\end{align*}
We obtain 
\begin{eqnarray*}
\lefteqn{ |||  w_T - (1-\eta) w_T - (P_h^{-1} \circ P_h)(\eta w_T) |||_{\beta} \,\,\, = \,\,\, ||| \eta w_T - (P_h^{-1} \circ P_h)(\eta w_T) |||_{\beta, \UN^{\ell+5}(T)} } \\
&\overset{\eqref{proofdecay1-Pinvsupport}}{\lesssim}&  ||| \eta w_T |||_{\beta, \Omega \setminus \UN^{\ell+2}(T)} + ||| (P_h^{-1} \circ P_h)(\eta w_T) |||_{\beta,\UN^{\ell+5}(T) \setminus \UN^{\ell}(T)} \\
&\lesssim&  ||| w_T |||_{\beta, \Omega \setminus \UN^{\ell+1}(T)} + ||| (P_h^{-1} \circ P_h)(\eta w_T) |||_{\beta,\UN^{\ell+5}(T) \setminus \UN^{\ell}(T)} \\
&\lesssim& ||| w_T |||_{\beta, \Omega \setminus \UN^{\ell+1}(T)} + (1+ \kappa \beta h)  |||  w_T |||_{\beta,\UN^{\ell+6}(T) \setminus \UN^{\ell}(T)} \\
&\lesssim& (1+ \kappa \beta h)  ||| w_T |||_{\beta, \Omega \setminus \UN^{\ell}(T)}
\,\,\,
\overset{\eqref{first-decay-estimate}}{\lesssim} (1+ \kappa \beta h)  \theta_{\beta}^{\ell} \, |||  v_h  |||_{\beta,T}.
\,\,\,
\end{eqnarray*}
where we used $||| (P_h^{-1} \circ P_h)(\eta w_T) |||_{\beta,\UN^{\ell+5}(T) \setminus \UN^{\ell}(T)} \lesssim (1+ \kappa \beta h)  |||  w_T |||_{\beta,\UN^{\ell+6}(T) \setminus \UN^{\ell}(T)}$ which follows as in the proof of Lemma \ref{theorem-decay-ideal-corrector}.
The previous estimate with \eqref{bestapprox-error-Ctell} finishes the proof.
\end{proof} 
We can now combine the local decay results to a global approximation result for the localized corrector $\mathcal{C}_{\ell} v_h := \sum\limits_{T\in \mathcal{T}_h} \mathcal{C}_{T,\ell} v_h$. Here we recall the main result of Theorem \ref{theorem:C-Cell-error} which stated that
\begin{eqnarray*}
||| (\mathcal{C}-\mathcal{C}_{\ell}) v_h |||_{\beta} &\lesssim&  \theta_{\beta}^{\ell} \, (1+ \kappa \beta h)  |||  v_h  |||_{\beta}.
\end{eqnarray*}

\begin{proof}[Proof of Theorem \ref{theorem:C-Cell-error}]
The error can be represented as
\begin{align}
\label{proof-error-splitting-C}
||| (\mathcal{C}-\mathcal{C}_{\ell}) v_h |||_{\beta}^2
&= \sum_{T\in \mathcal{T}_h} \abeta( (\mathcal{C}-\mathcal{C}_{\ell}) v_h , (\mathcal{C}_{T}-\mathcal{C}_{T,\ell}) v_h )
=  \sum_{T\in \mathcal{T}_h} \abeta(e_h ,e_{h,T} ) ,
\end{align}
where we introduced the short notation $e_h:=(\mathcal{C}-\mathcal{C}_{\ell}) v_h$ and $e_{h,T}:=(\mathcal{C}_{T}-\mathcal{C}_{T,\ell}) v_h$. Let $T\in \mathcal{T}_h$ be fixed and let $\eta \in W^{1,\infty}(\Omega)$ denote again the cutoff function from \eqref{truncation-function}. 
Accordingly, we consider the test function $\eta e_{h} - (P_h^{-1} \circ P_h)(\eta e_{h})  \in W$ for which we easily verify that $\mbox{supp}\left( \eta e_{h} - (P_h^{-1} \circ P_h)(\eta e_{h}) \right) \subset \Omega \setminus \UN^{\ell}(T)$. Hence, with $\mathcal{C}_{T,\ell} v_h \in W( \UN^{\ell}(T) )$, we obtain
\begin{align*}
 \abeta( e_{h,T} , \eta e_{h} - (P_h^{-1} \circ P_h)(\eta e_{h}) ) = -  \abeta( \mathcal{C}_{T,\ell} v_h , \eta e_{h} - (P_h^{-1} \circ P_h)(\eta e_{h}) ) = 0.
\end{align*}
This yields
\begin{eqnarray*}
\abeta(e_{h,T}, e_h  ) &=&  \abeta( e_{h,T} , (1-\eta) e_{h}) +  \abeta( e_{h,T} , (P_h^{-1} \circ P_h)(\eta e_{h}) ) \\
&\overset{\eqref{proofdecay1-Pinvsupport}}{\lesssim}& ||| e_{h,T} |||_{\beta} \left(  ||| e_{h} |||_{\beta,\UN^{\ell+4}(T)}  +  ||| (P_h^{-1} \circ P_h)(\eta e_{h}) |||_{\beta, \UN^{\ell+5}(T) \setminus  \UN^{\ell}(T)}  \right) \\
& \lesssim& ||| e_{h,T} |||_{\beta}  \,\, ||| e_{h} |||_{\beta, \UN^{\ell+6}(T)}
\,\,\, \overset{\eqref{local-corrector-estimate}}{\lesssim}\,\,\, \theta_{\beta}^{\ell} \, (1+ \kappa \beta h)  \, |||  v_h  |||_{\beta,T} \,  ||| e_{h} |||_{\beta , \UN^{\ell+6}(T)}.
\end{eqnarray*}
Combining the above estimate with \eqref{proof-error-splitting-C} we obtain
\begin{eqnarray*}
||| (\mathcal{C}-\mathcal{C}_{\ell}) v_h |||_{\beta}^2
&\lesssim& \sum_{T\in \mathcal{T}_h} \theta_{\beta}^{\ell} \, (1+ \kappa \beta h)  \, |||  v_h  |||_{\beta,T} \,  ||| (\mathcal{C}-\mathcal{C}_{\ell}) v_h |||_{\beta , \UN^{\ell+6}(T)} \\
&\lesssim& \theta_{\beta}^{\ell} \, (1+ \kappa \beta h)  |||  v_h  |||_{\beta}  \Big( \sum_{T\in \mathcal{T}_h}  \,  ||| (\mathcal{C}-\mathcal{C}_{\ell}) v_h |||_{\beta , \UN^{\ell+6}(T)}^2 \Big)^{1/2} \\
&\lesssim&  \theta_{\beta}^{\ell} \, (1+ \kappa \beta h)  |||  v_h  |||_{\beta}  \,\, ||| (\mathcal{C}-\mathcal{C}_{\ell}) v_h |||_{\beta},
\end{eqnarray*}
where in the last step that we used that element patches have a bounded overlap in the sense of \eqref{bounded-overlap}. Dividing by $ ||| (\mathcal{C}-\mathcal{C}_{\ell}) v_h |||_{\beta}$ finishes the proof of the first estimate. The second estimate in Theorem \ref{theorem:C-Cell-error} follows readily from the coercivity of $\abeta(\cdot,\cdot)$ on $W$ (Lemma \ref{lemma-abeta-coercive-on-W}) and its universal continuity (Lemma \ref{Lemma_2.1}).
\end{proof}

\end{document}